\declaretheorem{theorem}
\declaretheorem[sibling=theorem]{lemma}
\declaretheorem[sibling=theorem]{proposition}
\declaretheorem[sibling=theorem]{definition}
\declaretheorem[sibling=theorem]{conjecture}
\renewcommand{\theenumi}{\alph{enumi}}
\renewcommand{\labelenumi}{(\theenumi)}
\title{Dynamical Gibbs\,--\,non-Gibbs transitions in the
  Curie--Weiss Potts model in the regime $\beta<3$ }
\author{
  Christof K\"ulske\footnote{
    Ruhr-Universit\"at Bochum, Fakult\"at f\"ur Mathematik,
    Universit\"atsstra\ss e 150, 44780 Bochum, Germany.  E-mail:
    \texttt{daniel.meissner-i4k@ruhr-uni-bochum.de},
    \texttt{christof.kuelske@ruhr-uni-bochum.de}
  }
  \and Daniel Mei\ss ner\footnotemark[\value{footnote}]
}
\date{\today}
\begin{document}
\maketitle

\begin{abstract}
  We consider the Curie--Weiss Potts model in zero external field under
  independent symmetric spin-flip dynamics.  We investigate dynamical
  Gibbs\,--\,non-Gibbs transitions for a range of initial inverse
  temperatures $\beta<3$, which covers the phase transition point
  $\beta = 4\log 2$ \parencite{ElWa90}.  We show that finitely
  many types of trajectories of bad empirical measures appear,
  depending on the parameter $\beta$, with a possibility of re-entrance
  into the Gibbsian regime, of which we provide a full description.
\end{abstract}

\paragraph{AMS 2000 subject classification:} 82B20, 82B26, 82C20

\paragraph{Keywords:} Potts model, Curie--Weiss model, mean-field,
phase transitions, dynamical Gibbs\,--\,non-Gibbs transitions,
sequential Gibbs property, large deviations, singularity theory,
butterflies, beak-to-beak, umbilics.

\section{Introduction}

\subsection{Research context}

The past years have seen progress from various directions in the
understanding of Gibbs\,--\,non-Gibbs transitions for trajectories of
measures under time-evolution, and also more general transforms of
measures.  The Gibbs property of a measure describing the state of a
large system in statistical mechanics is related to the continuity of
single-site conditional probabilities, considered as a function of the
configuration in the conditioning.  If a measure becomes non-Gibbsian,
there are internal mechanisms which are responsible for the creation
of such discontinuous dependence. This leads to the study of
\emph{hidden phase transitions}, which was started in the particular
context of renormalization group pathologies in \textcite{EnFeSo93}.

Such studies have been made for a variety of systems in different
geometries, for different types of local degrees of freedom, and under
different transformations.  Let us mention here time-evolved discrete
lattice spins \parencite{EnFeHoRe02,KiKu20}, continuous lattice spins
\parencite{KuRe06,EnKuOpRu10}, time-evolved models of point particles
in Euclidean space \parencite{JaKu17}, and models on trees
\parencite{EnErIaKu12}.  For a discussion of non-Gibbsian behavior of
time-evolved lattice measures in regard to the approach to a (possibly
non-unique) invariant state under dynamics, see \cite{JaKu19}, for
relevance of non-Gibbsianness to the infinite-volume Gibbs variational
principle (and its possible failure) see \cite{KuLeRe04,LaTa20}.
For recent developments for one-dimensional long-range systems, and
the relation between continuity of one-sided (vs. two-sided)
conditional probabilities see
\cite{EnLe17,BeFeVe19,BeStCo18,BiEnEnLe18}.

In the present paper we are aiming to contribute to the understanding
of Gibbs\,--\,non-Gibbs transformations for mean-field models, in the
sense of the sequential Gibbs property
\parencite{KuLe07,ErKu10,FeHoMa13,JaKuRuWe14,HoReZu15,KiKu19,FeHoMa14,HeKrKu19}.
Usually there is a somewhat incomplete
picture for lattice models, due to the difficulty to find sharp
critical parameters.  Mean-field models on the other hand are
often “solvable” in terms of variational principles which arise from
the large deviation formalism, while the remaining model-dependent
task to characterize the minimizers and understand the corresponding
various bifurcations can be quite substantial.  We choose to work for
our problem in the so-called two-layer approach, in which one needs to
understand the parameter dependence of the large-deviation functional
of a conditional first-layer system. In this functional the
conditioning provides an additional parameter given by an empirical
measure on the second layer. This is more direct than working in the
Lagrangian formalism on trajectory space, which would provide
additional insights on the nature of competing histories that explain
the current state of the system at a discontinuity point
\parencite{EnFeHoRe10,ErKu10,ReWa14,KrReZu17}.

Compared to the Curie--Weiss Ising model, the Fuzzy Potts model and the
Widom-Rowlinson models, we find in the present analysis of the
time-evolved Curie--Weiss Potts model significantly more complex
transition phenomena, see Theorem~\ref{thm:time_evolution} and
Figure~\ref{fig:dynamical_phase_diagram}. This has to be expected as
already the behavior of the fully non-symmetric static model is subtle
\parencite{KuMe20}.  It forces us to make use of the computer
for exact symbolic computations, in the derivation of the transition
curves (BU, ACE and TPE in Figure~\ref{fig:dynamical_phase_diagram},
discussed in Sects.~\ref{sec:bu}, \ref{sec:ace} and \ref{sec:tpe}),
along with some numerics for our bifurcation analysis.  We believe
that these tools (see page~\pageref{source_code_info}) may also be
useful elsewhere.

Now, our approach rests on singularity theory
\parencite{PoSt78,ArGuVa85,GaMaMi98,GaMaMi99,Broecker75} for
the appropriate conditional rate functional of the dynamical model.
This provides us with a four-parameter family of potentials, for a
two-dimensional state-variable taking values in a simplex.  It turns
out that the understanding of the parameter dependence of the
dynamical model is necessarily based on the good understanding of the
bifurcation geometry of the free energy landscape of the static case
for general vector-valued fields \parencite{KuMe20}.  In that
paper, which generalizes the results of \textcite{ElWa90,Wang94}, we
lay out the basic methodology. Therein we also explain the
phenomenology of transitions (umbilics, butterflies, beak-to-beak)
from which we need to build here for the dynamical problem.

As a result of the present paper we show that the unfoldings of the
static model indeed reappear in the dynamical setup, and acquire new
relevance as hidden phase transitions. It is important to note that,
in order for this to be true, we have to restrict to mid-range inverse
temperatures $\beta<3$. More work has still to be done to treat the
full range of inverse temperatures for the dynamical model, where more
general transitions seem to appear for very low temperatures.  For the
scope of the present paper, it is this close connection between the static
model \parencite{KuMe20} in fully non-symmetric external fields, and the
symmetrically time-evolved symmetric model in intermediate $\beta$
range, which is really crucial to unravel the types of trajectories of
bad empirical measures of Theorem~\ref{thm:time_evolution}.  It would
be challenging to exploit whether an analogous non-trivial connection, that
we observe for our particular model, holds for more general classes of
models.  This clearly asks for more research.

\subsection{Overview and organization of the paper}

In the present paper we study the simplest model which is, together
with its time-evolution, invariant under the permutation group with
three elements: We consider the 3-state Curie--Weiss Potts model in
zero external field, under an independent symmetric stochastic
spin-flip dynamics.  Based on previous examples \parencite{KuLe07},
one may expect loss without recovery of the Gibbs property for all
initial temperatures lower than a critical one (which then may or may
not coincide with the critical temperature of the initial model), and
Gibbsian behavior for all times above the same critical temperature.
We show that this is not the case for our model, and the behavior is
much more complicated: The trajectories of the model show a much
greater variety, depending on the initial temperature.  We find a
regime of Gibbs forever (I), a regime of loss with recovery (II) and a
regime of loss without recovery (III).
Figure~\ref{fig:intro:phase_diagram} shows the non-Gibbs region in the
two-dimensional space of initial temperature and time.  The boundary
of this non-Gibbs region consists of three different curves which
correspond to \emph{exit scenarios} of different types of \emph{bad
  empirical measures}.  Bad empirical measures are points of
discontinuity of the limiting conditional probabilities as defined in
Definition~\ref{def:model:seq_gibbs}.  Under the time evolution
$t\uparrow \infty$ (or equivalently $g_t \downarrow 0$ given by
\eqref{eq:def:g_t}) the system moves along vertical lines of fixed
$\beta$ towards the temperature axis.  Intersections with a finite
number of lines occur along this way, which are responsible for the
transitions described in our main theorem,
Theorem~\ref{thm:time_evolution}.  These additional relevant lines are
shown in Figure~\ref{fig:dynamical_phase_diagram}.
Theorem~\ref{thm:time_evolution} rests on the understanding of the
structure of stationary points of the time-dependent conditional rate
function given in Formula~\eqref{eq:hs_transform} via singularity
theory.

\begin{figure}
  \centering
  \includegraphics{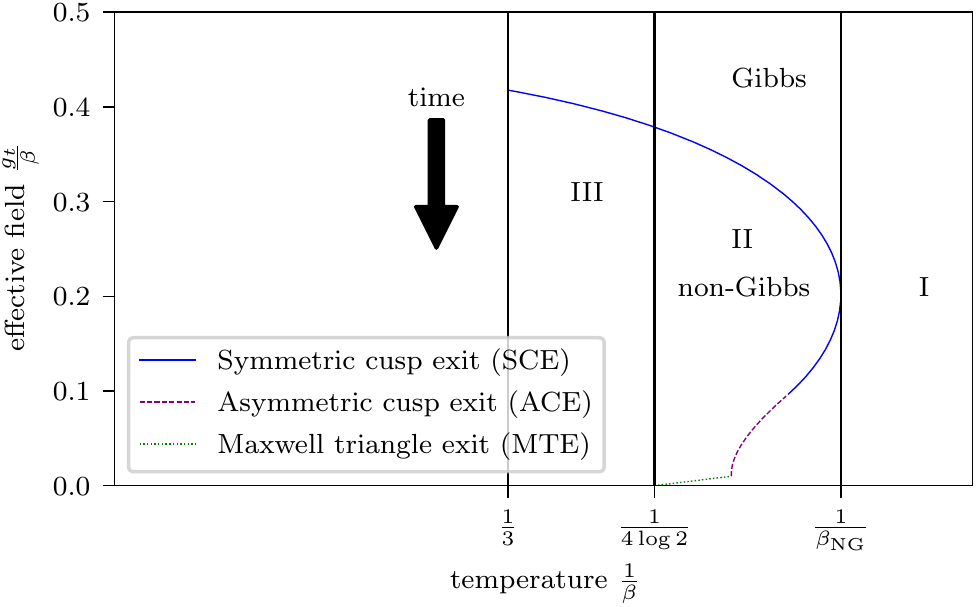}
  \caption{This figure shows the non-Gibbs region for the mid-range
    temperature regime we consider.  The boundary of this region
    consists of three different curves which correspond to exit
    scenarios of bad empirical measures.}
  \label{fig:intro:phase_diagram}
\end{figure}

It turns out that the bifurcations we encounter for general values of
the four-dimensional parameter
\((\alpha, \beta, t) \in \Delta^2 \times (0, \infty) \times (0,
\infty)\) (see \eqref{eq:unit_simplex}) are of the same types as for
the static model depending on a three-dimensional parameter.  However,
this holds only \emph{if} we restrict to mid-range inverse
temperatures \(\beta < 3\) \emph{and} to endconditionings $\alpha$
taking values in the unit simplex (and not in the full hyperplane
spanned by the simplex).  Nevertheless, in order to understand the
relevant singularities, the analysis is best done by first relaxing
the probability measure constraint on the parameter $\alpha$ and allow
it to take values in the hyperplane.  The analysis proceeds with a
description of the \emph{bifurcation set}, where the structure of
stationary points of the conditional rate function changes, and the
\emph{Maxwell set}, where multiple global minimizers appear.  To pick
from these transitions the ones which are relevant to the problem of
sequential Gibbsianness and visible on the level of bad empirical
measures, we have to take the probability measure constraint for
\(\alpha\) into account.  This step is neither necessary in the static
Potts nor in the dynamical symmetric Ising model.  The lines
\emph{Symmetric cusp exit (SCE), Asymmetric cusp exit (ACE), Triple
  point exit (TPE) and Maxwell triangle exit (MTE)} depicted in the
full phase diagram in Figure~\ref{fig:dynamical_phase_diagram} are
examples of such exit scenarios.  For those lines there is an exit of
a certain particular critical value of $\alpha$ from the unit simplex
(observation window).  The detailed dynamical phase diagram in
Figure~\ref{fig:dynamical_phase_diagram} shows more information about
the transitions during time evolution.  Preliminary investigations
show that the structural similarity with the static case may no longer
be valid in the regime $\beta>3$.  Therefore we leave the region of
very low temperatures for future research.

We describe the model we are considering together with its
time-evolution in Sect.~\ref{sec:intro:model} where we also define
what we mean by Gibbsianness (or the sequential Gibbs property).  In
Sect.~\ref{sec:main_result} we present our main theorem and describe
the transitions of the sets of bad empirical measures as a function of
the parameters \(\beta\) and \(t\).  We will establish the connection
between the analysis of the potential function
\(G_{\alpha, \beta, t}\) and the Gibbs property of the time-evolved
model in Sect.~\ref{sec:gibbs_vs_potential}.  The analysis of the
potential function using the methods of singularity theory is then carried out
in the Sects.~\ref{sec:loss_and_recovery} and \ref{sec:loss_only}.

\subsection{The model and sequential Gibbsianness}
\label{sec:intro:model}

We consider the mean-field Potts model with three states in vanishing
external field under an independent symmetric spin-flip dynamics.  The
space of configurations in finite-volume \(n \ge 2\) is defined as
\(\Omega_n = \{1, 2, 3\}^n\) and the Hamiltonian of the initial model
is
\begin{equation}
  \label{eq:intro:model:hamiltonian}
  H_n(\sigma) = -\frac{1}{2n} \sum_{i, j=1}^n \delta_{\sigma_i, \sigma_j}.
\end{equation}
So at time \(t = 0\) the distribution of the model is given by
\begin{equation}
  \label{eq:intro:model:time_zero_measure}
  \mu_{n, \beta}(\sigma) = \frac
  {e^{-\beta H_n(\sigma)}}
  {\sum_{\tilde\sigma\in\Omega_n}  e^{-\beta H_n(\tilde\sigma)}}.
\end{equation}
We consider a rate-one symmetric spin-flip time-evolution in terms of
independent Markov chains on the sites with transition probabilities
\begin{equation}
  \label{eq:intro:model:trans_prob}
  p_t(a, b) = \frac{e^{g_t 1_{b=a}}}{e^{g_t} + 2}
\end{equation}
from state \(a\) to \(b\) where
\begin{equation}
  \label{eq:def:g_t}
  g_t = \log\frac{1+2e^{-3t}}{1-e^{-3t}}.
\end{equation}
We are interested in the Gibbsian behavior of the time-evolved measure
\begin{equation}
  \label{eq:intro:model:time_evolved_measure}
  \mu_{n, \beta, t}(\eta) =
  \sum_{\sigma\in\Omega_n} \mu_{n, \beta}(\sigma) \prod_{i=1}^n p_t(\sigma_i, \eta_i).
\end{equation}
 The \emph{unit simplex}
\begin{equation}
  \label{eq:unit_simplex}
  \Delta^2 = \{\nu \in \mathbb{R}^3 \:|\: \nu_i \ge 0, \sum_{i=1}^3 \nu_i = 1\}
\end{equation}
contains the empirical distributions of spins.  By Gibbsian behavior
we mean the existence of limiting conditional probabilities in the
following sense.

\begin{definition}
  \label{def:model:seq_gibbs}
  The point \(\alpha\) in \(\Delta^2\) is called a \emph{good point}
  if and only if the limit
  \begin{equation}
    \gamma_{\beta, t}(\cdot|\alpha) :=
    \lim_{n\to\infty} \mu_{n, \beta, t}(\cdot|\eta_{n,2},\dots,\eta_{n,n})
  \end{equation}
  exists for every family \(\eta_{n, k} \in \{1, 2, 3\}\) with
  \(n\ge 2\) and \(2 \le k \le n\) such that
  \begin{equation}
    \lim_{n\to\infty} \frac{1}{n-1} \sum_{k=2}^n \eta_{n,k} = \alpha.
  \end{equation}
  We call \(\alpha\) bad, if it is not good.  The model
  \(\mu_{\beta, t}\) is called \emph{sequentially Gibbs} if all
  \(\alpha\) in the unit simplex \(\Delta^2\) are good points.
\end{definition}

\section{Dynamical Gibbs\,--\,non-Gibbs transitions: main result}
\label{sec:main_result}

Our main result on the dynamical Gibbs\,--\,non-Gibbs transitions in
the high-to-intermediate temperature regime for the initial inverse
temperature $\beta<3$ is as follows.  This temperature regime ranges
from high temperature, covering the phase transition temperature
(Ellis-Wang inverse temperature $\beta = 4\log 2$), up to the elliptic
umbilic point $\beta=3$ (where the central stationary point of the
time-zero rate function in zero external field changes from minimum to
maximum).

Essential parts of the structure of the trajectories of dynamical
transitions as a function of time $t$ in the regime $\beta<3$ remain
unchanged over the three inverse-temperature intervals I, II and III,
which were already visualized in Figure~\ref{fig:intro:phase_diagram}.
The type of transitions can be understood as deformations of the
sequences of transitions found in the static Potts model in general
vector-valued fields analyzed in \cite{KuMe20}, where in that
case only the one-dimensional parameter $\beta$ was varied.  Observe
that however, the dynamical transitions we describe here, do not
necessarily occur in a monotonic order with respect to what is seen in
the static model under temperature variation.  This is for instance
(but not only) apparent in the phenomenon of recovery of Gibbsianness.
At very low temperatures (\(\beta > 3\)) different bifurcations seem
to occur which will be left for future research.  While reading the
following theorem it is useful to have
Figure~\ref{fig:dynamical_phase_diagram} in mind as the inverse
temperatures and transition times are related to the lines depicted in
the \emph{dynamical phase diagram}.

\begin{figure}
  \centering
  \includegraphics{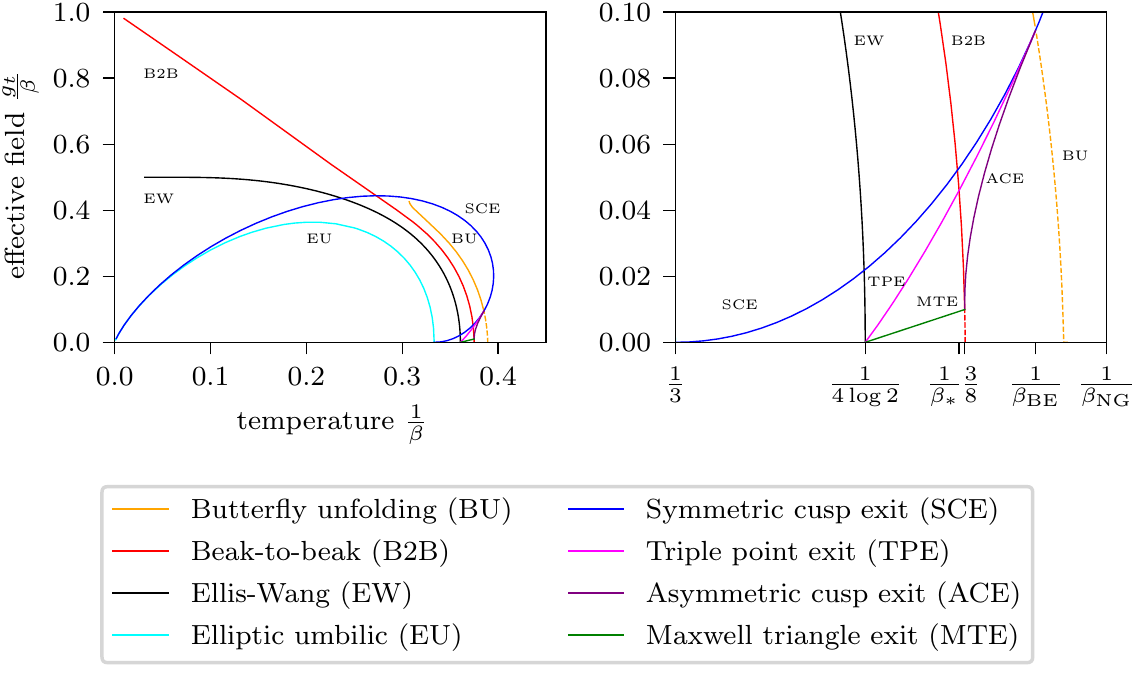}
  \caption{This figure shows the dynamical phase diagram which
    displays all lines in the two-dimensional space of
    \(\frac{1}{\beta}\) and \(\frac{g_t}{\beta}\) at which the
    structure of the bifurcation set slice or the Maxwell set slices
    changes.  We have also marked the six important temperatures in
    the magnified plot on the right.}
  \label{fig:dynamical_phase_diagram}
\end{figure}

\begin{theorem}\label{thm:time_evolution}
  Consider the time-evolved Curie--Weiss Potts model given by
  (\ref{eq:intro:model:hamiltonian}\,--\,\ref{eq:intro:model:time_zero_measure})
  in zero external field, for initial inverse temperature $\beta>0$
  and at time $t>0$ under the symmetric spin-flip dynamics
  (\ref{eq:intro:model:trans_prob}\,--\,\ref{eq:intro:model:time_evolved_measure}).
  Then the following holds.
  \begin{enumerate}
    \renewcommand{\theenumi}{\Roman{enumi}}
    \renewcommand{\labelenumi}{(\theenumi)}
  \item For $\beta< \beta_{\mathrm{NG}}\approx 2.52885$ the
    time-evolved model is sequentially Gibbs for all $t>0$.
  \item For \(\beta_{\mathrm{NG}} < \beta < 4\log 2\) the time-evolved
    model loses and then recovers the Gibbs property at sharp
    transition times.  More precisely, there exist
    \(\beta_{\mathrm{BE}} < \beta_{\ast}\) in this interval such that
    the following types of trajectories of sets of bad empirical
    measures occur:
    \begin{enumerate}
    \item For \(\beta < \beta_{\mathrm{BE}}\) the bad empirical
      measures are given by three symmetric straight lines which are
      first growing with time from the midpoints of the simplex edges
      towards the center, then shrinking with time again.
    \item For \(\beta_{\mathrm{BE}} < \beta < \frac{8}{3}\) the bad
      empirical measures are given by three symmetric straight lines
      in a first time interval
      \(t_{\mathrm{NG}}(\beta) < t < t_{\mathrm{BU}}(\beta)\).  For a
      second time interval
      $t_{\mathrm{BU}}(\beta)<t<t_{\mathrm{TPE}}(\beta)$, the set of
      bad empirical measures consists of three symmetric Y-shaped sets
      not touching.  For
      $t_{\mathrm{TPE}}(\beta)<t < t_{\mathrm{ACE}}(\beta)$ the set of
      bad empirical measures consists of six disconnected arcs.  For
      \(t > t_{\mathrm{ACE}}(\beta)\) the system is Gibbsian again.
    \item For \(\frac{8}{3} < \beta < \beta_\ast\) and
      \(t_{\mathrm{NG}}(\beta) < t < t_{\mathrm{BU}}(\beta)\) the bad
      empirical measures consist of three symmetric straight lines.
      For $t_{\mathrm{BU}}(\beta)<t<t_{\mathrm{TPE}}(\beta)$, the set
      of bad empirical measures consists of three Y-shaped sets not
      touching.  For
      $t_{\mathrm{TPE}}(\beta)<t < t_{\mathrm{B2B}}(\beta)$ the set of
      bad empirical measures consists of six disconnected arcs.  For
      \(t_\mathrm{B2B}(\beta) < t < t_{\mathrm{MTE}}(\beta)\) the set
      of bad empirical measures consists of three disconnected arcs.
      For \(t > t_{\mathrm{MTE}}(\beta)\) the system is Gibbsian
      again.  The inverse temperature \(\beta_\ast\) is given by the
      intersection point of the two lines B2B and TPE in
      Figure~\ref{fig:dynamical_phase_diagram}.
    \item For \(\beta_\ast < \beta < 4\log 2\) and
      \(t_{\mathrm{NG}}(\beta) < t < t_{\mathrm{BU}}(\beta)\) the bad
      empirical measures consist of three symmetric straight lines.
      For $t_{\mathrm{BU}}(\beta)<t<t_{\mathrm{B2B}}(\beta)$, the set
      of bad empirical measures consists of three Y-shaped sets not
      touching.  For
      $t_{\mathrm{B2B}}(\beta)<t < t_{\mathrm{TPE}}(\beta)$ the set of
      bad empirical measures consists of a triangle with curved edges
      and three symmetric straight lines attached.  For
      \(t_\mathrm{TPE}(\beta) < t < t_{\mathrm{MTE}}(\beta)\) the set
      of bad empirical measures consists of three disconnected arcs.
      For \(t > t_{\mathrm{MTE}}(\beta)\) the system is Gibbsian
      again.
    \end{enumerate}
  \item For \(4\log 2 < \beta < 3\) the time-evolved model loses the
    Gibbs property without recovery at a sharp transition time and the
    set of bad empirical measures has the following structure: For
    \(t \le t_{NG}(\beta)\) the time-evolved model is Gibbsian. For
    \(t_{NG}(\beta) < t < t_{\mathrm{BU}}(\beta)\) the bad empirical
    measures are given by three symmetric straight lines which are
    growing with time and become Y-shaped sets for
    \(t_{\mathrm{BU}}(\beta) < t < t_{\mathrm{B2B}}(\beta)\).  For
    \(t_{\mathrm{B2B}}(\beta) < t < t_{\mathrm{EW}}(\beta)\) the sets
    then touch and form one connected component consisting of a
    central triangle with three straight lines attached to the
    vertices.  The central triangle then shrinks to a point at
    \(t = t_{\mathrm{EW}}(\beta)\) and the bad empirical measures are
    given by three symmetric straight lines which meet in the simplex
    center for all \(t > t_{\mathrm{EW}}(\beta)\).
  \end{enumerate}
\end{theorem}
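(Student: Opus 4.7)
My plan is to translate the sequential Gibbs question into a question about global minimizers of the conditional rate function $G_{\alpha,\beta,t}$ on the simplex and then to carry out the bifurcation analysis of this four-parameter family of potentials by singularity-theoretic methods, exactly as foreshadowed in Sections \ref{sec:gibbs_vs_potential}--\ref{sec:loss_only}. The first step is to invoke the reduction of Section \ref{sec:gibbs_vs_potential}: via a Hubbard--Stratonovich / two-layer representation of the time-evolved measure \eqref{eq:intro:model:time_evolved_measure}, the limit $\gamma_{\beta,t}(\cdot\mid\alpha)$ in Definition \ref{def:model:seq_gibbs} exists uniquely iff $G_{\alpha,\beta,t}$ has a unique global minimizer in $\nu\in\Delta^2$, so that the set of bad empirical measures coincides with the $\alpha$-slice of the Maxwell set of the family $\{G_{\alpha,\beta,t}\}$. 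This reduces the theorem to a purely analytic statement about how the Maxwell set (and the enveloping bifurcation set of degenerate critical points) cuts the unit simplex as $(\beta,t)$ varies.

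Next I would organize the analysis around the $S_3$-symmetry of the problem. For $\alpha$ on a mirror line of the simplex the potential $G_{\alpha,\beta,t}$ restricts to a one-dimensional $\mathbb Z_2$-equivariant family, which is the natural place for symmetric cusps, butterflies and beak-to-beak transitions (giving the curves SCE, BU and B2B). Off the mirror lines one sees the generic asymmetric cusps and the Whitney umbrella / swallowtail surface of the full 2D family; the exit curves ACE, TPE and MTE are then determined by the condition that a particular distinguished critical value of $\alpha$ (e.g.\ a cusp point, a triple-coexistence point, or the corner of the Maxwell triangle) crosses $\partial \Delta^2$. To identify these universally, I would work first without the probability constraint (allowing $\alpha$ to range in the affine hyperplane spanned by $\Delta^2$) so that the normal forms of Thom's classification apply cleanly, and only at the end impose $\alpha\in\Delta^2$ to select the transitions which are actually visible on empirical measures. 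The central umbilic $\beta=3$ and the Ellis--Wang point $\beta=4\log 2$ then appear as the natural endpoints of this regime: at $\beta=3$ the central stationary point switches from elliptic to hyperbolic type, and at $\beta=4\log 2$ the first-order transition of the static model manifests as the $t\to\infty$ limit of the coexistence structure.

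With this machinery in place, the four subcases (II.a)--(II.d), together with cases (I) and (III), should fall out by continuously tracking, for each $\beta$, the order in which the curves BU, TPE, ACE, B2B and MTE are crossed as $t$ grows (equivalently $g_t$ shrinks along a vertical slice of Figure \ref{fig:dynamical_phase_diagram}). Each pass across one of these curves corresponds to a specific local rearrangement of the Maxwell set: creation of a Y from three collinear arcs at a butterfly, splitting of three Y's into six arcs at a triple-point exit, merging into a Maxwell triangle at a beak-to-beak, disappearance of outer branches at the simplex boundary, etc. The thresholds $\beta_{\mathrm{NG}}$, $\beta_{\mathrm{BE}}$, $8/3$, $\beta_\ast$ and the Ellis--Wang point are precisely the $\beta$-coordinates at which the ordering of these crossings changes; they are determined by the intersection points of the transition curves in the dynamical phase diagram and computed by solving the corresponding equivariant normal-form equations (the numerical value $\beta_{\mathrm{NG}}\approx 2.52885$, for instance, is the root of the resulting resultant system).

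The main obstacle is twofold. First, because $G_{\alpha,\beta,t}$ depends on four real parameters and the state variable is two-dimensional, the bifurcation and Maxwell sets are semi-algebraic of high degree; extracting closed-form transition curves requires symbolic elimination and the verification that no further singularities intervene in the mid-range $\beta<3$. I would handle this with computer algebra (as the authors announce on page \pageref{source_code_info}), carefully checking that the computed discriminants are nonvanishing where needed so that the singularities one encounters are exactly those of the announced ADE/umbilic types. Second, one must rule out additional, non-listed transitions inside the simplex; this is where the restriction $\beta<3$ is essential, since it guarantees by a Morse-theoretic / index count that the central stationary point contributes no unfolding of higher codimension than already accounted for, and hence the catalog of exit scenarios given in (II)--(III) is complete.
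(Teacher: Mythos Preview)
Your proposal is correct and follows essentially the same strategy as the paper: reduce sequential Gibbsianness to uniqueness of the global minimizer of $G_{\alpha,\beta,t}$ (Theorem~\ref{thm:inf_vol_cond_prob}), then analyse the bifurcation and Maxwell sets via singularity theory, exploiting the $S_3$-symmetry, working first in the full hyperplane and only afterwards imposing $\alpha\in\Delta^2$, and computing each transition curve (SCE, BU, ACE, TPE, B2B, MTE, EW, EU) with the aid of computer algebra. The one place where the paper is less clean than your sketch suggests is the exclusion of further singularities for $\beta<3$: rather than a Morse-theoretic index count, the paper invokes a stated universality \emph{conjecture} relating the dynamical and static models and supports it by numerical computation of the bifurcation-set slices.
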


The meaning and computation of these lines are discussed in
Sects.~\ref{sec:loss_and_recovery} and \ref{sec:loss_only}.  While
only the three lines SCE, ACE and MTE appear as part of the boundary
line of the non-Gibbs region, the other lines are relevant for
structural changes of the set of bad empirical measures.  There are
lines which are explicit in the sense that they are given in terms of
zeros of one-dimensional non-linear functions, for example, the entry
time $t_{\mathrm{NG}}(\beta)$ (formula \eqref{eq:t_NG}) or the
butterfly unfolding time \(t_{\mathrm{BU}}(\beta)\)
(Formula~\eqref{eq:bu:t}).  The least explicit lines are the MTE and
TPE lines which involve a Maxwell set computation, the most explicit
line is SCE which is given in parametric form
\(s\mapsto (\beta(s), g_t(s))\) as described in
Proposition~\ref{prop:sce}.  Figure~\ref{fig:bad_measures} gives a
graphical overview of the possible types of sequences of bad empirical
measures with increasing time for the different temperature regimes.
There is an even more detailed graphic that illustrates all the
transitions involved in the bifurcation set as well as in the Maxwell
set.  You can find this graphic in the electronic supplemental
material (ESM) under the filename \texttt{detailed\_overview.pdf}.

\begin{figure}
  \centering
  \includegraphics{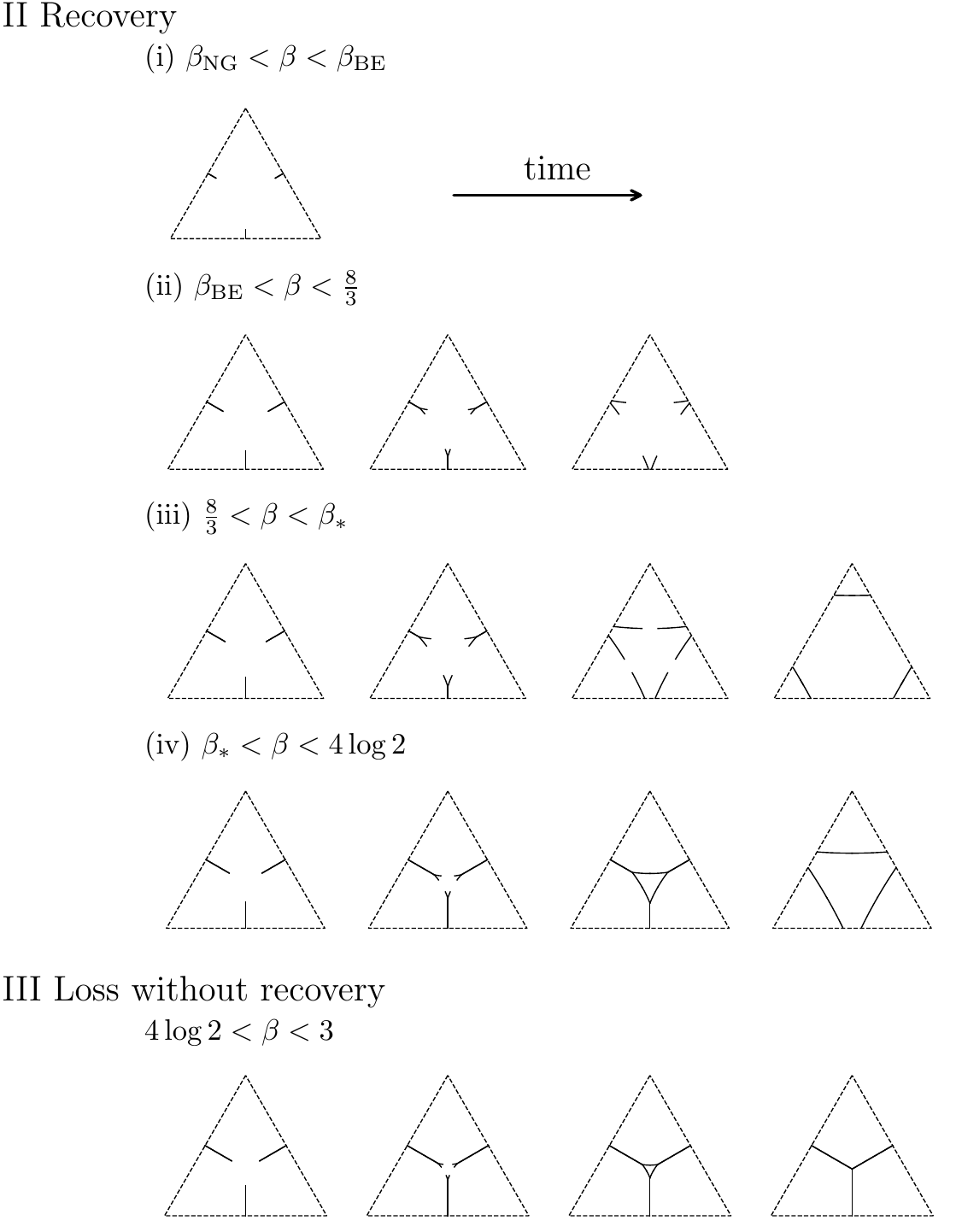}
  \caption{These are the typical sequences of bad empirical measures
    \(\alpha\) for the inverse temperature regimes described in
    Theorem~\ref{thm:time_evolution}.  With increasing time, you can
    observe the structural change of the set of bad empirical measures
    as it passes the various transition times.  For example in (II.ii)
    straight lines enter the simplex, become non-touching Y-shaped
    sets at the butterfly transition time \(t_{\mathrm{BU}}(\beta)\)
    and move out of the simplex.  The midpoints of the Y-shaped sets
    exit at \(t_{\mathrm{TPE}}(\beta)\) and the set leaves the simplex
    completely at \(t_{\mathrm{ACE}}(\beta)\).  In (II.iii) the
    midpoints of the Y-shaped sets leave the unit simplex at
    \(t_{\mathrm{TPE}}(\beta)\) and the two respective arcs connect at
    the beak-to-beak transition time \(t_{\mathrm{B2B}}(\beta)\).  The
    remaining three arcs move towards the corners and leave the unit
    simplex at \(t_{\mathrm{MTE}}(\beta)\).  The exit of the midpoints
    of the Y-shaped sets and the connection of the six arcs occurs in
    reversed order in the next row (II.iv).  In (III) the central
    triangle shrinks to a point and forms the star-like set that
    remains in the simplex forever.}
  \label{fig:bad_measures}
\end{figure}

\section{Infinite-volume limit of conditional probabilities}
\label{sec:gibbs_vs_potential}

The existence of the infinite-volume limit of the conditional
probabilities, that is, the question of sequential Gibbsianness, can be
transformed into an optimization problem of a certain potential
function.  As the parameters \((\beta, t)\) are fixed throughout this
section let us write \(\mu_n\) for the measure \(\mu_{n, \beta, t}\).

\begin{theorem}
  \label{thm:inf_vol_cond_prob}
  Suppose the Hubbard-Stratonovi\v{c} (HS) transform
  \(G_{\alpha, \beta, t}\colon \mathbb{R}^3 \to \mathbb{R}\) given by
  \begin{equation}
    \label{eq:hs_transform}
    G_{\alpha, \beta, t}(m) =
    \frac{1}{2} \beta \langle m, m \rangle - \sum_{b=1}^3 \alpha_b \log \sum_{a=1}^3 e^{\beta m_a + g_t 1_{a=b}}
  \end{equation}
  has a unique global minimizer, then $\alpha$ is a good point, that
  is, the infinite-volume limit of the conditional probabilities
  $\mu_n(\cdot|\alpha_n)$ with $\alpha_n\to\alpha$ exists
  independently of the choice of \((\alpha_n)\).
\end{theorem}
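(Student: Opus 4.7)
The plan is to use a Hubbard–Stratonovič (Gaussian) representation of the quadratic mean-field interaction to express $\mu_n$ as an integral over an auxiliary field $m\in\mathbb{R}^3$, then to rewrite the conditional probability $\mu_n(\eta_1=c\mid\eta_2,\dots,\eta_n)$ as a ratio of such integrals, and finally to extract the limit by a Laplace-type concentration argument that relies only on the uniqueness hypothesis for the minimizer of $G_{\alpha,\beta,t}$.

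Concretely, I would first apply the Gaussian identity
\begin{equation*}
e^{\frac{n\beta}{2}\langle L_n(\sigma), L_n(\sigma)\rangle}
\propto \int_{\mathbb{R}^3}\exp\!\Bigl(-\tfrac{n\beta}{2}\langle m,m\rangle + \beta\sum_i m_{\sigma_i}\Bigr)\, dm
\end{equation*}
to the time-zero weights, interchange sum and integral, and absorb the single-site dynamics through $\sum_a e^{\beta m_a}p_t(a,b)\propto Z(m,b):=\sum_a e^{\beta m_a+g_t 1_{a=b}}$. This yields $\mu_n(\eta)\propto\int e^{-nG_{L_n(\eta),\beta,t}(m)}\,dm$ with $G$ as in~(\ref{eq:hs_transform}). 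Separating site $1$ from the rest and letting $\tilde\alpha_{n-1}$ denote the empirical measure of $(\eta_2,\dots,\eta_n)$, the conditional probability reduces to
\begin{equation*}
\mu_n(\eta_1=c\mid\eta_2,\dots,\eta_n)
= \frac{\displaystyle\int Z(m,c)\,e^{-\frac{\beta}{2}\langle m,m\rangle}\,e^{-(n-1)G_{\tilde\alpha_{n-1},\beta,t}(m)}\,dm}
      {\displaystyle\sum_{c'=1}^{3}\int Z(m,c')\,e^{-\frac{\beta}{2}\langle m,m\rangle}\,e^{-(n-1)G_{\tilde\alpha_{n-1},\beta,t}(m)}\,dm}.
\end{equation*}
The essential simplification is that the smooth prefactors $Z(m,c)\,e^{-\frac{\beta}{2}\langle m,m\rangle}$ carry no $n$-dependence; everything else sits in the large exponent and in the slowly moving parameter $\tilde\alpha_{n-1}$.

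The second step is a Laplace asymptotic analysis as $n\to\infty$. Because $G_{\alpha,\beta,t}$ is coercive (its large-$|m|$ behavior is dominated by $\frac{\beta}{2}\langle m,m\rangle$ uniformly in $\alpha\in\Delta^2$), both integrals may be restricted to a fixed large ball $K$ at the price of an exponentially small relative error. On $K$ the map $\alpha\mapsto G_{\alpha,\beta,t}$ is affine and smooth, so $G_{\tilde\alpha_{n-1},\beta,t}\to G_{\alpha,\beta,t}$ uniformly. For any open neighborhood $U$ of $m^*(\alpha)$, the hypothesis of a \emph{unique} global minimizer yields a strict gap $\inf_{K\setminus U}G_{\alpha,\beta,t}>\min_K G_{\alpha,\beta,t}$; together with the uniform convergence, this forces the probability measures
\begin{equation*}
\nu_n(dm)\propto e^{-(n-1)G_{\tilde\alpha_{n-1},\beta,t}(m)}\,dm,\qquad m\in K,
\end{equation*}
to concentrate weakly on $\{m^*(\alpha)\}$. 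Replacing the continuous prefactors by their values at $m^*(\alpha)$ then yields
\begin{equation*}
\gamma_{\beta,t}(c\mid\alpha)=\frac{Z(m^*(\alpha),c)}{\sum_{c'=1}^{3}Z(m^*(\alpha),c')},
\end{equation*}
a limit that depends on the sequence $(\eta_{n,k})$ only through $\alpha$, proving that $\alpha$ is a good point.

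The main obstacle I expect is the combination of two sources of $n$-dependence: the large factor $n$ in the exponent and the drift of the conditioning parameter $\tilde\alpha_{n-1}\to\alpha$. At a bifurcation point of $G_{\alpha,\beta,t}$ the Hessian at $m^*(\alpha)$ may be degenerate, so a classical second-order Gaussian Laplace expansion is unavailable. The virtue of the concentration formulation above is that it bypasses non-degeneracy entirely: it requires only uniqueness of the global minimizer together with uniform coercivity, and the upper semicontinuity of the argmin correspondence in the perturbed parameter, which follows from these ingredients alone, is enough to drive the ratio to its claimed limit.
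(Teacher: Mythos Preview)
Your proposal is correct and follows essentially the same strategy as the paper: a Hubbard--Stratonovi\v{c} representation leading to integrals governed by the exponent $-(n-1)G_{\tilde\alpha_{n-1},\beta,t}$, followed by a Laplace/concentration argument under the unique-minimizer hypothesis. The paper organizes the computation slightly differently---it first rewrites the conditional probability as an expectation of a function of the empirical measure under a quenched first-layer Potts model (Lemma~\ref{lem:cond_first_layer}), and then identifies the density of the Gaussian-perturbed empirical measure with $e^{-(n-1)G_{\alpha_n,\beta,t}}$ (Lemma~\ref{lem:gauss-noise-repr})---whereas you apply the Gaussian linearization directly to the joint measure and condition afterwards; both routes land on the same integral ratio. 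Your treatment of the Laplace step is in fact more explicit than the paper's (which defers to \textcite{ElWa90}), and your remark that only uniqueness and coercivity are needed, not a nondegenerate Hessian, is a useful observation.
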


  The idea of the proof goes as follows:
  We can rewrite the conditional probabilities
  \(\mu_n(\cdot|\alpha_n)\) in terms of an expected value with respect
  to a disordered mean-field Potts model \(\bar\mu_n\) (see
  Lemma~\ref{lem:cond_first_layer}).  Thus, we have to study the weak
  convergence of \(L_n\), where \(L_n\) is the empirical distribution
  of the spins \(\sigma_2, \dots, \sigma_n\). Note that this is
  equivalent to the weak convergence of
  \(\frac{W}{\sqrt{\beta(n-1)}} + L_n\) with some independent standard
  normal variable \(W\).  Because of the representation of the
  distribution of \(\frac{W}{\sqrt{\beta(n-1)}} + L_n\) in terms of
  the function \(G_{\alpha_n, \beta, t}\)
  (Lemma~\ref{lem:gauss-noise-repr}), we can prove the theorem by an
  asymptotic analysis of integrals of the form
  \begin{equation}
    \int_{\mathbb{R}^3} f(m) e^{-(n-1) G_{\alpha_n,\beta, t}(m)} \:\mathrm{d}m
  \end{equation}
  as was done by \textcite{ElWa90}.
So it suffices to prove the Lemmata~\ref{lem:cond_first_layer}
and \ref{lem:gauss-noise-repr}.
A point is good if the respective random field model shows no phase
transition, that is, the law of large numbers holds. To be precise, we
have the following representation:

\begin{lemma}
  \label{lem:cond_first_layer}
  The finite-volume conditional probabilities are given by
  \begin{equation}
    \mu_{n}(\eta_1|\eta_2,\dots,\eta_n) = \bar\mu_{n}[\eta_2, \dots, \eta_n](f_n^{\eta_1})
  \end{equation}
  where
  \begin{equation}
    f_n^{\eta_1}(\sigma_2, \dots, \sigma_n) = \frac{\sum_a \exp\left(
        \frac{\beta}{n} \sum_{i=2}^n 1_{\sigma_i = a}
      \right) p_t(a, \eta_1)}{
      \sum_a \exp\left(
        \frac{\beta}{n} \sum_{i=2}^n 1_{\sigma_i = a}
      \right)}
  \end{equation}
  and \(\bar\mu_n\) is a quenched random field Potts model
  \begin{equation}
    \bar\mu_n[\eta_2, \dots, \eta_n](\sigma_2, \dots, \sigma_n) =
    \frac{
      \exp(\frac{\beta}{2n} \sum_{i,j=2}^n 1_{\sigma_i = \sigma_j}) \prod_{i=2}^np_t(\sigma_i, \eta_i)
    }{
     \sum_{\tilde\sigma_2, \dots, \tilde\sigma_n} \exp(\frac{\beta}{2n} \sum_{i,j=2}^n 1_{\tilde\sigma_i = \tilde\sigma_j}) \prod_{i=2}^np_t(\tilde\sigma_i, \eta_i)
    }
  \end{equation}
\end{lemma}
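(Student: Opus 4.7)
The cleanest route is through the tower property of conditional expectation applied to the two-layer joint distribution $\nu_n(\sigma,\eta) := \mu_{n,\beta}(\sigma) \prod_{i=1}^n p_t(\sigma_i,\eta_i)$, whose $\eta$-marginal is exactly $\mu_{n,\beta,t}$. Since $p_t(\sigma_i,\cdot)$ is the conditional law of $\eta_i$ given $\sigma_i$, one has
\[
\mu_n(\eta_1\mid\eta_2,\dots,\eta_n) = E_{\nu_n}\!\bigl[p_t(\sigma_1,\eta_1)\bigm|\eta_2,\dots,\eta_n\bigr].
\]

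Next I would apply the tower property to condition additionally on the first-layer variables $(\sigma_2,\dots,\sigma_n)$. Because the factors $p_t(\sigma_i,\eta_i)$ for $i\ge 2$ do not involve $\sigma_1$, the inner conditional law of $\sigma_1$ given $(\sigma_2,\dots,\sigma_n,\eta_2,\dots,\eta_n)$ under $\nu_n$ coincides with the plain Curie--Weiss conditional $\mu_{n,\beta}(\sigma_1=a\mid\sigma_2,\dots,\sigma_n)$. The elementary Hamiltonian decomposition
\[
\sum_{i,j=1}^n 1_{\sigma_i=\sigma_j} = 1 + 2 N_{\sigma_1} + \sum_{i,j=2}^n 1_{\sigma_i=\sigma_j},\qquad N_a := \sum_{i=2}^n 1_{\sigma_i=a},
\]
isolates the entire $\sigma_1$-dependence of $H_n$ in the single factor $e^{\beta N_{\sigma_1}/n}$ and identifies this conditional as $e^{\beta N_a/n}/\sum_b e^{\beta N_b/n}$. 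Summing against $p_t(\cdot,\eta_1)$ then produces precisely $f_n^{\eta_1}(\sigma_2,\dots,\sigma_n)$ as the inner conditional expectation.

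It remains to identify the outer measure in the tower, namely the conditional law of $(\sigma_2,\dots,\sigma_n)$ given $(\eta_2,\dots,\eta_n)$ under $\nu_n$. By Bayes, this is proportional to the $(\sigma_2,\dots,\sigma_n)$-marginal of $\mu_{n,\beta}$ times $\prod_{i=2}^n p_t(\sigma_i,\eta_i)$; carrying out the $\sigma_1$-marginalisation in $\mu_{n,\beta}$ via the same decomposition above yields the density of $\bar\mu_n[\eta_2,\dots,\eta_n]$ in the statement, up to a normalisation absorbed into the denominator. The main obstacle is purely clerical: the factor $\sum_a e^{\beta N_a/n}$ produced by the $\sigma_1$-marginalisation has to be tracked consistently through both the outer measure and the denominator of $f_n^{\eta_1}$ so that everything reorganises into the claimed expectation. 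No analytical tools beyond finite-sum algebra are required.
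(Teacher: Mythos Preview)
Your plan is correct and is exactly the ``explicit computation with conditional probabilities'' the paper's one-line proof refers to: the two-layer tower, the Hamiltonian splitting isolating $e^{\beta N_{\sigma_1}/n}$, and the Bayes identification of the outer law are precisely the ingredients.

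One caution on the bookkeeping you flag at the end. Carrying out the $\sigma_1$-marginalisation shows that the conditional law of $(\sigma_2,\dots,\sigma_n)$ given $(\eta_2,\dots,\eta_n)$ under $\nu_n$ has density proportional to
\[
e^{\frac{\beta}{2n}\sum_{i,j\ge 2}1_{\sigma_i=\sigma_j}}\Bigl(\textstyle\sum_a e^{\beta N_a/n}\Bigr)\prod_{i\ge 2}p_t(\sigma_i,\eta_i),
\]
i.e.\ it differs from the stated $\bar\mu_n$ by the factor $\sum_a e^{\beta N_a/n}$. This factor does \emph{not} cancel against the denominator of $f_n^{\eta_1}$ in the way your last sentence suggests: it sits inside the outer $\sigma$-sum, so one gets $\sum wA\big/\sum wB$ rather than $\sum w(A/B)\big/\sum w$. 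The tower identity therefore produces the lemma with this extra factor absorbed into $\bar\mu_n$. The discrepancy is immaterial for the subsequent asymptotics, since the factor is a bounded continuous function of $L_n$, but be aware that the ``reorganisation'' you anticipate is not an algebraic identity.
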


\begin{proof}
  The proof follows from explicit computations with conditional
  probabilities.
\end{proof}

This representation of the conditional probabilities transforms the
problem of understanding bad points to the analysis of disordered mean-field
models and their phase transitions. This analysis is done using
the Hubbard-Stratonovi\v{c} transformation which is successfully used
for many models \parencite{ElNe78,ElWa90,KuLe07}.

\begin{lemma}
  \label{lem:gauss-noise-repr}
  Write
  \begin{equation}
    L_n = \frac{1}{n-1} \sum_{i=2}^n \delta_{\sigma_i}
  \end{equation}
  for the empirical measure of \(n-1\) spins with law
  \(\bar\mu_n[\eta_2, \dots, \eta_n] \circ L_n^{-1}\).  Furthermore,
  let \(W\) be a standard normal random vector independent of \(L_n\).
  The distribution of \(W/\sqrt{\beta(n-1)} + L_n\) has a density
  proportional to \(e^{-(n-1) G_{\alpha_n, \beta, t}}\) with respect
  to Lebesgue measure.
\end{lemma}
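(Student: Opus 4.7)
The plan is to perform the Hubbard--Stratonovi\v{c} manipulation explicitly, using independence of $W$ and $L_n$ and the factorisation of the random-field part of $\bar\mu_n[\eta_2,\dots,\eta_n]$ conditional on the spins. First, since $W/\sqrt{\beta(n-1)}$ has Lebesgue density proportional to $e^{-\frac{\beta(n-1)}{2}\lVert w\rVert^2}$ on $\mathbb{R}^3$, I would write the density of $W/\sqrt{\beta(n-1)} + L_n$ at a point $m\in\mathbb{R}^3$ as the convolution
\begin{equation*}
  f_n(m) \;\propto\; \mathbb{E}\bigl[e^{-\frac{\beta(n-1)}{2}\lVert m - L_n\rVert^2}\bigr]
  \;=\; \sum_{\sigma_2,\dots,\sigma_n} \bar\mu_n[\eta_2,\dots,\eta_n](\sigma_2,\dots,\sigma_n)\, e^{-\frac{\beta(n-1)}{2}\lVert m - L_n(\sigma)\rVert^2},
\end{equation*}
with the $m$-independent proportionality constant absorbing the $(2\pi)^{3/2}$ factor.

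The key algebraic step is to expand $\lVert m - L_n\rVert^2 = \lVert m\rVert^2 - 2\langle m,L_n\rangle + \lVert L_n\rVert^2$ and observe that the $\lVert L_n\rVert^2$ contribution combines with the quadratic term $\exp(\frac{\beta}{2n}\sum_{i,j=2}^n 1_{\sigma_i=\sigma_j})$ from $\bar\mu_n[\eta]$, using the identity $\sum_{i,j=2}^n 1_{\sigma_i=\sigma_j} = (n-1)^2 \lVert L_n\rVert^2$; this is the standard HS cancellation of the Curie--Weiss quadratic by the Gaussian convolution. After this step, the $\lVert m\rVert^2$ prefactor $e^{-\frac{\beta(n-1)}{2}\lVert m\rVert^2}$ already produces the first term of $(n-1)G_{\alpha_n,\beta,t}(m)$ in the exponent.

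Next I would deal with the cross term: the identity $\beta(n-1)\langle m, L_n\rangle = \beta\sum_{i=2}^n m_{\sigma_i}$ shows that the remaining $\sigma$-dependent weight factorises across the sites $i=2,\dots,n$. Performing the single-site sums then yields a product
\begin{equation*}
  \prod_{i=2}^n \sum_{a=1}^3 e^{\beta m_a}\, p_t(a,\eta_i)
  \;=\; (e^{g_t}+2)^{-(n-1)} \prod_{i=2}^n \sum_{a=1}^3 e^{\beta m_a + g_t 1_{a=\eta_i}},
\end{equation*}
using the explicit form \eqref{eq:intro:model:trans_prob} of $p_t$. Grouping the factors by the values $b\in\{1,2,3\}$ of $\eta_i$ replaces the product over $i$ by $\exp\bigl((n-1)\sum_{b=1}^3 \alpha_{n,b}\log\sum_{a=1}^3 e^{\beta m_a + g_t 1_{a=b}}\bigr)$ since $\alpha_n = \frac{1}{n-1}\sum_{i=2}^n \delta_{\eta_i}$. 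This is exactly $-(n-1)$ times the second term of $G_{\alpha_n,\beta,t}(m)$, and combining with the quadratic prefactor gives $f_n(m) \propto e^{-(n-1)G_{\alpha_n,\beta,t}(m)}$ with an $m$-independent constant that also absorbs $Z_n^{-1}$, $(e^{g_t}+2)^{-(n-1)}$ and the Gaussian normalisation.

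The argument is essentially a bookkeeping exercise and there is no genuine obstacle; the only delicate point is verifying that the residual quadratic-in-$L_n$ contributions coming from the $\frac{\beta}{2n}$ versus $\frac{\beta(n-1)}{2}$ normalisations really match up to an $m$-independent factor, which amounts to carefully tracking the normalising constants when rewriting the Hamiltonian in terms of $\lVert L_n\rVert^2$.
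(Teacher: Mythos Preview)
Your proposal is correct and follows exactly the paper's Hubbard--Stratonovi\v{c} route: convolve with the Gaussian, expand the square, let the $\lVert L_n\rVert^2$ piece absorb the Curie--Weiss quadratic, factorise the cross term $\beta(n-1)\langle m,L_n\rangle=\sum_i \beta m_{\sigma_i}$ over sites, and then group the single-site sums by the value $b=\eta_i$ to produce $\sum_b \alpha_{n,b}\log\sum_a e^{\beta m_a+g_t1_{a=b}}$. Your caveat about the $\beta/(2n)$ versus $\beta(n-1)/2$ mismatch is well placed: the paper's own proof silently writes the weight as $e^{\frac{\beta}{2}(n-1)\lVert L_n\rVert^2}$ rather than $e^{\frac{\beta}{2n}(n-1)^2\lVert L_n\rVert^2}$, so it too absorbs the residual $O(1)$ factor $e^{-\frac{\beta(n-1)}{2n}\lVert L_n\rVert^2}$ without comment---harmless for the large-$n$ asymptotics that the lemma feeds into.
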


\begin{proof}
  Denote by \(\sigma_2, \dots, \sigma_n\) independent
  \(\{1, 2, 3\}\)-valued random variables each distributed according
  to \(p_t(\dd{\sigma_i},\eta_i)\) with a fixed boundary configuration
  \(\eta_2, \dots, \eta_n\) with empirical measure \(\alpha_n\).  We
  denote the expectation with respect to this distribution by
  \(\mathbb{E}\).  Then in order to calculate the distribution of
  \begin{equation}
    \frac{W}{\sqrt{\beta(n-1)}} + L_n = \frac{W}{\sqrt{\beta(n-1)}} +
    \frac{1}{n-1}\sum_{i=2}^n \delta_{\sigma_i}
  \end{equation}
  we calculate for every bounded continuous function \(f\) the
  expectation
  \begin{equation}
    \label{eq:expectation}
    \frac{(2\pi)^{-\frac{3}{2}}}{Z_n}
    \mathbb{E}\left[ \int f\left( w/\sqrt{\beta(n-1)} + L_n \right) e^{-\frac{\Vert w\Vert^2}{2} +
      \frac{\beta}{2}(n-1) \Vert L_n\Vert^2} \dd{w} \right]
  \end{equation}
  Now we apply the transformation \(m = w/\sqrt{\beta(n-1)} + L_n\)
  and obtain
  \begin{equation}
    \frac{(2\pi)^{-\frac{3}{2}}}{Z_n}
    \mathbb{E} \left[ \int f(m) \exp\left(
      -(n-1) \frac{\beta}{2} \Vert m\Vert^2 +
      (n-1) \beta \langle m, L_n\rangle
    \right) \dd{m} \right]
  \end{equation}
  In order to complete the proof, we have to calculate the expectation
  \begin{equation}
    \begin{split}
      \mathbb{E}[\exp((n-1)\beta \langle m, L_n \rangle)] &= \prod_{i=2}^n \mathbb{E}[
      \exp(\beta m_{\sigma_i})]
      \\
      &= \prod_{i=2}^n \sum_{a=1}^3 \frac{e^{\beta m_a + g_t 1_{\eta_i=a}}}{e^{g_t} + 2}
      \\
      &= \frac{1}{(e^{g_t} + 2)^{n-1}} \prod_{i=2}^n \sum_{a=1}^3 e^{\beta
        m_a + g_t 1_{\eta_i=a}}
    \end{split}
  \end{equation}
  Now we take the logarithm to raise the expression back into the
  exponent again. So the expected value~\eqref{eq:expectation} of the
  bounded continuous function~\(f\) is equal to the following up to a
  normalizing constant:
  \begin{equation}
  \int f(m) \exp\left(
    -(n-1) \frac{\beta}{2} \Vert m\Vert^2 +
    \sum_{i=2}^n \log \sum_a e^{\beta m_a + g1_{\eta_i=a}}
  \right) \dd{m}
\end{equation}
We can now identify \(G_{\alpha_n,\beta,t}\) in the exponent using that
\begin{equation}
  \begin{split}
  \sum_{i=2}^n \log \sum_{a=1}^3 e^{\beta m_a + g_t 1_{\eta_i=a}}
  &= (n-1) \sum_{b=1}^3 \frac{1}{n-1} \sum_{i=2}^n 1_{\eta_i=b}
  \log \sum_{a=1}^3 e^{\beta m_a + g_t 1_{b=a}}
  \\
  &= (n-1) \sum_{b=1}^3 \alpha_n(b) \log \sum_{a=1}^3 e^{\beta m_a + g_t 1_{b=a}}.
  \end{split}
\end{equation}
\end{proof}

\section{Recovery of the Gibbs property}
\label{sec:loss_and_recovery}

The regime \(\beta < \frac{8}{3}\) is split into three parts given by
the intervals \((0, \beta_{\text{NG}}]\),
\((\beta_{\text{NG}}, \beta_{\text{BE}}]\) and
\((\beta_{\text{BE}}, \frac{8}{3})\).  In the first part we find that
the model is sequentially Gibbs for all times \(t > 0\) whereas in the
other two parts the system recovers from a state of non-Gibbsian
behavior.  The driving mechanism in this “recovery regime” is due to
the butterfly singularity which is already found in the static model
\parencite[see][Sect.~2.4.1]{KuMe20}.  However, in contrast to the static
model the bifurcation set might leave the unit simplex so that in
order to answer the Gibbs\,--\,non-Gibbs question the location of this
set (and the contained Maxwell set) with respect to the unit simplex
is also important.

\subsection{Elements from singularity theory}

In order to investigate the Gibbs\,--\,non-Gibbs transitions we have
to study the global minimizers of the potential
\(G_{\alpha,\beta, t}\) (Theorem~\ref{thm:inf_vol_cond_prob}).  We
will use concepts from singularity theory to derive and explain our
results.

Singularity theory allows us to understand how the stationary points
of the potential change with varying parameters.  This can be achieved
by looking at the geometry of the so-called \emph{catastrophe
  manifold}, which contains the information about the stationary points
of the potential for every possible choice of parameter values.  More
precisely, it consists of the tuples \((m, \alpha, \beta, t)\) in
\(\mathbb{R}^3 \times \Delta^2 \times (0, \infty) \times (0, \infty)\)
such that \(m\) is a stationary point of \(G_{\alpha, \beta, t}\)
given by \eqref{eq:hs_transform}.  The \emph{bifurcation set} consists
of those parameter values \((\alpha, \beta, t)\) in
\(\Delta^2 \times (0, \infty) \times (0, \infty)\) such that there
exists a degenerate stationary point \(m\) in \(\mathbb{R}^3\), that
is, a point at which the Hessian has a zero eigenvalue.  The parameter
values of the bifurcation set give rise to a partition of the
parameter space whose cells contain parameters at which the number and
nature of stationary points do not change.  Although we are only
interested in \(\alpha\) that are bad \emph{empirical} measures, hence
\emph{probability} measures, it is convenient to loosen this
constraint and consider \(\alpha\) in the hyperplane
\(H = \{m \in \mathbb{R}^3 | m_1 + m_2 + m_3 = 1\}\) into which the
unit simplex is embedded.  The following proposition is the basis for
the analysis of the bifurcation set.

\begin{proposition}
  \label{prop:sing_summary}
  Let \(\Gamma\) denote the map from
  \(\mathbb{R}^3 \times (0, \infty)\) to the space of \(3\times 3\)
  matrices with real entries \(\mathrm{Mat}(3, \mathbb{R})\) given
  by its components
  \begin{equation}
    \Gamma_{b, a}(M, t) = \frac{e^{M_a + g_t 1_{b=a}}}{\sum\limits_{c=1}^3 e^{M_c + g_t 1_{b=c}}}.
  \end{equation}
  Then we have the following:
  \begin{enumerate}
  \item Let \(\rho\) be any permutation of \(\{1, 2, 3\}\). Then
    \begin{equation}
    \label{eq:elem_sing:gamma_symmetry}
    \rho^{-1}\Gamma(M, t)\rho = \Gamma(\rho M, t)
    \end{equation}
    where we interpret the permutation \(\rho\) as a
    \(3\times 3\)-matrix and \(M\) as a column vector.  For example,
    if \(M_2 = M_3\), we find
    \(\Gamma_{3,3}(M, t) = \Gamma_{2,2}(M, t)\) and also
    \(\Gamma_{1, 2}(M, t) = \Gamma_{1, 3}(M, t)\).
  \item \(\Gamma\) maps \(\mathbb{R}^3 \times (0, \infty)\) into the
    general linear group \(\mathrm{GL}(3, \mathbb{R})\) and the
    inverse matrix of \(\Gamma(M, t)\) is given by the formulas
    \begin{align}
      \label{eq:gamma_inverse_diag}
      \Gamma^{-1}_{a,a}(M, t) &= \frac{(e^{g_t} + 1) e^{-M_a}}{ e^{2g_t} +
                                e^{g_t} - 2} \sum_{c=1}^3 e^{M_c + g_t 1_{c=a}}
      \\
      \label{eq:gamma_inverse_offdiag}
      \Gamma^{-1}_{b,a}(M, t) &= -\frac{e^{-M_b}}{e^{2g_t} + e^{g_t} - 2}
                                \sum_{c=1}^3 e^{M_c + g_t 1_{c=a}}
    \end{align}
    for two distinct elements \(a, b\) of \(\{1, 2, 3\}\).
  \item The catastrophe manifold of the HS-transform
    \(G_{\alpha, \beta, t}\) is the graph of the map
    \((m, \beta, t) \mapsto \alpha = \chi(m, \beta, t)\) given by
    \begin{equation}
      \chi(m, \beta, t) = \left(
        \sum_a m_a \Gamma^{-1}_{a, b}(\beta m, t)
      \right)_{b=1}^3
    \end{equation}
    from \(H \times (0, \infty) \times (0, \infty)\) to \(H\).  For
    \(\chi(m, \beta, t)\) to lie in the unit simplex~\(\Delta^2\) it
    is necessary (but generally not sufficient) that \(m\) lies in
    \(\Delta^2\).
  \item Consider the coordinates \((x, y, z) = \varphi_\beta(m)\) where
    \begin{equation}
      \label{eq:coordinates}
      \varphi_\beta(m) = \frac{\beta}{6} \begin{pmatrix}
        \sqrt{3} (m_3 - m_2)\\
        2m_1 - m_2 - m_3\\
        2m_1 + 2m_2 + 2m_3 - 2
      \end{pmatrix}
    \end{equation}
    for \(m \in \mathbb{R}^3\).  In these coordinates, the
    \(\beta\)-scaled simplex~\(\beta\Delta^2\) is an equilateral
    triangle in the \((x, y)\)-plane centered at the origin.  The
    Hessian matrix \(G_{\alpha, \beta, t}''(m)\) in these coordinates
    is in block diagonal form:
    \begin{equation}
      \label{eq:hessian_matrix}
      \begin{pmatrix}
        \pdv[2]{G_{\alpha, \beta, t}}{x}    &
        \pdv[2]{G_{\alpha, \beta, t}}{x}{y} &
        0
        \\
        \pdv[2]{G_{\alpha, \beta, t}}{x}{y} &
        \pdv[2]{G_{\alpha, \beta, t}}{y}    &
        0
        \\
        0 & 0 & \frac{3}{\beta}  \\
      \end{pmatrix}
    \end{equation}
    The set of degenerate stationary points is given by the solutions
    \((m, \beta, t)\) of the following equation:
    \begin{equation}
      \label{eq:elem_sing:deg_cond}
      \pdv[2]{G_{\chi(m, \beta, t), \beta, t}}{x} \pdv[2]{G_{\chi(m, \beta, t), \beta, t}}{y} - \left( \pdv[2]{G_{\chi(m, \beta, t), \beta, t}}{x}{y} \right)^2 = 0
    \end{equation}
  \end{enumerate}
\end{proposition}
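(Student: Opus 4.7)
The four parts are logically tied together: (a) is a symmetry used to simplify later analysis; (b) gives the algebraic ingredient needed to invert the stationarity equation in (c); and (d) identifies the "transverse" direction to the hyperplane $H$ along which $G_{\alpha,\beta,t}$ is essentially quadratic, isolating the two-dimensional singular behaviour. My plan attacks them in this order.

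For (a), I verify the identity entrywise. Unwinding the matrix conjugation, $(\rho^{-1}\Gamma(M,t)\rho)_{b,a} = \Gamma_{\rho(b),\rho(a)}(M,t)$, while the right-hand side $\Gamma_{b,a}(\rho M,t)$ is obtained by replacing $M_c$ with $M_{\rho^{-1}(c)}$ and reindexing the summation over~$c$; the indicator $1_{b=a}$ is permutation-invariant in the paired sense, so the two expressions coincide after the change of summation variable. For (b), I substitute the candidate inverse directly into $\Gamma \Gamma^{-1}$ and simplify. The only non-trivial ingredient is the denominator $e^{2g_t}+e^{g_t}-2 = (e^{g_t}-1)(e^{g_t}+2)$, which arises naturally from the diagonal--off-diagonal pattern of $\Gamma$; after the cancellations, the diagonal entries reduce to $1$ and off-diagonal entries to $0$. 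This is a finite computation but is where the only real bookkeeping lives.

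For (c), I compute $\nabla G_{\alpha,\beta,t}(m)$ from \eqref{eq:hs_transform}: the gradient is $\partial_{m_c} G = \beta m_c - \beta \sum_b \alpha_b \Gamma_{b,c}(\beta m, t)$, so the stationarity equation reads $m^\top = \alpha^\top \Gamma(\beta m,t)$. Inverting with part (b) gives $\alpha = \chi(m,\beta,t)$ as claimed. Since each row of $\Gamma$ sums to $1$, we have $\Gamma\mathbf{1} = \mathbf{1}$ and hence $\Gamma^{-1}\mathbf{1} = \mathbf{1}$; this immediately gives $\sum_b\chi_b(m,\beta,t) = \sum_a m_a$, showing that the map sends $H$ to~$H$. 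The necessity statement ``$\chi(m,\beta,t)\in\Delta^2 \Rightarrow m\in\Delta^2$'' follows from the stationarity equation itself: $m = \alpha^\top \Gamma(\beta m,t)$ expresses $m$ as a convex combination of the rows of $\Gamma$, which are probability vectors.

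For (d), the key observation is translation invariance along $\mathbf{1} = (1,1,1)$. For $\alpha \in H$ a direct expansion yields $G_{\alpha,\beta,t}(m+c\mathbf{1}) = G_{\alpha,\beta,t}(m) + c\beta(m_1+m_2+m_3 - 1) + \tfrac{3\beta}{2}c^2$. Differentiating twice in $c$ gives $\mathbf{1}^\top \mathrm{Hess}(G)\,\mathbf{1} = 3\beta$ and, differentiating once in $c$ and once in any perpendicular direction, $\mathrm{Hess}(G)\,\mathbf{1} = \beta\mathbf{1}$. The linear map $\varphi_\beta$ sends the $z$-axis onto $\mathbb{R}\mathbf{1}$ (its third row is $\tfrac{\beta}{3}(1,1,1)$) while $x$- and $y$-axes map into $\mathbf{1}^\perp$; this is exactly the decomposition that diagonalises $\mathrm{Hess}(G)$ into the announced $2\times 2$ block plus the scalar entry $3/\beta$ (after accounting for the normalisation of $A^{-1}e_z = \frac{1}{\beta}\mathbf{1}$). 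Because $3/\beta > 0$, a degenerate stationary point is one at which the $(x,y)$-block is singular, which gives \eqref{eq:elem_sing:deg_cond} by evaluating the determinant at $\alpha = \chi(m,\beta,t)$. The main obstacle is the algebra in (b); everything else is a symmetry-driven bookkeeping argument.
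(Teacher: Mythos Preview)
Your proposal is correct and essentially matches the paper's proof for parts (a)--(c): the paper also verifies the permutation identity entrywise, checks the inverse formula by direct substitution (after a brief determinant computation you skip, which is fine since verifying a two-sided inverse already establishes invertibility), and derives the catastrophe map by solving the gradient equation $m^\top = \alpha^\top \Gamma(\beta m,t)$, with the $H\to H$ and simplex claims handled exactly as you describe.

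The only genuine methodological difference is in part (d). The paper computes the Hessian explicitly in $m$-coordinates,
\[
G''_{\alpha,\beta,t}(m)_{a,b} = \beta\,1_{a=b} - \beta^2\sum_c \alpha_c\,\partial_{M_b}\Gamma_{c,a}(\beta m,t),
\]
and uses that $\sum_a \partial_{M_b}\Gamma_{c,a}=0$ (the rows of $\Gamma$ are probability vectors) to deduce $G''\partial_z = \mathbf{1}$, then invokes the orthogonality of the $(x,y,z)$ coordinate basis. You instead extract the same information from the translation identity $G_{\alpha,\beta,t}(m+c\mathbf{1}) = G_{\alpha,\beta,t}(m) + c\beta(\sum_i m_i - 1) + \tfrac{3\beta}{2}c^2$, differentiating to get $\mathrm{Hess}(G)\mathbf{1}=\beta\mathbf{1}$ without ever writing down the full Hessian. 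Your argument is a bit more conceptual and avoids the $\Gamma$-derivative bookkeeping; the paper's version has the minor advantage that the explicit Hessian formula is reused later (e.g.\ in the SCE and B2B computations). Note also that your justification ``the $x$- and $y$-axes map into $\mathbf{1}^\perp$'' is correct, but the reason is that the \emph{third row} of $\varphi_\beta$ is proportional to $\mathbf{1}^\top$ (so $\varphi_\beta^{-1}\{z=0\}=\mathbf{1}^\perp$), not merely that it is; you state the right fact but could make the implication more explicit.
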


Before we present the proof, let us stress the importance of this
proposition.  The matrix \(\Gamma\) naturally appears in the
derivatives of \(G_{\alpha, \beta, t}\) and has the two important
properties: Firstly, the rows of \(\Gamma\) are probability vectors
and secondly the map \(M \mapsto \Gamma(M, t)\) is compatible with the
symmetry of the model.  The fact that the catastrophe manifold is
given as a graph allows us to write the bifurcation set as the set of
\((\chi(m, \beta, t), \beta, t)\) such that
\begin{equation}
  \det G_{\chi(m, \beta, t), \beta, t}''(m) = 0
\end{equation}
with \((m, \beta, t) \in H \times (0, \infty) \times (0, \infty)\).
We can therefore take the same point of view as in the static case
\parencite[cf.][Lemma~3]{KuMe20}: We study the zeros of the
Hessian determinant as a function of \(m\) with \(\beta\) and \(t\)
fixed.  This is a two-dimensional problem since we only have to
consider points in the unit simplex~\(\Delta^2\).  Additionally,
\(\Delta^2\) is bounded so that we can simply compute the zeros of the
Hessian determinant numerically on a discretization of \(\Delta^2\) as
accurately as we want to.  In this way we can get insight into the
global shape of the bifurcation set.  It is convenient to look at this
set as composed of the \emph{bifurcation set slices} \(B(\beta, t)\),
that is, the subsets for which the parameter \((\beta, t)\) is fixed.
Figure~\ref{fig:crit_point_crit_values} shows an example of the zeros
of the Hessian determinant together with the respective image under
the map \(\chi(\cdot, \beta, t)\) for a fixed pair \((\beta, t)\).  We
now continue with the proof of the above proposition.

\begin{figure}
  \centering
  \includegraphics{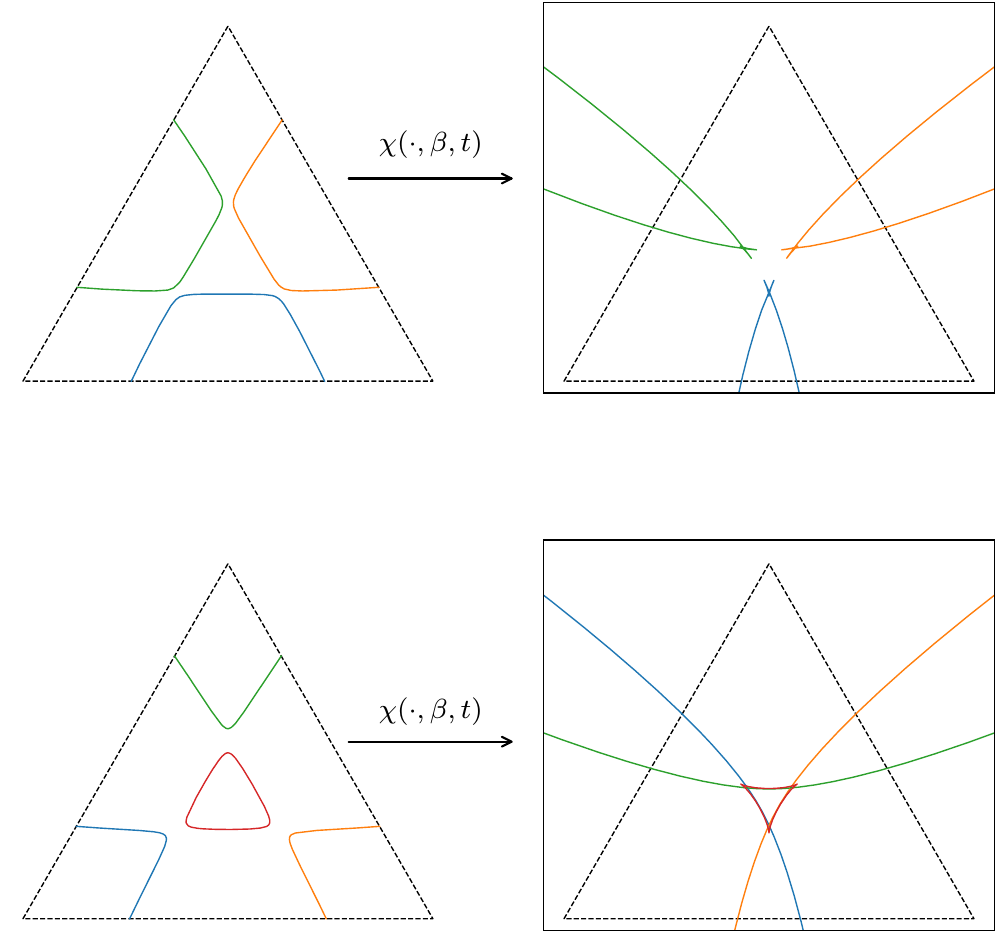}
  \caption{The left column shows the solutions to the degeneracy
    condition~\eqref{eq:elem_sing:deg_cond} for \(\beta = 2.755\),
    \(g_t = 0.5\) (above) and \(g_t = 0.45\) (below) computed using a
    uniform triangular grid.  The right column shows the image of the
    solutions under the catastrophe map \(\chi(\cdot, \beta, t)\)
    restricted to a square.  The branches of the degenerate points on
    the left and their corresponding images under
    \(\chi(\cdot, \beta, t)\) on the right are marked with the same
    color.  Note that despite the fact that the degenerate stationary
    points in the left plot lie inside of \(\Delta^2\) in the right
    plot we see that parts of the bifurcation set slice lie outside of
    the simplex.  This is a major difference to the static case.}
  \label{fig:crit_point_crit_values}
\end{figure}

\begin{proof}[Proof of Proposition~\ref{prop:sing_summary}]
  Let us prove the claims in increasing order.  Fix arbitrary
  \(M \in \mathbb{R}^3\) and positive \(t\).  The following equation
  proves \eqref{eq:elem_sing:gamma_symmetry}.
  \begin{equation}
    \Gamma_{b, a}(\rho M, t) = \frac{e^{M_{\rho(a)} + g_t 1_{b=a}}}{\sum\limits_{c=1}^3 e^{M_c + g_t 1_{b=c}}} = \frac{e^{M_{\rho(a)} + g_t 1_{\rho(b)=\rho(a)}}}{\sum\limits_{c=1}^3 e^{M_c + g_t 1_{\rho(b)=c}}} = \Gamma_{\rho(b), \rho(a)}(M, t)
  \end{equation}

  We proceed with the second point.  Note that the matrix
  \(\Gamma(M, t)\) can be written as the product \(D E\) of the
  diagonal matrix \(D = (D_{a,b})\) with entries
  \begin{equation}
    \frac{1_{a=b}}{\sum_{c=1}^3 e^{M_c + g_t 1_{c=b}}}
  \end{equation}
  for \(a, b \in \{1, 2, 3\}\) and the matrix
  \begin{equation}
    E = \begin{pmatrix}
      e^{M_1 + g_t} & e^{M_2} & e^{M_3} \\
      e^{M_1} & e^{M_2 + g_t} & e^{M_3} \\
      e^{M_1} & e^{M_2} & e^{M_3 + g_t}
    \end{pmatrix}.
  \end{equation}
  Since \(\det\Gamma(M, t) = \det(D) \cdot \det(E)\) and the
  determinant of \(D\) is clearly positive, we have to check that
  \(\det(E)\) is positive to see that \(\Gamma(M, t)\) is in the
  general linear group.  We find that the determinant of \(E\) is
  given by
  \begin{equation}
    \det(E) = e^{M_1 + M_2 + M_3}(e^{3g_t} - 3e^{g_t} + 2)
  \end{equation}
  which is clearly positive for all positive \(g_t\).

  To prove the formula for the inverse, let \(a,b\text{ and }d\) be
  pairwise different elements of \(\{1,2,3\}\).  Substituting the
  right-hand sides of
  (\ref{eq:gamma_inverse_diag}--\ref{eq:gamma_inverse_offdiag}), we
  have the following
  \begin{align*}
    \Gamma_{b,a}\Gamma^{-1}_{a,a} &= \frac{e^{g_t}+1}{ e^{2g_t} + e^{g_t} -2 } \frac{ \sum_c e^{M_c + g_t1_{c = a}}}{\sum_c e^{M_c + g_t1_{c = b}}}
    \\
    \Gamma_{b,b}\Gamma^{-1}_{b,a} &= \frac{-e^{g_t}}{ e^{2g_t} + e^{g_t} -2 } \frac{ \sum_c e^{M_c + g_t1_{c = a}}}{\sum_c e^{M_c + g_t1_{c = b}}}
    \\
    \Gamma_{b,d}\Gamma^{-1}_{d,a} &= \frac{-1}{ e^{2g_t} + e^{g_t} -2 } \frac{ \sum_c e^{M_c + g_t1_{c = a}}}{\sum_c e^{M_c + g_t1_{c = b}}}
    \\
    \Gamma_{a,a}\Gamma^{-1}_{a,a} &= \frac{(e^{g_t}+1)e^{g_t}}{ e^{2g_t} + e^{g_t} -2 }
    \\
    2\Gamma_{a,d}\Gamma^{-1}_{d,a} &= \frac{-2}{ e^{2g_t} + e^{g_t} -2 }
  \end{align*}
  Adding the right-hand sides of the first three equations yields zero
  and adding those of the last two gives one. This proves the formula
  for the inverse.

  We now prove that the catastrophe manifold is the graph of \(\chi\).
  First, let us check that the range of \(\chi\) is indeed the
  hyperplane \(H\).  Take an arbitrary point \((m, \beta, t)\) in
  \(H \times (0, \infty) \times (0, \infty)\) and let
  \(\alpha = \chi(m, \beta, t)\).
  \begin{equation}
    \sum_{b=1}^3 \alpha_b = \sum_{b=1}^3 \sum_{a=1}^3 m_a \Gamma^{-1}_{a, b}(\beta m, t)
  \end{equation}
  Since \((1, 1, 1)^{\mathrm{T}}\) is an eigenvector of
  \(\Gamma(\beta m, t)\) for the eigenvalue 1, it is also an
  eigenvector of \(\Gamma^{-1}(\beta m, t)\) for the same eigenvalue.
  Therefore, we find
  \begin{equation}
    \sum_{b=1}^3 \alpha_b = \sum_{a=1}^3 m_a = 1,
  \end{equation}
  so \(\alpha\) is an element of \(H\).  Next, we show that the
  catastrophe manifold is the graph of \(\chi\).  The differential of
  \(G_{\alpha, \beta, t}\) is given by
  \begin{equation}
    G_{\alpha,\beta, t}'(m) = \beta\left( m_a - \sum_{b=1}^3 \alpha_b \Gamma_{b,a}(\beta m, t) \right)_{a=1}^3
  \end{equation}
  Since \(\Gamma(\beta m, t)\) is invertible, the equation
  \(G_{\alpha, \beta, t}'(m) = 0\) can be solved for \(\alpha\) and we
  find \(\alpha = \chi(m, \beta, t)\).  Assume \(\alpha\) is in
  \(\Delta^2\), then \(G_{\alpha, \beta, t}'(m) = 0\) implies that
  \(m\) also lies in \(\Delta^2\) since
  \(0 < \Gamma_{b,a}(\beta m, t) < 1\) for all \(b, a\) in
  \(\{1, 2, 3\}\).

  To show \eqref{eq:hessian_matrix} and \eqref{eq:elem_sing:deg_cond}
  observe that the second derivative of \(G_{\alpha, \beta, t}\) is
  given by the matrix
  \begin{equation}
    \begin{split}
      G_{\alpha, \beta, t}''(m) &= \beta\left( 1_{a=b} - \beta
        \sum_{c=1}^3 \alpha_c \pdv{\Gamma_{c, a}}{M_b}
      \right)_{b,a=1}^3
      \\
      &= \beta\left( 1_{a=b} - \beta \sum_{c=1}^3 \alpha_c \Gamma_{c,
          a} \Big(1_{a=b} - \Gamma_{c, b} \Big)
      \right)_{b,a=1}^3
    \end{split}
  \end{equation}
  where \(\Gamma = \Gamma(\beta m, t)\).  The partial derivatives of
  \(\Gamma_{c, a}\) are elements of the tangent space of \(\Delta^2\)
  for every \(c\) in \(\{1, 2, 3\}\), that is, summing over \(a\)
  yields zero.  Therefore:
  \begin{equation}
    \left\langle h, G_{\alpha, \beta, t}''(m) \pdv{}{z}\right\rangle =
    \beta \sum_{b, a=1}^3 1_{a=b} h_b \left(\pdv{}{z}\right)_a =
    \sum_a h_a
  \end{equation}
  Since the coordinate basis of the \((x, y, z)\)-chart is an
  orthogonal basis, we find
  \(\langle \pdv{}{x}, G_{\alpha, \beta, t}''(m) \pdv{}{z}\rangle =
  \langle \pdv{}{y}, G_{\alpha, \beta, t}''(m) \pdv{}{z}\rangle = 0\)
  and
  \(\langle \pdv{}{z}, G_{\alpha, \beta, t}''(m) \pdv{}{z}\rangle =
  \frac{3}{\beta}\).  Since \(\beta > 0\) and
  \(\alpha = \chi(m, \beta, t)\), the condition for degenerate
  stationary points \(\det G_{\alpha, \beta, t}''(m) = 0\) is
  equivalent to equation~\eqref{eq:elem_sing:deg_cond}.
\end{proof}

\subsection{Universality hypothesis connecting the mid-range dynamical
  model with the static model}

In our work we are guided by the following \textit{universality
  hypothesis}, which provides a useful organizing principle to
understand the transitions which appear.  It is suggested by the
universality seen in local bifurcation theory, and verified for our
model in the full set of mid-range temperatures \(\beta < 3\), by
means of our analytical treatment in the sequel of the paper, aided in
some parts by computer algebra and numerics.

There exists a map from the two parameters temperature and time of the
dynamical model to one effective temperature parameter of the static
model of the form
\begin{equation}
  (\beta, t)\mapsto \beta_{\mathrm{st}}(\beta, t)
\end{equation}
which for our model is defined on the whole subset
$\{(\beta, t) \:|\: 0 < \beta < 3, t > 0\}$ of the positive quadrant
(and not only locally) and this map has the following property.

At fixed $(\beta, t)$ the bifurcation set slice
$B(\beta,t)\subset \Delta^2$, in the space of endconditionings $\alpha$
for the dynamical model, is diffeomorphic to a subset of the
corresponding bifurcation set slice
$B_{\mathrm{st}}(\beta_{\mathrm{st}})\subset \Delta^2$ of the
static model under a smooth $(\beta, t)$-dependent map
\begin{equation}
\Delta^2\ni\alpha\mapsto \alpha_{\mathrm{st}}(\alpha, \beta, t).
\end{equation}
See \cite[][Figure~2, page~973]{KuMe20} for nine prototypical examples
of such slices for the static model.  Moreover the corresponding
Maxwell sets of the dynamical and the static model get mapped onto each other
by the same diffeomorphism.  For corresponding values of
$(\alpha, \beta, t)$ for the dynamical model and
$(\beta_{\mathrm{st}},\alpha_{\mathrm{st}})$ the structure of
stationary points of the rate functionals of the dynamical and the
static model is identical.  The image of $\Delta^2$ under
$\alpha_{\mathrm{st}}(\cdot, \beta, t)$, which we call the
\emph{effective observation window}, always contains the uniform
distribution.  However, it may be much smaller than $\Delta^2$ for
some parameter values.  In fact, this will happen as
$t \uparrow \infty$, as we will see.  The map
$\beta_{\mathrm{st}}(\beta, t)$ from dynamical to static parameters is
(only) uniquely defined on the critical lines EW, B2B and BU of the
dynamical model (see Figure~\ref{fig:dynamical_phase_diagram}) which
get mapped to the corresponding static values
\(\beta_{\mathrm{st}}= 4\log 2\),
\(\beta_{\mathrm{st}} = \frac{8}{3}\), and
\(\beta_{\mathrm{st}} = \frac{18}{7}\)
\parencite[see][Table~1]{KuMe20}.

The following conjecture underlies this hypothesis, as it expresses
the structural similarity of dynamical and static rate functional, by
means of a parameter-dependent map acting on the state space
$\Delta^2$, compare with the definition of equivalent potentials in
\cite[][Chapter~6, Section~1]{PoSt78}.

\begin{conjecture}
  There exists a set \(U\) which contains the unit simplex
  \(\Delta^2\) and is open in the hyperplane \(H\) such that
  \begin{enumerate}
  \item there exists a smooth map \(\psi_1\) from the subset
    \begin{equation}
      D = \{ (\alpha, \beta, t) \:|\: \beta < 3, t > 0, \alpha \in U\}
    \end{equation}
    of the parameter space of the time-evolved model to the parameter
    space \((0, \infty)\times\Delta^2\) of the static model such that
    the map
    \((\alpha, \beta) \mapsto \psi_1(\alpha, \beta, t_0)\) is a
    diffeomorphism from \(D \cap \{(\alpha, \beta, t) : t = t_0\}\) to
    the respective image of this intersection for every \(t_0>0\).
  \item there exists a smooth map \(\psi_2\) from \(D\times \Delta^2\)
    to the state space \(\Delta^2\) of the static model such that
    the map
    \(m \mapsto \psi_2(\alpha, \beta, t, m)\) defined on the interior
    of \(\Delta^2\) is a diffeomorphism onto its image  for
    every \((\alpha, \beta, t)\) in \(D\).
  \item For every \((\alpha, \beta, t)\) in \(D\) and every \(m\) in
    \(\Delta^2\) the following identity holds:
    \begin{equation}
      G_{\alpha, \beta, t}(m) = f_{\psi_1(\alpha, \beta, t)}\circ \psi_2(\alpha, \beta, t, m)
    \end{equation}
    where \(f_{\beta, \alpha}\) denotes the potential (5) of the
    static model \parencite[see][Sect. 1.2]{KuMe20}.
  \item There exists a function
    \((\beta, t) \mapsto \beta_{\mathrm{st}}(\beta, t)\) on
    \((0, 3) \times (0, \infty)\) such that
    \begin{equation}
      \mathrm{pr}_1\circ\psi_1(\alpha, \beta, t) = \beta_{\mathrm{st}}(\beta, t)
    \end{equation}
    where \(\mathrm{pr}_1\) denotes the projection
    \((0, \infty)\times\Delta^2 \to (0, \infty)\).  In other words,
    the effective static inverse temperature
    \(\beta_{\mathrm{st}}\) does not depend on the dynamical
    \(\alpha\).
  \end{enumerate}
\end{conjecture}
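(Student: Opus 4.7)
The plan is to construct the maps $\psi_1$ and $\psi_2$ by matching the stationary-point data of the dynamical and static rate functions, guided by the graph structure of the catastrophe manifold established in Proposition~\ref{prop:sing_summary}. The idea is to first fix $\beta_{\mathrm{st}}$ from an intrinsic, $\alpha$-free feature of $G_{\alpha,\beta,t}$, then to define $\psi_2$ as the diffeomorphism on state space that transports stationary points of $G_{\alpha,\beta,t}$ to those of $f_{\beta_{\mathrm{st}},\alpha_{\mathrm{st}}}$, and finally to read off $\alpha_{\mathrm{st}}$ from the static catastrophe map. Once this is done, the functional identity in (c) is verified by showing that the gradients match after substitution, so that $G_{\alpha,\beta,t}$ and $f_{\psi_1}\circ\psi_2$ differ at most by a constant which is killed by normalization at the central symmetric point.

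Concretely, I would first use the $S_3$-symmetry: both rate functions admit the symmetric critical point $m_c=(1/3,1/3,1/3)$ together with the symmetric conditioning $\alpha_c=m_c$. At this distinguished point, by the block structure \eqref{eq:hessian_matrix}, the Hessian of $G_{\alpha_c,\beta,t}$ reduces to a scalar multiple of the identity on the two-dimensional tangent space of $\Delta^2$; call this scalar $\lambda(\beta,t)$, computable from formulas \eqref{eq:gamma_inverse_diag}--\eqref{eq:gamma_inverse_offdiag}. The analogous quantity for the static model is a strictly decreasing function $\lambda_{\mathrm{st}}(\beta_{\mathrm{st}})$, vanishing precisely at $\beta_{\mathrm{st}}=3$. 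Defining $\beta_{\mathrm{st}}(\beta,t)$ by $\lambda_{\mathrm{st}}(\beta_{\mathrm{st}}) = \lambda(\beta,t)$ gives a smooth, $\alpha$-independent candidate for (d); the boundary values $\beta_{\mathrm{st}}\in\{4\log 2,\,8/3,\,18/7\}$ on the curves EW, B2B, BU must then be verified by a direct computation, since these curves are characterized by the vanishing of specific higher-order derivatives at the symmetric point (umbilic, beak-to-beak, butterfly conditions), and by universality of those conditions across the two models.

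With $\beta_{\mathrm{st}}$ in hand, I would construct $\psi_2(\alpha,\beta,t,\cdot)$ by solving the ODE-type relation $\nabla G_{\alpha,\beta,t}(m)=(D\psi_2)^{\mathrm{T}}\nabla f_{\beta_{\mathrm{st}},\alpha_{\mathrm{st}}}(\psi_2(m))$, starting from the symmetric critical point, where both Hessians agree by construction, and extending by the Morse lemma on regular strata and by Thom's splitting lemma near each corank-one or corank-two singularity; the overall transformation is pieced together using the fact, from Proposition~\ref{prop:sing_summary}, that the bifurcation set slices are the image under $\chi$ of a two-dimensional family of degenerate critical points. The map $\alpha_{\mathrm{st}}=\mathrm{pr}_2\circ\psi_1$ is then simply $\alpha_{\mathrm{st}}(\alpha,\beta,t)=\chi_{\mathrm{st}}(\psi_2(\alpha,\beta,t,m(\alpha,\beta,t)),\beta_{\mathrm{st}}(\beta,t))$ evaluated at a suitable reference critical point, smoothness following from the implicit function theorem where the reference Hessian is non-degenerate. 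The open set $U\supset\Delta^2$ arises because the construction naturally extends to the full hyperplane $H$ as long as stationary points remain in a compact region, which holds in a neighborhood of $\Delta^2$ by continuity.

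The main obstacle is clause (d), the $\alpha$-independence of $\beta_{\mathrm{st}}$. Integrability of the gradient-matching relation and existence of \emph{some} diffeomorphism on state space for each fixed $(\alpha,\beta,t)$ is not hard, but a priori the effective static temperature required for matching could depend on $\alpha$, in which case the static and dynamical families of potentials would only be fibre-wise equivalent, not related by a pullback with a single temperature reparametrization. Overcoming this requires showing that all of the higher-order Taylor coefficients at the symmetric center (which control the qualitative bifurcation behaviour in the sense of Poston--Stewart, Chapter~6) are simultaneously matched by the \emph{single} function $\beta_{\mathrm{st}}(\beta,t)$ fixed above; I would attempt this by an inductive argument on the order of the jet, using the rigidity afforded by the $S_3$-symmetry to reduce each matching condition to a polynomial identity in $(\beta,g_t)$ that follows from the definition of $\beta_{\mathrm{st}}$. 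A secondary difficulty is the possible breakdown of $\psi_2$ at the boundary of $\Delta^2$, which is addressed by working in the larger set $U$ and using the properness of the gradient of $G_{\alpha,\beta,t}$ on compact subsets of $H$.
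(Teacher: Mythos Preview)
The statement you are attempting to prove is presented in the paper as a \emph{conjecture}, not a theorem; the authors explicitly write that they ``are not going to construct the maps $\psi_1$ and $\psi_2$'' and instead verify the critical lines directly from the dynamical potential, offering only graphical evidence (comparison of bifurcation-set slices and contour plots) for the conjecture itself. There is therefore no proof in the paper to compare against.

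Your sketch contains a genuine gap at the local-to-global step. The tools you invoke---the Morse lemma and Thom's splitting lemma---produce, for each fixed $(\alpha,\beta,t)$ and each critical point of $G_{\alpha,\beta,t}$, a \emph{local} diffeomorphism bringing the germ to normal form. The conjecture, however, demands a single diffeomorphism $\psi_2(\alpha,\beta,t,\cdot)$ defined on the whole interior of $\Delta^2$ such that the identity $G_{\alpha,\beta,t}=f_{\psi_1}\circ\psi_2$ holds globally. Patching local normal-form charts into one global diffeomorphism is not automatic: the local charts are only determined up to right-equivalence, there is no canonical choice, and the transition maps between overlapping charts need not agree. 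Your proposed ``ODE-type relation'' $\nabla G=(D\psi_2)^{\mathrm T}\,\nabla f\circ\psi_2$ is in fact an overdetermined first-order PDE system for $\psi_2$ in two state variables, and its integrability conditions are precisely what is at stake; the assertion that ``integrability\ldots is not hard'' is unsupported.

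The second gap concerns clause~(d). Matching the Hessian eigenvalue at the symmetric center fixes a candidate $\beta_{\mathrm{st}}(\beta,t)$, but this single scalar constraint does not force agreement of higher-order Taylor coefficients of $G_{\alpha,\beta,t}$ and $f_{\beta_{\mathrm{st}},\alpha_{\mathrm{st}}}$ at the center, let alone away from it. Your ``inductive argument on the order of the jet, using the rigidity afforded by the $S_3$-symmetry'' is not an argument: at each order the $S_3$-invariant jet space has positive dimension, and you give no mechanism by which the one equation defining $\beta_{\mathrm{st}}$ propagates to kill all of these. Indeed, if such a reduction were routine, the authors would presumably not have left the statement as a conjecture.
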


A comparison of Figure~\ref{fig:tpe:contours} with
\cite[][Figure~5]{KuMe20} gives evidence for the existence of the
map \(\psi_1\) as the bifurcation set slice of the static model looks
structurally similar to the bifurcation set slice in a neighbourhood
of the unit simplex of the dynamical model.  The contour plots in the
rightmost plots of the two figures support the existence of the map
\(\psi_2\) as the contour plot of the dynamical potential
\(G_{\alpha, \beta, t}\) looks structurally similar to a subset of the
contour plot of the static
potential~\(f_{\beta_{\mathrm{st}}(\beta, t),
  \alpha_{\mathrm{st}}(\alpha, \beta, t)}\).  Note, however, that we
are not going to construct the maps \(\psi_1\) and \(\psi_2\) in the
following sections of the paper and we do not need to do it.  Instead,
we explicitly compute the critical lines from the dynamical potential
following the ideas of singularity theory.  This means that the lines
can be found independently of the construction of the maps \(\psi_1\)
and \(\psi_2\).  The behavior of the model in the vicinity of these
lines follows from Thom's classification theorem
\parencite[see][Section~5 of Chapter~3]{Lu76} and our global analysis
is supported by the global numerical analysis of the relevant parts of
the dynamical bifurcation set.  In the following sections we now
proceed with the discussion of the critical lines.

\subsection{The symmetric cusp exit (SCE) line and the non-Gibbs
  temperature}

The non-Gibbs inverse temperature \(\beta_{\mathrm{NG}}\) is defined
as the supremum of all \(\beta\) such that \(\mu_{n, \beta, t}\) is
sequentially Gibbsian for all positive \(t\).  It turns out to be a
maximum.  As the type of transitions of the dynamical model for
mid-range temperatures can be understood in terms of the static case,
let us remark that in the static Potts model the first type of bad
magnetic fields that show up with increasing \(\beta\) are due to
three symmetric cusp singularities, the \enquote{rockets}
\parencite[see][Figures~2 and~4]{KuMe20} and that there are
no bad magnetic fields for \(\beta \le 2\).  Therefore, in the
dynamical model, we look for symmetric cusp points that have just
passed the simplex edges in their midpoint and moved outside, which
leads us to the \emph{symmetric cusp exit line} in the dynamical phase
diagram.  Without loss of generality by symmetry we consider the
simplex edge where \(\alpha_1 = 0\).

\begin{proposition}
  \label{prop:sce}
  Fix any positive \(\beta\) and \(t\), let \(m\) be a point in \(H\)
  with \((x, y, z)\)-coordinates \((0, y, 0)\).
  \begin{enumerate}
\item The point \(\alpha = \chi(m, \beta, t)\) in \(H\) is a symmetric
        cusp point on the simplex edge if and only if
  \begin{align}
    \frac{6}{\beta} y + \frac{e^{g_t} + 1 - 2e^{3y}}{e^{g_t} + 1 + e^{3y}} &= 0 \label{eq:sce:mf}\\
    \frac{6}{\beta} + \frac{3(e^{g_t} - 1)^2}{(e^{g_t} + 1 + e^{3y})^2} - \frac{3(e^{g_t} + 1)}{e^{g_t} + 1 + e^{3y}} &= 0. \label{eq:sce:deg}
  \end{align}

\item The solutions of the system (\ref{eq:sce:mf}--\ref{eq:sce:deg})
        can be explicitly parametrized in the form
  \begin{align}
    \beta &= \frac{2s (2e^s + F(s))}{4e^s - F(s)} \label{eq:sce:beta}\\
    g_t &= \log(\frac{1}{2} F(s) - 1) \label{eq:sce:g_t}
  \end{align}
  where
  \begin{equation}
    \label{eq:sce:helper}
    F(s) = -(s-1)e^s - 4s + \sqrt{
      ((s - 1)e^s + 4s)^2 + 8(2s + e^{2s})}.
  \end{equation}
  for \(s < 0\).
\item The non-Gibbs temperature is given via
  \begin{equation}
    \beta_{\mathrm{NG}} = \frac{2s_0 (2e^{s_0} + F(s_0))}{4e^{s_0} - F(s_0)} \approx 2.52885
  \end{equation}
  where \(s_0\) is the unique zero in \((-\infty, 0)\) of
  \begin{equation}
    \label{eq:ng:zero-problem}
    \begin{split}
      \frac{
        64 \, s^{3} + 64 \, s^{2}
        + s(s^2 + s + 6) e^{3 \, s}
        + 4 \, s(5 \, s + 6) e^{2 \, s}
        - 8 \, s(2 \, s - 3) e^{s}
      }{
        \sqrt{
          ((s - 1)e^s + 4s)^2 + 8(2s + e^{2s})
        }
      }
      \\
      - 16 \, s^{2} - s(s + 2) e^{2 \, s}
      + 4 \, s(s - 2) e^{s} - 8 \, s.
    \end{split}
  \end{equation}
\end{enumerate}

\end{proposition}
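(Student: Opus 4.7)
For \textbf{(a)}, I would exploit the $2 \leftrightarrow 3$ reflection symmetry. A symmetric cusp on the edge $\{\alpha_1 = 0\}$ is fixed by this reflection, so necessarily $\alpha = (0, 1/2, 1/2)$ and any associated stationary point $m$ satisfies $m_2 = m_3$, placing $m$ in the form $(x, y, z) = (0, y, 0)$. With $M_1 - M_2 = 3y$, inserting the inverse formulas \eqref{eq:gamma_inverse_diag}--\eqref{eq:gamma_inverse_offdiag} into the catastrophe condition $\chi_1(m, \beta, t) = \sum_a m_a \Gamma^{-1}_{a,1}(\beta m, t) = 0$ is straightforward and collapses to \eqref{eq:sce:mf}. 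For the degeneracy condition, Proposition~\ref{prop:sing_summary} already gives the $z$-eigenvalue $3/\beta$, and the reflection symmetry additionally forces $G''_{xy} = 0$ at a symmetric point, so the $(x,y)$-block is diagonal. A symmetric cusp is the pitchfork-type $A_3$ singularity whose degenerate eigendirection is transverse to the symmetry axis, so the relevant condition is $G''_{xx} = 0$; computing $G''_{xx}$ from the closed form $G''_{a,b} = \beta \delta_{a,b} - \beta^2 \sum_c \alpha_c \Gamma_{c,a}(\delta_{a,b} - \Gamma_{c,b})$ with $m_2 = m_3$, $\alpha_1 = 0$, $\alpha_2 = \alpha_3 = 1/2$ delivers exactly \eqref{eq:sce:deg}.

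For \textbf{(b)}, I would substitute $A = e^{g_t} + 1$, $B = e^{3y}$, and $s = 3y$. Then \eqref{eq:sce:mf} rewrites as $\beta = 2s(A + B)/(2B - A)$ and \eqref{eq:sce:deg} as $\beta = 2(A+B)^2/(AB + 4A - 4)$. Eliminating $\beta$ and clearing denominators produces the quadratic
\[
A^2 + \bigl[(s - 1) e^s + 4 s\bigr] A - \bigl(2 e^{2 s} + 4 s\bigr) = 0.
\]
Since $A = e^{g_t} + 1 > 1$, the larger root is selected, and it equals $F(s)/2$, which is \eqref{eq:sce:g_t}; inserting this back into the first expression for $\beta$ yields \eqref{eq:sce:beta}. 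The restriction $s < 0$ is geometric: the cusp we track approaches the edge midpoint from the interior, which corresponds to $y < 0$.

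For \textbf{(c)}, the non-Gibbs temperature is $\beta_{\mathrm{NG}} = \inf_{s < 0} \beta(s)$ along the parametric SCE curve. Asymptotic analysis of $F$ (using $e^s \to 0$ as $s \to -\infty$ and $e^s \to 1$ as $s \uparrow 0$) shows $\beta(s) \to \infty$ at both endpoints, so the infimum is attained at an interior critical point $s_0$ with $\beta'(s_0) = 0$. I would differentiate \eqref{eq:sce:beta} while using the implicit identity $F(s) + (s-1)e^s + 4s = \sqrt{((s-1)e^s + 4s)^2 + 8(2s + e^{2s})}$ (equivalently, the quadratic that defines $F$) to express $F'(s)$ and to eliminate one square root, then clear denominators. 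Rearranging yields precisely \eqref{eq:ng:zero-problem}. The numerical value $\beta_{\mathrm{NG}} \approx 2.52885$ and the uniqueness of $s_0$ in $(-\infty, 0)$ are then verified by inspection of the function in \eqref{eq:ng:zero-problem}. The main obstacle is exactly this last step: $\beta'(s)$ is an unwieldy rational expression in $s$, $e^s$, and $\sqrt{\,\cdot\,}$, and simplification to the compact form \eqref{eq:ng:zero-problem} is where the computer-algebra tools flagged in the introduction become decisive.
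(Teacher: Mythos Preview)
Your proposal follows the paper's route closely. For (a) you derive \eqref{eq:sce:mf} via $\chi_1=0$, whereas the paper computes $\partial_y G=0$ with $\alpha=(0,\tfrac12,\tfrac12)$ plugged in; these are equivalent formulations of the same stationarity condition. For the degeneracy you invoke the pitchfork geometry to single out the $x$-direction, while the paper instead computes $\partial^2_y G$ explicitly and verifies it is positive for $\beta<\tfrac83$, so that only $\partial^2_x G=0$ survives. Parts (b) and (c) proceed exactly as in the paper: eliminate $\beta$ between the two equations, solve the resulting quadratic in $w=e^{g_t}+1$, and then differentiate $\beta(s)$.

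Two small corrections. In (b), the selection criterion $A>1$ is too weak: for $s\approx -2$ the smaller root of your quadratic already exceeds $1$. The sharp constraint is $A=e^{g_t}+1>2$ (since $g_t>0$), which is what the paper uses. In (c), your endpoint claim at $s\to 0^-$ is incorrect: a Taylor expansion gives $F(s)=4+O(s^2)$ and $4e^s-F(s)=4s+O(s^2)$, so $\beta(s)\to 3$, not $\infty$. Your conclusion that the infimum is attained at an interior critical point still stands, since $3>\beta_{\mathrm{NG}}$ and $\beta(s)\to\infty$ as $s\to-\infty$; the paper itself does not justify this step at all, so once corrected your argument is actually more complete than the original.
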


\begin{proof}
  Let us first prove item 1.  A symmetric cusp point \(\alpha\) is the
  image of a symmetric degenerate stationary point \(m\) under the map
  \(\chi(\cdot, \beta, t)\) at which the tangent vector of the curve
  of degenerate stationary points (given by vanishing Hessian
  determinant) is parallel to the direction of degeneracy.  The
  partial derivatives of \(G_{\alpha, \beta, t}\) with respect to
  \(x\) and \(z\) vanish at \(m\) because of symmetry, so it is
  sufficient for a stationary point \(m\) to have a vanishing partial
  derivative with respect to the \(y\)-coordinate of \(m\).  Now, for
  the gradient we note that
  \begin{equation}
    \begin{split}
      \pdv{G_{\alpha, \beta, t}}{y} &= 2m_1 - m_2 - m_3 - \sum_{b=1}^3
      \alpha_b (2\Gamma_{b, 1} - \Gamma_{b, 2} - \Gamma_{b, 3})
      \\
      &= \frac{6}{\beta} y - \sum_{b=1}^3 \alpha_b (3\Gamma_{b,1} - 1)
      \\
      &= \frac{6}{\beta} y + 1 - \frac{3e^{3y}}{e^{3y} + e^{g_t} + 1}
    \end{split}
  \end{equation}
  where we have abbreviated
  \(\Gamma_{b,a} = \Gamma_{b,a}(\beta m, t)\) and used the fact that
  \(\alpha\) lies on the simplex edge
  \(\alpha = (0, \frac{1}{2}, \frac{1}{2})\).  This yields
  Equation~\eqref{eq:sce:mf}.

  We will now derive equation~\eqref{eq:sce:deg}.  Note that the
  mixed partial derivative, which appears in the degeneracy
  condition~\eqref{eq:elem_sing:deg_cond}, vanishes at partially
  symmetric points:
  \begin{equation}
    \label{eq:sce:mixed_partial}
    \pdv[2]{G_{\alpha, \beta, t}}{x}{y} = -3 \sum_{b=1}^3 \alpha_b
    \pdv{\Gamma_{b, 1}}{x} = 3\sqrt{3} \sum_{b=1}^3 \alpha_b
    \Gamma_{b, 1}(\Gamma_{b,3} - \Gamma_{b, 2})
  \end{equation}
  Plugging in \(\alpha = (0, \frac{1}{2}, \frac{1}{2})\), the
  right-hand side of the last equality in \eqref{eq:sce:mixed_partial}
  vanishes because
  \(\Gamma_{3, 3} - \Gamma_{3, 2} = \Gamma_{2, 2} - \Gamma_{2, 3}\)
  for points \(m\) which have the partial symmetry \(m_2 = m_3\).
  Therefore the degeneracy condition~\eqref{eq:elem_sing:deg_cond} is
  in product form.  We calculate the remaining partial derivatives:
  \begin{align}
    \label{eq:sce:Gyy}
    \pdv[2]{G_{\alpha, \beta, t}}{y} &= \frac{6}{\beta} - 9(\Gamma_{2,1} - \Gamma_{2,1}^2)
                                       = 9\left(\Gamma_{2,1} - \frac{1}{2}\right)^2 - \frac{9}{4} + \frac{6}{\beta}
    \\
    \label{eq:sce:Gxx}
    \pdv[2]{G_{\alpha, \beta, t}}{x} &=
      \frac{6}{\beta} - 3\Big(\Gamma_{2,2} + \Gamma_{2, 3} - (\Gamma_{2, 3} - \Gamma_{2, 2})^2\Big)
  \end{align}
  The partial derivative \eqref{eq:sce:Gyy} is always positive for
  \(\beta < \frac{8}{3}\).  This means we only have to consider the
  zeros of \eqref{eq:sce:Gxx}.  This yields
  equation~\eqref{eq:sce:deg}.

  We will now explain the parametrization of the set of solutions
  given in 2.  First note that the variable \(\beta\) can be
  eliminated from Equation~(\ref{eq:sce:deg}) using
  Equation~(\ref{eq:sce:mf}) for all \(y \neq 0\).  When we set
  \(w = e^{g_t} + 1\) we find that the resulting equation is a
  quotient of quadratic polynomials in \(w\):
  \begin{align}
    -\frac{
    w^2 + ((3y - 1)e^{3y} + 12y)w - 2(6y + e^{6y})
    }{
    y(w + e^{3y})^2
    } = 0
  \end{align}
  Since \(w > 2\), it suffices to consider the numerator of the
  left-hand side.  The discriminant of this quadratic polynomial is
  given by
  \begin{equation}
    D = ((3y - 1)e^{3y} + 12y)^2 + 8(6y + e^{6y}).
  \end{equation}
  It is positive for all real \(y\).  Therefore, this polynomial has
  two real roots.  Because \(w > 2\), we choose the larger of the two
  solutions
  \begin{equation}
    w = \frac{1}{2}\left( -(3y - 1)e^{3y} - 12y + \sqrt{D} \right) = \frac{1}{2} F(s)
  \end{equation}
  where we have defined \(s = 3y\) and used the definition of \(F(s)\)
  in Equation~(\ref{eq:sce:helper}).  Furthermore, \(F(s) > 4\) for
  \(s\neq 0\) such that Equation~(\ref{eq:sce:g_t}) yields positive
  values for \(g_t\).

  Finally, the non-Gibbs inverse temperature is the minimal value of
  \(\beta\) along the curve given by the
  parametrization~(\ref{eq:sce:beta}--\ref{eq:sce:g_t}).  Therefore we
  calculate the derivative of \eqref{eq:sce:beta} which gives
  \begin{equation}
    \dv{\beta}{s}=
    -2\cdot\frac{2(3s-1)e^sF(s) - 6se^sF'(s) + F^2(s) - 8e^{2s}}{(4e^s - F(s))^2}.
  \end{equation}
  Since \(4e^s - F(s)\) is never zero for any \(s\) in
  \((-\infty, 0)\), we only have to consider the numerator of the
  fraction.  We calculate the derivative of \(F\)
  \begin{equation}
    F'(s) = -se^s - 4 + \frac{((s-1)e^s + 4s)(4 + se^s) + 8(1 + e^{2s})
    }{
      \sqrt{((s - 1)e^s + 4s)^2 + 8(2s + e^{2s})}}.
  \end{equation}
  Plugging everything together, \(\mathrm{d}\beta/\mathrm{d}s = 0\) is
  exactly fulfilled for the zero of the function defined in
  (\ref{eq:ng:zero-problem}).
\end{proof}

\begin{lemma}
  Suppose \(\beta\) lies in the interval \((\beta_{\mathrm{NG}}, 3)\).
  The entry time \(t_{\mathrm{NG}}(\beta)\) into the non-Gibbs region
  is given by
  \begin{equation}
    \label{eq:t_NG}
    t_{\mathrm{NG}}(\beta) = \frac{1}{3} \log
    \frac{2(\beta - 3y)e^{3y} + \beta + 6y}{2((\beta - 3y)e^{3y} - \beta - 6y)}
  \end{equation}
  where \(y\) is the largest root in \((-\frac{\beta}{6}, 0)\) of
  \begin{equation}
    \begin{split}
      \label{eq:t_NG:zero-problem}
      y \mapsto &2 \, \beta^{2} + 24 \, \beta y + 72 \, y^{2} - \left(\beta^{2} + 3 \, \beta y - 18 \, y^{2} - 9 \, \beta\right) e^{6 \, y} \\&- 4 \, {\left(\beta^{2} + 3 \, \beta y - 18 \, y^{2}\right)} e^{3 \, y}.
    \end{split}
  \end{equation}
\end{lemma}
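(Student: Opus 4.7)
The plan is to recognize $t_{\mathrm{NG}}(\beta)$ as the first time at which a symmetric cusp appears at the boundary of the unit simplex, specialize Proposition~\ref{prop:sce} to this entry event, and then perform an explicit algebraic elimination.

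By the $S_3$-symmetry of the dynamical model and the discussion preceding Proposition~\ref{prop:sce}, the first time the bifurcation set slice touches $\Delta^2$ as $t$ increases must do so at the midpoint of one of the simplex edges, say $\alpha = (0, 1/2, 1/2)$. The corresponding degenerate stationary point $m$ of $G_{\alpha,\beta,t_{\mathrm{NG}}}$ then has $(x, y, z)$-coordinates $(0, y, 0)$ for some $y$ in $(-\beta/6, 0)$, and by Proposition~\ref{prop:sce} the pair $(g_{t_{\mathrm{NG}}}, y)$ satisfies both \eqref{eq:sce:mf} and \eqref{eq:sce:deg}.

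I would then eliminate $g_t$. Setting $u = e^{g_t}+1$ and clearing denominators in \eqref{eq:sce:mf} yields a linear equation for $u$ whose unique solution is
\begin{equation*}
  u = \frac{2(\beta-3y)e^{3y}}{\beta+6y}.
\end{equation*}
Substituting $e^{g_t} = u - 1$ into the inverse $t = \tfrac{1}{3}\log\frac{e^{g_t}+2}{e^{g_t}-1}$ of \eqref{eq:def:g_t} and simplifying numerator and denominator separately produces exactly \eqref{eq:t_NG}. Plugging the same $u$ into \eqref{eq:sce:deg} and using the auxiliary identities
\begin{equation*}
  u + e^{3y} = \frac{3\beta e^{3y}}{\beta+6y}, \qquad u - 2 = \frac{2\bigl[(\beta-3y)e^{3y} - (\beta+6y)\bigr]}{\beta+6y},
\end{equation*}
then clearing the strictly positive factor $3\beta^2 e^{6y}/(\beta+6y)^2$ and dividing by two, reduces the degeneracy condition to precisely \eqref{eq:t_NG:zero-problem}.

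The main obstacle is the selection of the correct root. Generically the polynomial \eqref{eq:t_NG:zero-problem} admits two roots in $(-\beta/6, 0)$ that merge at the fold of Proposition~\ref{prop:sce} as $\beta \downarrow \beta_{\mathrm{NG}}$; the interval endpoints are excluded because $y = 0$ yields the trivial fully symmetric stationary point of $G_{\alpha,\beta,t}$, while $y \to -\beta/6$ forces $u \to \infty$ and hence $t \to 0^+$. Identifying the entry branch as the largest such root requires determining which of the two roots corresponds to the cusp first crossing the edge into $\Delta^2$ as $t$ increases: this can be handled by a local analysis on the SCE curve near the fold, using the non-degeneracy established in the proof of Proposition~\ref{prop:sce} (where $d\beta/ds$ vanishes only at $s_0$), supplemented by the global numerical picture of the bifurcation set slices illustrated near Figure~\ref{fig:crit_point_crit_values}. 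The algebraic reduction above is routine once \eqref{eq:sce:mf} and \eqref{eq:sce:deg} are in hand; the true subtlety lies entirely in this branch-selection step.
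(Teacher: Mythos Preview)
Your algebraic reduction is exactly the paper's: solve \eqref{eq:sce:mf} for $w=e^{g_t}+1$ to get $w=2(\beta-3y)e^{3y}/(\beta+6y)$, feed this back into the relation \eqref{eq:def:g_t} to obtain \eqref{eq:t_NG}, and substitute it into \eqref{eq:sce:deg} to produce \eqref{eq:t_NG:zero-problem}. The only point of divergence is the branch-selection step. The paper does not go through a local analysis near the fold of the SCE curve; instead it simply observes that $y\mapsto w(y)$ is monotone on $(-\beta/6,0)$, so that the root corresponding to the extremal value of $g_t$ (and hence to the entry time via the monotone relation between $g_t$ and $t$) is singled out directly. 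Once you have the explicit formula for $w(y)$, differentiating and checking the sign of $w'(y)=6e^{3y}(\beta^2+3\beta y-18y^2-3\beta)/(\beta+6y)^2$ on the interval is quicker than the fold-plus-numerics route you outline, and it makes the choice of root a one-line consequence rather than a separate analytic step.
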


\begin{proof}
  The entry time \(t_{\mathrm{NG}}\) is given by the first entry of
  \emph{rockets} into the unit simplex while increasing the time \(t\)
  and keeping \(\beta\) fixed.  This is because, if the pentagrams
  unfold at all under increase of time, they unfold after the rockets
  have entered the unit simplex~\(\Delta^2\).  This will be clear in
  the next subsection where we compute the butterfly line. So let us
  consider the system~(\ref{eq:sce:mf}--\ref{eq:sce:deg}) and fix any
  positive \(\beta < 3\).  Since the relation~(\ref{eq:def:g_t})
  between \(g_t\) and \(t\) is strictly monotonically decreasing, we
  have to look for the maximal \(g_t\) such that \((\beta, g_t, y)\)
  with negative \(y\) is a solution to the
  system~(\ref{eq:sce:mf}--\ref{eq:sce:deg}), which defines the
  symmetric cusp exit line.  Here, \(y\) is a magnetization-type
  variable.  We can solve Equation~\eqref{eq:sce:mf} for
  \(w = e^{g_t} + 1\) to obtain
  \begin{equation}
    \label{eq:t_NG:w}
    w = \frac{2(\beta - 3y)e^{3y}}{\beta + 6y}.
  \end{equation}
  Plugging this into the left-hand side of the degeneracy
  condition~\eqref{eq:sce:deg}, we arrive at
  \begin{equation}
    \begin{split}
      \frac{2e^{-6y}}{3\beta^2}
      \Big( 2\beta^2 + 24\beta y + 72y^2 &- (\beta^2 +3\beta y -18y^2 - 9\beta)e^{6y}
      \\
      & -4(\beta^2 + 3\beta y - 18 y^2)e^{3y} \Big) = 0.
    \end{split}
  \end{equation}
  This yields the expression of \eqref{eq:t_NG:zero-problem}.  Since
  the right-hand side of \eqref{eq:t_NG:w} is increasing with \(y\),
  we have to pick the largest root of \eqref{eq:t_NG:zero-problem}.
\end{proof}

\begin{figure}
  \centering
  \includegraphics{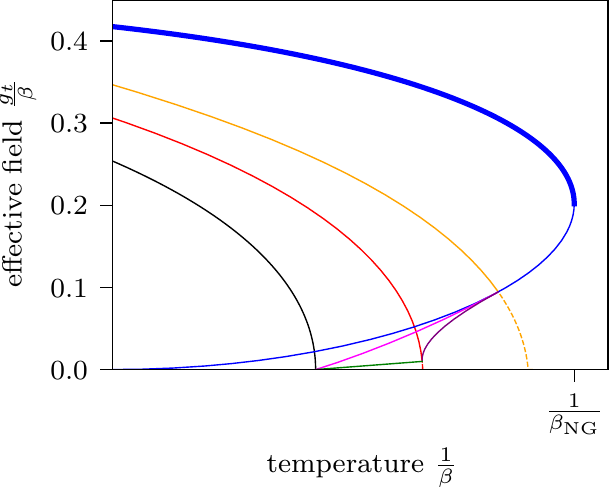}
  \caption{The thick blue line, which ends at the non-Gibbs
    temperature~\(\frac{1}{\beta_{\mathrm{NG}}}\), marks the entry
    time in the dynamical phase diagram.  Time is a monotonically
    decreasing function of \(g_t\) so the first time we hit the
    symmetric cusp exit line when moving on a vertical line of fixed
    temperature corresponds to the entry time.}
  \label{fig:entry_time}
\end{figure}
\FloatBarrier

\subsection{The butterfly unfolding (BU) line and butterfly exit
  temperature}
\label{sec:bu}

The unfolding of the pentagrams is a very important mechanism since it
changes the set of bad empirical measures from straight lines to
Y-shaped, branching curves.  This mechanism is already present in the
static case, however, in contrast to the static case we have to deal
with the fact that in some parameter regions the pentagrams do not
fully lie inside of the unit simplex.  This leads us to the definition
of a \emph{butterfly exit inverse temperature}~\(\beta_{\mathrm{BE}}\)
for which at some point in time \(t > 0\) there is a cusp point on an
edge of the simplex that is about to unfold into a pentagram.  By
definition, \(\beta_{\mathrm{BE}}\) lies between
\(\beta_{\mathrm{NG}}\) and \(\frac{8}{3}\).  The value
\(\frac{8}{3}\) is the first inverse temperature for which a
beak-to-beak scenario inside of the unit simplex appears as we will
see in Section~\ref{sec:b2b_line}.

\begin{proposition}
  \label{prop:be}
  Let \(v(m, \beta, t) = (\varphi_{\beta})_2 \circ \chi(m, \beta, t)\)
  be the parallel coordinate of \(\chi(m, \beta, t)\) and let
  \(\beta(s)\) and \(t(s)\) be given by
  \emph{(\ref{eq:sce:beta}--\ref{eq:sce:g_t})}.  The butterfly exit
  \(\beta_{\mathrm{BE}}\) is given by
  \begin{equation}
    \beta_{\mathrm{BE}} =
    \frac{2s_0 (2e^{s_0} + F(s_0))}{4e^{s_0} - F(s_0)} \approx 2.59590
  \end{equation}
  where \(s_0 < 0\) is the largest zero of
  \begin{equation}
    \label{eq:be:zero_problem}
    s \mapsto \pdv[2]{v}{x}\Big(m(s), \beta(s), t(s)\Big) +
    \pdv{v}{y}\Big(m(s), \beta(s), t(s)\Big) \ddot\gamma_s(0)
  \end{equation}
  and \(\gamma_s\) is the implicit function \(y = \gamma_s(x)\)
  defined in a neighbourhood of \((x, y) = (0, \frac{s}{3})\) by the
  degeneracy condition \eqref{eq:elem_sing:deg_cond}.
\end{proposition}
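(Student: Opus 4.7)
The strategy is to translate the geometric statement---that a symmetric cusp of $\chi(\cdot,\beta,t)$ sitting on a simplex edge is on the verge of unfolding into a pentagram---into an analytic condition along the SCE parametrization, and then to extract $\beta_{\mathrm{BE}}$ as the value of $\beta(s)$ at the largest negative root.

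First, I would recall from Proposition~\ref{prop:sce} that along the SCE line the symmetric cusp pre-image is $m(s) = (0, s/3)$ and $\chi(m(s),\beta(s),t(s))$ equals the midpoint of the opposite simplex edge. The $(2\,3)$-reflection symmetry reads $x\mapsto -x$ in the coordinates of Proposition~\ref{prop:sing_summary}, so the degeneracy condition~\eqref{eq:elem_sing:deg_cond} is an even function of $x$, and the connected branch of degenerate stationary points through $m(s)$ is itself symmetric. The implicit function theorem then yields a smooth, even function $x\mapsto \gamma_s(x)$ with $\gamma_s(0) = s/3$ and $\dot\gamma_s(0) = 0$; the curvature $\ddot\gamma_s(0)$ is obtained by implicit differentiation of~\eqref{eq:elem_sing:deg_cond}.

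Next, I would parametrize the image of this branch under $\chi$ by $x\mapsto \chi(x,\gamma_s(x),\beta(s),t(s))$ and focus on its parallel-to-the-edge coordinate $h_s(x) := v(x,\gamma_s(x))$. By the same reflection symmetry $h_s$ is even in $x$, so $x = 0$ is automatically a critical point of $h_s$; at a generic point of the SCE line this critical point is non-degenerate and the image traces out the tip of a rocket on the edge. The butterfly exit is the codimension-one event at which this critical point becomes degenerate, i.e.\ $h_s''(0) = 0$. The chain rule, together with $\dot\gamma_s(0) = 0$, gives
\begin{equation*}
  h_s''(0) = \pdv[2]{v}{x}\bigl(m(s),\beta(s),t(s)\bigr) + \pdv{v}{y}\bigl(m(s),\beta(s),t(s)\bigr)\,\ddot\gamma_s(0),
\end{equation*}
which is precisely the expression appearing in~\eqref{eq:be:zero_problem}. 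Selecting the largest $s_0 < 0$ with $h_{s_0}''(0) = 0$ and substituting into~\eqref{eq:sce:beta} then yields the announced formula for $\beta_{\mathrm{BE}}$, the numerical value $\approx 2.59590$ following from a one-dimensional root finder.

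The step I expect to be the main obstacle is not the chain-rule manipulation itself, which is routine, but the conceptual justification that $h_s''(0) = 0$ really \emph{is} the correct analytic characterization of the butterfly exit. One has to invoke Thom's classification of elementary catastrophes together with the universality hypothesis of the previous subsection to argue that the local model of $\chi$ around the symmetric cusp on the edge belongs to the $A_4$ (butterfly) family and that $h_s''(0)$ is precisely the modulus whose vanishing corresponds to the first appearance of the additional pair of cusps that turn the rocket into a pentagram crossing the edge. A secondary technical point, best settled by inspection of the global numerical bifurcation set slices for $\beta$ just above $\beta_{\mathrm{BE}}$, is verifying that the \emph{largest} negative root of~\eqref{eq:be:zero_problem} is indeed the relevant one rather than an unrelated coincidence of derivatives occurring at more negative $s$.
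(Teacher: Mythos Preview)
Your proposal is correct and follows essentially the same route as the paper: parametrize the degenerate branch as $y=\gamma_s(x)$, use the $(2\,3)$-symmetry to get $\dot\gamma_s(0)=0$, and identify the butterfly exit with the vanishing of the second derivative of the parallel component $v$ along this branch, computed via the chain rule. The paper's justification for the criterion $h_s''(0)=0$ is simply that the parallel component has a minimum at $x=0$ before the pentagram unfolds and a maximum after, so the sign change forces a zero; it does not invoke Thom's classification at this point.
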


Note that equation~\eqref{eq:be:zero_problem} is explicitly computed
by a computer program because its expression is very complicated.
Nevertheless it is possible to plot the function (see
Figure~\ref{fig:be:zero_problem_plot}).

\begin{figure}
  \centering
  \includegraphics{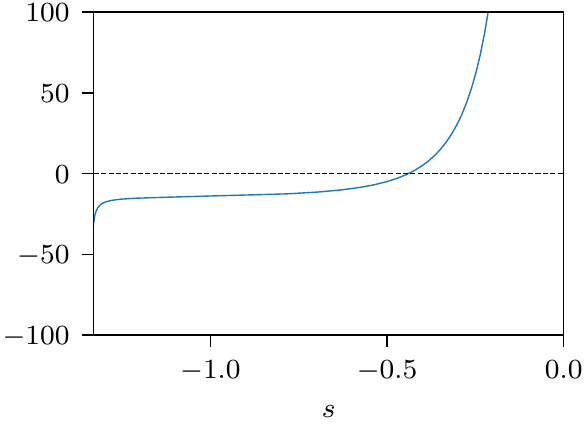}
  \caption{This figure shows a plot of the function
    \eqref{eq:be:zero_problem} which is involved in the expression for
    the \emph{butterfly exit (BE) temperature} in
    Proposition~\ref{prop:be}.}
  \label{fig:be:zero_problem_plot}
\end{figure}

\begin{proof}
  Let us first fix \(\beta\) between \(\beta_{\mathrm{NG}}\) and
  \(\frac{8}{3}\) and a positive \(t\).  Consider a point \(\alpha\)
  on the midpoint of one of the edges of \(\Delta^2\) such that
  \((\alpha, \beta, t)\) belongs to the bifurcation set.  Furthermore,
  without loss of generality by symmetry let us assume that
  \(\alpha_2 = \alpha_3\).  To this point corresponds a degenerate
  stationary point \(m\) that has the same symmetry \(m_2 = m_3\).  We
  can solve the degeneracy condition~\eqref{eq:elem_sing:deg_cond} in
  a neighbourhood of \(m\) in the form \(y = \gamma_{\beta, t}(x)\)
  such that \(\gamma_{\beta, t}(0)\) is the \(y\)-coordinate of \(m\).
  In \(\alpha\)-space in a neighbourhood of
  \(\alpha = \chi(m, \beta, t)\) we can now write the bifurcation set
  as
  \(\chi(\varphi_{\beta}^{-1}(x, \gamma_{\beta, t}(x), 0), \beta,
  t)\).  We know that the parallel component \(v\) of \(\alpha\)
  fulfills
  \begin{equation}
    \dv[2]{}{x} v\qty(\gamma_{s_0}(x), \beta_\ast, t_\ast)  = 0
  \end{equation}
  when we follow the curve \(\gamma_s\) through the bifurcation set.
  This is because it has a minimum before the pentagram unfolds and it
  has a maximum after the pentagram has unfolded.  The curve
  \(\gamma\) of degenerate stationary points is obtained by solving
  equation~\eqref{eq:elem_sing:deg_cond} in the form \(y = \gamma(x)\)
  around \((0, y^\ast)\) where \(y^\ast\) is the parallel component of
  \(m^\ast\).  Let us now compute the second derivative of the
  \(v\)-component of the curve:
\begin{equation}
  \begin{split}
    \dv[2]{v}{x}\bigg|_{x=0} &= \dv{x}\left( \pdv{v}{x} + \pdv{v}{y} \dot\gamma(x) \right)    \\
    &= \pdv[2]{v}{x}\bigg|_{x=0} + \pdv{v}{y}\bigg|_{x=0} \ddot\gamma(0)
  \end{split}
\end{equation}
The other mixed partial derivatives of \(v\) vanish since
\(\dot\gamma(0) = 0\) because of symmetry.
\end{proof}

Furthermore, we compute \(\ddot\gamma(0)\) via implicit
differentiation: Let us write \(f(x, y)\) for the left-hand side of
\eqref{eq:elem_sing:deg_cond} viewed as a function in the unit simplex
in \((x, y)\)-coordinates.  By implicit differentiation we then find:
\begin{equation}
  \dot\gamma(x) = -\pdv{f}{x} \bigg/ \pdv{f}{y}
\end{equation}
And therefore:
\begin{equation}
  \begin{split}
    \ddot\gamma(0) &= -\frac{\pdv[2]{f}{x}}{\pdv{f}{y}} +
    \frac{\pdv{f}{x} \pdv[2]{f}{x}{y}}{\qty(\pdv{f}{y})^2}
    = -\frac{\pdv[2]{f}{x}}{\pdv{f}{y}} - \dot\gamma(0)
    \frac{\pdv[2]{f}{x}{y}}{\pdv{f}{y}}
      \\
      &= -\pdv[2]{f}{x}\bigg/ \pdv{f}{y}.
  \end{split}
\end{equation}
Using the symbolic calculus tools (see
page~\pageref{source_code_info}) we can obtain an expression for
\eqref{eq:be:zero_problem}.

\begin{figure}
  \centering
  \includegraphics{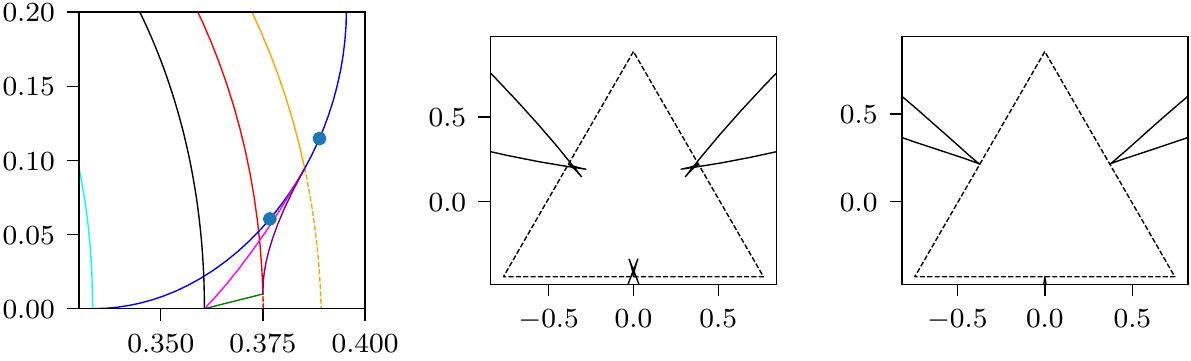}
  \caption{This figure shows the bifurcation set sliced at two points
    on the symmetric cusp exit line.  The left plot shows a slice
    before the butterfly exit point is passed (lower point in the
    phase diagram), the right plot shows a slice after the
    butterfly exit point (intersection point of yellow and blue line)
    on the symmetric cusp exit line.}
  \label{fig:butterfly_exit_sce}
\end{figure}

Using a similar approach it is possible to compute the line in the
dynamical phase diagram for which we find butterfly points no matter
where these points are with respect to the unit simplex.  The key idea
that the parallel component of the curve in \(\alpha\)-space has a
vanishing second derivative with respect to the curve parameter stays
the same.  But since we do not restrict the point in \(\alpha\)-space
to lie on the unit simplex we lose one equation and we end up with a
one-dimensional set of solutions.

\begin{proposition}
  For \(\beta\) in \((\beta_{\mathrm{BE}}, \frac{8}{3})\) the
  butterfly unfolding happens at the unique \emph{butterfly transition
    time}~\(t_{\mathrm{BU}}(\beta)\) which is obtained as follows:
  Define a function \(H\) via
  \begin{equation}
    \label{eq:bl:w}
    H(\beta, s) = H_1(\beta, s) + \sqrt{H_2(\beta, s)}
  \end{equation}
  where
  \begin{align*}
    H_1(\beta, s) &= \beta e^{2s} - se^{2s} + 4\beta e^s - 4se^s + \beta + 2s - 3e^{2s} - 3e^s\\
    H_2(\beta, s) &= {\left(\beta^{2} - 2 \, {\left(\beta - 3\right)} s + s^{2} - 6 \, \beta + 9\right)} e^{4 \, s}
    \\
                  &\quad + 2 \, {\left(4 \, \beta^{2} - {\left(8 \, \beta - 9\right)} s + 4 \, s^{2} - 9 \, \beta - 9\right)} e^{3 \, s}\\
                  &\quad + 3 \, {\left(6 \, \beta^{2} - 2 \, {\left(5 \, \beta - 6\right)} s + 4 \, s^{2} - 18 \, \beta + 3\right)} e^{2 \, s}
    \\
                  &\quad + 2 \, {\left(4 \, \beta^{2} + 2 \, {\left(2 \, \beta - 15\right)} s - 8 \, s^{2} - 15 \, \beta\right)} e^{s}
    \\
    &\quad + \beta^{2} + 4 \, \beta s + 4 \, s^{2}
  \end{align*}
  and a function
  \begin{equation}
    t(\beta, s) = \frac{1}{3} \log \frac{
      H(\beta, s) + 6e^s
    }{
      H(\beta, s) - 12e^s
    }.
  \end{equation}
  Then the butterfly transition time \(t_{\mathrm{BU}}(\beta)\) is
  given by
  \begin{equation}
    \label{eq:bu:t}
    t_{\mathrm{BU}}(\beta) = t(\beta, s_\ast(\beta)) =
    \frac{1}{3} \log \frac{
      H(\beta, s_\ast(\beta)) + 6e^{s_\ast(\beta)}
    }{
      H(\beta, s_\ast(\beta)) - 12e^{s_\ast(\beta)}
    }
  \end{equation}
  and \(s_\ast(\beta) < 0\) is the largest zero of
  \begin{equation}
    \label{eq:bl:zero_problem}
    s \mapsto \pdv[2]{v}{x}\Big(\varphi_{\beta}\big(0, \frac{s}{3}, 0\big), \beta, t(\beta, s)\Big) +
    \pdv{v}{y}\Big(\varphi_{\beta}\big(0, \frac{s}{3}, 0\big), \beta, t(\beta, s)\Big) \ddot\gamma_{\beta, t(\beta, s)}(0).
  \end{equation}
\end{proposition}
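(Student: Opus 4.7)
The plan is to adapt the argument of Proposition~\ref{prop:be} by dropping the constraint that the symmetric cusp lies on a simplex edge, so that the endconditioning $\alpha$ is allowed to range over all of the hyperplane~$H$. A symmetric degenerate stationary point $m$ with $m_2 = m_3$ (equivalently $x=0$, $y = s/3$) produces, under $\chi(\cdot,\beta,t)$, a symmetric cusp in $\alpha$-space; the butterfly unfolding occurs exactly when the parallel component $v$ of the corresponding branch of the bifurcation set has vanishing second derivative with respect to the parameter $x$ along the curve $\gamma_{\beta,t}$. This is the same geometric criterion that was used to define the BE line, cf.\ \eqref{eq:be:zero_problem}, so the implicit differentiation identity $\ddot v\big|_{x=0} = \pdv[2]{v}{x} + \pdv{v}{y}\,\ddot\gamma(0)$ and the formula for $\ddot\gamma(0)$ obtained from \eqref{eq:elem_sing:deg_cond} carry over verbatim.

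The first new step is to extract from the degeneracy condition the explicit relation $g_t = g_t(\beta,s)$ that underlies the formulas~\eqref{eq:bl:w} and $t(\beta,s)$. I would first observe that at a symmetric point $m_2 = m_3$, the image $\alpha = \chi(m,\beta,t)$ inherits the symmetry $\alpha_2 = \alpha_3$ from \eqref{eq:elem_sing:gamma_symmetry}, which makes the mixed partial $\pdv[2]{G}{x}{y}$ vanish via \eqref{eq:sce:mixed_partial}. As in the proof of Proposition~\ref{prop:sce}, the Hessian determinant then factors, and since $\pdv[2]{G}{y} > 0$ throughout the regime $\beta < \tfrac{8}{3}$ (again by \eqref{eq:sce:Gyy}), the degeneracy condition reduces to $\pdv[2]{G}{x} = 0$. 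Computing $\pdv[2]{G}{x}$ at a general symmetric point (with $m_1 = \tfrac{1}{3} + 2y/\beta$, $m_2 = m_3 = \tfrac{1}{3} - y/\beta$ rather than the special simplex-edge choice $\alpha_1 = 0$ used in the SCE proof) and substituting $\alpha = \chi(m,\beta,t)$ via \eqref{eq:gamma_inverse_diag}--\eqref{eq:gamma_inverse_offdiag} yields, after clearing denominators, a quadratic equation in $u = e^{g_t} + 1$ with coefficients polynomial in $e^s$ and $\beta$. The unique root satisfying $u > 2$ (so that $g_t > 0$) is, up to a factor $6e^s$, the function $H(\beta,s) = H_1(\beta,s) + \sqrt{H_2(\beta,s)}$, and inverting $g_t = \log\!\tfrac{1+2e^{-3t}}{1-e^{-3t}}$ produces the stated closed form $t(\beta,s) = \tfrac{1}{3}\log\!\tfrac{H + 6e^s}{H - 12 e^s}$.

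Having parametrized the symmetric branch of the bifurcation set by $s$ via $(\beta, t(\beta,s))$, the butterfly unfolding condition $\ddot v|_{x=0} = 0$ becomes a single equation in $(\beta,s)$ — namely \eqref{eq:bl:zero_problem} — after substituting $t = t(\beta,s)$ into the expression $\pdv[2]{v}{x} + \pdv{v}{y}\,\ddot\gamma(0)$ derived exactly as in the proof of Proposition~\ref{prop:be}. For fixed $\beta \in (\beta_{\mathrm{BE}}, \tfrac{8}{3})$, the largest root $s_\ast(\beta) < 0$ corresponds to the first butterfly exit encountered as time increases (equivalently, as $g_t$ decreases from the value at which the symmetric cusp just entered the simplex), and $t_{\mathrm{BU}}(\beta) = t(\beta, s_\ast(\beta))$ follows. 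Uniqueness of the butterfly transition time for $\beta$ in the stated interval is then the statement that the map $s \mapsto t(\beta,s)$ is monotone across this root, which can be read off from the sign of the plotted function in Figure~\ref{fig:be:zero_problem_plot} and its analog here.

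The main obstacle is purely algebraic: identifying the expressions $H_1$, $H_2$ in closed form from the quadratic in $u$, and verifying \eqref{eq:bl:zero_problem} in terms of elementary functions, requires extensive manipulation that is cleanest to carry out with computer algebra, as the authors indicate. Subtleties to check by hand are the correct choice of branch in the quadratic (so that $H - 12 e^s > 0$ and $g_t > 0$ on the relevant segment), and that in the regime $\beta \in (\beta_{\mathrm{BE}}, \tfrac{8}{3})$ the zero $s_\ast(\beta)$ is indeed the largest negative root and corresponds to the first exit through the symmetric cusp branch rather than through the competing asymmetric branches.
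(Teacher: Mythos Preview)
Your proposal is correct and follows essentially the same approach as the paper: reduce the degeneracy condition at a symmetric point to $\partial^2_x G = 0$, solve this quadratic for $w = e^{g_t}+1$ (indeed $w = H/(6e^s)$, as you note), and then impose the butterfly criterion $\ddot v|_{x=0} = 0$ exactly as in Proposition~\ref{prop:be} to obtain the one-variable zero problem \eqref{eq:bl:zero_problem}. You in fact supply more justification than the paper's brief proof---the factorization of the Hessian determinant, the positivity of $\partial^2_y G$ for $\beta < 8/3$, and the branch/uniqueness caveats---none of which the paper spells out.
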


\begin{proof}
  Using the same reasoning as in the proof of
  Proposition~\ref{prop:be}, we find that the point \(m\) maps under
  \(\chi(\cdot, \beta, t)\) to a point \(\alpha\) that is about to
  unfold into a pentagram if
  \begin{equation}
    \label{eq:bl:minmax}
    \dv[2]{}{x}\Bigg|_{x=0} v(\varphi_{\beta}^{-1}(x, \gamma_{\beta, t}(x), 0), \beta, t) = 0
  \end{equation}
  where \(\gamma_{\beta, t}\) is obtained by solving the degeneracy
  condition~\eqref{eq:elem_sing:deg_cond} in the form
  \(y = \gamma_{\beta, t}(x)\) in a neighbourhood of the point \(m\).
  This equation is now dependent on \(m, \beta\) and \(t\), that is,
  we have one equation and three variables (\(m\) is one-dimensional
  because \(m_2 = m_3\)).  Additionally, since we know that the
  direction of degeneracy is the \(x\)-direction, we have the equation
  \begin{equation}
    \pdv[2]{G_{\alpha, \beta, t}}{x}\Bigg|_{x=0} = 0.
  \end{equation}
  This equation can be solved for \(w = e^{g_t} + 1\) which yields
  \eqref{eq:bl:w}.  Plugging this into \eqref{eq:bl:minmax}, we are
  left to find the zeros of \eqref{eq:bl:zero_problem} for some fixed
  \(\beta\) in the interval~\((\beta_{\mathrm{BE}}, \frac{8}{3})\).
\end{proof}

\subsection{Reentry into Gibbs: the asymmetric cusp exit (ACE) line}
\label{sec:ace}

\begin{figure}
  \centering
  \includegraphics{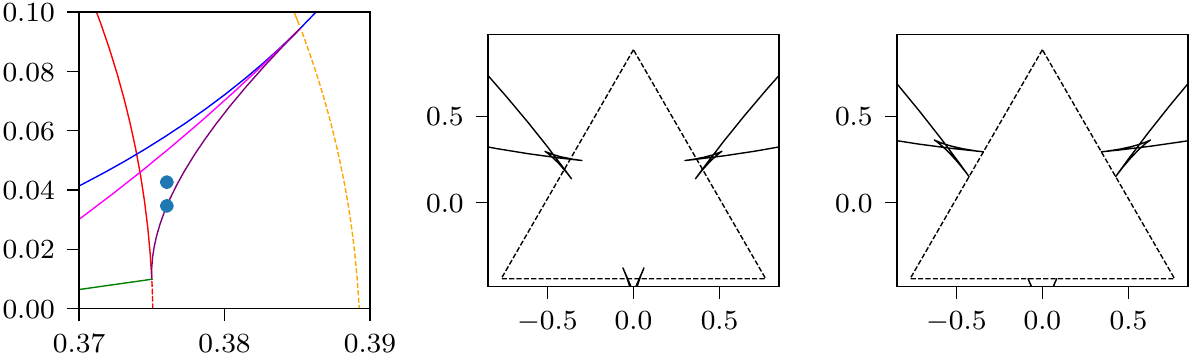}
  \caption{ This figure shows two bifurcation set slices that
    illustrate the exit of the asymmetric cusp points.  The central
    plot shows the bifurcation set slice for a time at which the exit
    has not yet happened (upper point in the phase diagram).  The
    rightmost plot shows the bifurcation set slice exactly on the
    purple line ACE, that is, when the exit is just happening.}
  \label{fig:ace:bifu_slices}
\end{figure}

In the \(\beta\)-regime
\((\beta_{\mathrm{NG}}, \beta_{\mathrm{BE}})\), three pentagrams
unfold inside of the simplex at an intermediate time and leave the
simplex as \(t\) increases further.  Since we are interested in
phase-coexistence of the first layer model \(\bar\mu_n\)
(Lemma~\ref{lem:cond_first_layer}) and the phase-coexistence lines of
the pentagram end in the asymmetric cusp points of the pentagrams, we
must compute the exit time \(t_{\mathrm{G}}(\beta)\) of these points
for \(\beta\) in the above regime.  Like in the previous subsection,
this is done using a combination of symbolic and numerical computation
(see page~\pageref{source_code_info}).  First, let us state the
problem that we need to solve.

\begin{proposition}
  Fix a positive \(\beta\) and positive \(t\) and consider the set of
  solutions \(m\) to the degeneracy
  condition~\eqref{eq:elem_sing:deg_cond} with
  \(\alpha = \chi(m, \beta, t)\).

  \begin{enumerate}
  \item There is exactly one branch of solution with \(m_2 = m_3\) and
    it is given by the graph of a map
    \(x \mapsto y = \gamma_{\beta, t}(x)\).
  \item Furthermore, define the map \((x, y) \mapsto v(x, y)\) via
  \begin{equation}
    v(x, y) = (\varphi_{\beta})_1 \circ \chi( \varphi_{\beta}^{-1}(x, y, 0), \beta, t).
  \end{equation}
  Then the asymmetric cusps of the pentagrams are on the simplex edges
  if and only if
  \begin{align}
    \label{eq:asym_cusps:simpl_egde}
    v\qty(x, \gamma_{\beta, t}(x)) &= -\frac{1}{6}\beta\\
    \label{eq:asym_cusps:max}
    \pdv{v}{x} \qty(\varphi_{\beta}(x, \gamma_{\beta, t}(x), 0))
    + \pdv{v}{y} \qty(\varphi_{\beta}(x, \gamma_{\beta, t}(x), 0)) \dot\gamma_{\beta, t}(x) &= 0.
  \end{align}
\end{enumerate}
\end{proposition}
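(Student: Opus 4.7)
The plan hinges on the $\mathbb{Z}_2$-symmetry swapping indices $2$ and $3$---which in the $(x,y,z)$-chart of \eqref{eq:coordinates} is the reflection $x\mapsto-x$---together with the implicit function theorem applied to the degeneracy condition \eqref{eq:elem_sing:deg_cond}.

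For part~(a), I would restrict \eqref{eq:elem_sing:deg_cond} to $m\in H$, so $z=0$ and the condition reduces to a scalar equation $f_{\beta,t}(x,y)=0$. The symmetry relation $\rho^{-1}\Gamma(\beta m,t)\rho=\Gamma(\rho\beta m,t)$ of Proposition~\ref{prop:sing_summary}(a), together with the analogous transformation property of the Hessian of $G_{\alpha,\beta,t}$, forces $f_{\beta,t}(-x,y)=f_{\beta,t}(x,y)$. Hence the zero set is invariant under $x\mapsto-x$, and the symmetric branch is the component crossing the axis $\{x=0\}$. On this axis, $y\mapsto f_{\beta,t}(0,y)$ is an explicit one-variable expression in the exponential sums arising from $\Gamma$, whose relevant root $y_0$ corresponds to the symmetric degenerate stationary point. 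A direct check shows $\partial_y f_{\beta,t}(0,y_0)\neq 0$ outside the higher-corank exceptional loci, so the implicit function theorem yields $y=\gamma_{\beta,t}(x)$ locally, and evenness of $\gamma_{\beta,t}$ in $x$ is enforced by the reflection symmetry. Uniqueness of the branch follows from the fact that, in the parameter regime under consideration, the reduction $f_{\beta,t}(0,\cdot)=0$ admits only the one simple root responsible for the pentagram singularity.

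For part~(b), I would parametrize the image $\alpha$-bifurcation curve as $x\mapsto(v(x,\gamma_{\beta,t}(x)),w(x,\gamma_{\beta,t}(x)))$, where $w$ denotes the companion $(\varphi_\beta)_2$-coordinate of $\chi$. By the singularity-theoretic framework already used in Propositions~\ref{prop:sce} and~\ref{prop:be}, the cusps of this parametrized curve are exactly the points where the tangent vanishes and the normal form is of type $A_2$; the reflection symmetry forces one symmetric cusp at $x=0$ (treated in Proposition~\ref{prop:sce}) and two asymmetric cusps at conjugate values $\pm x^\ast\neq 0$. Condition \eqref{eq:asym_cusps:simpl_egde} is then the statement that the $\chi$-image of the asymmetric cusp lies on the targeted simplex edge: substituting the edge parametrization into the definition of $v$ via \eqref{eq:coordinates} yields exactly the value $-\beta/6$. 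Condition \eqref{eq:asym_cusps:max} is the chain-rule expansion of $\tfrac{d}{dx}v(x,\gamma_{\beta,t}(x))=0$, i.e.\ the vanishing of the first component of the tangent to the $\alpha$-curve; the simultaneous vanishing of the companion derivative---required for a genuine cusp of the image curve---is automatic, since on the bifurcation set the differential $d\chi$ drops rank and its kernel is aligned with the tangent of $\gamma_{\beta,t}$ (this is essentially the content of \eqref{eq:elem_sing:deg_cond}, cf.\ the cusp characterization in the proof of Proposition~\ref{prop:sce}).

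The main technical hurdle is verifying rigorously that the pair \eqref{eq:asym_cusps:simpl_egde}--\eqref{eq:asym_cusps:max} actually isolates a genuine $A_2$-cusp on the edge, rather than a spurious point where $v$ merely has a horizontal tangent along $\gamma_{\beta,t}$. Establishing this requires control of the second- and third-order jets of the parametrization at the candidate point; the calculations are routine in principle but algebraically cumbersome, and are most efficiently discharged by the symbolic-computation tools referenced elsewhere in the paper. The remaining analytic content---implicit differentiation to extract $\dot\gamma_{\beta,t}$ and the closed-form expressions for $\partial_x v$ and $\partial_y v$ in terms of $\Gamma(\beta m,t)$---follows directly from Proposition~\ref{prop:sing_summary}.
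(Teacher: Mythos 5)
Your route for part (b) arrives at the right pair of equations, but the step that justifies why the single scalar condition \eqref{eq:asym_cusps:max} suffices is wrong as stated. You claim the vanishing of the companion component of the tangent to the image curve $x\mapsto\chi(\varphi_\beta^{-1}(x,\gamma_{\beta,t}(x),0),\beta,t)$ is ``automatic, since on the bifurcation set the differential $d\chi$ drops rank and its kernel is aligned with the tangent of $\gamma_{\beta,t}$.'' The degeneracy condition \eqref{eq:elem_sing:deg_cond} only says that $G''$ (equivalently $d\chi$) has a nontrivial kernel along the fold branch; the alignment of that kernel with the tangent of the branch is precisely the defining property of a cusp and holds only at isolated points, not along the whole branch --- if it held everywhere, the image of the branch under $\chi$ would collapse to a point. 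So your argument assumes the cusp condition in order to derive it. The paper argues differently: it identifies the asymmetric cusps geometrically as the local maxima of the parallel coordinate $v$ along the image of the fold branch (this is what the pentagram shape in Figure~\ref{fig:ace:bifu_slices} exhibits), so that \eqref{eq:asym_cusps:max} is just the first-order condition $\frac{\mathrm{d}}{\mathrm{d}x}\,v(x,\gamma_{\beta,t}(x))=0$ for those maxima, and \eqref{eq:asym_cusps:simpl_egde} places them on the edge, where the parallel coordinate equals $-\beta/6$ by \eqref{eq:coordinates}. If you prefer to characterize the cusps by the vanishing of the full tangent vector, you must verify the second component's vanishing separately (or rule out spurious critical points of $v$ along the branch some other way); it does not come for free.

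On part (a), your symmetry-plus-implicit-function-theorem sketch is sensible and in fact goes beyond the paper, whose proof addresses only part (b); but note that the actual content of (a) --- that exactly one branch of the degeneracy locus meets the axis $m_2=m_3$ and that it is globally a graph over $x$ --- is exactly what you defer to ``the parameter regime under consideration,'' so nothing is established there either. Your reading of \eqref{eq:asym_cusps:simpl_egde} as the edge condition and of \eqref{eq:asym_cusps:max} as the chain-rule derivative along $\gamma_{\beta,t}$ agrees with the paper.
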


\begin{proof}
  The location of the asymmetric cusps of the pentagrams on the curve
  \(x \mapsto \chi(\varphi^{-1}_\beta(x, \gamma_{\beta, t}, 0), \beta,
  t)\) are given by the local maxima of the parallel component
  \(v(x)\) as a function of the curve parameter \(x\) (see
  Figure~\ref{fig:ace:bifu_slices}).  This yields
  \eqref{eq:asym_cusps:max}.
  Equation~\eqref{eq:asym_cusps:simpl_egde} comes from the constraint
  that the cusp point lies on the simplex edge because for points on
  the edge the parallel component equals \(-\frac{1}{6}\beta\) in the
  chart~\eqref{eq:coordinates}.
\end{proof}

Now, similarly to the case for the butterfly line, the computation of
\(\dot\gamma_{\beta, t}(x)\) by hand is impractical.  Therefore we
compute the expression symbolically with the help of the computer.
This allows us to numerically determine the course of the line in the
dynamical phase diagram.  Now, because it is impossible to solve the
degeneracy equation~\eqref{eq:elem_sing:deg_cond} in the form
\(y = \gamma_{\beta, t}(x)\) explicitly, we proceed as follows.  Note
that it is possible to solve \eqref{eq:asym_cusps:simpl_egde} for
\(\beta\) and plug it into equation~\eqref{eq:asym_cusps:max}.  We
then fix some value of \(g_t\), and numerically solve the system
consisting of the degeneracy condition~\eqref{eq:elem_sing:deg_cond},
where \(\beta\) is substituted from \eqref{eq:asym_cusps:simpl_egde},
and equation~\eqref{eq:asym_cusps:max}, where \(\gamma_{\beta, t}\) is
substituted by \(y\) and
\begin{equation}
  \dot\gamma_{\beta, t}(x) = -\pdv{f}{x} \bigg/ \pdv{f}{y}
\end{equation}
where \(f\) denotes the left-hand side of
\eqref{eq:elem_sing:deg_cond} considered as a function of \((x, y)\).
This yields two equations in the two variables \(x\) and \(y\).

\subsection{The triple point exit (TPE) line}
\label{sec:tpe}

To each of the three pentagrams there belongs a special point, the
\emph{triple point} \parencite[see][Sects.~3.2]{KuMe20}.  This
point is characterized by the coexistence of three global minima, that
is, the functional values of all the three minimizers are equal.
First, we discuss the existence of these points and then we determine
for each fixed positive \(\beta\) the exit time
\(t_\mathrm{triple}(\beta)\).  This is the last time for which there
are bad empirical measures with partial symmetry that lie inside the
unit simplex.

\begin{proposition}
  For each pair \((\beta, t)\) in
  \begin{equation}
    \{ (\beta, t) \:|\: \beta_{\mathrm{BE}} < \beta < 4\log 2, t > t_{\mathrm{BU}}(\beta) \}
  \end{equation}
  there exists exactly one \(\alpha\) in the hyperplane \(H\) with
  \(\alpha_1 \le \alpha_2 \le \alpha_3\) such that
  \(G_{\alpha, \beta, t}\) has precisely three global minimizers.
\end{proposition}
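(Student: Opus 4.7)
The plan is to exploit the $S_3$-invariance of $G_{\alpha,\beta,t}$ to reduce the search for triple points to the one-dimensional reflection axis $\alpha_2=\alpha_3$, and then to prove existence and uniqueness by an intermediate-value / monotonicity argument for the energy difference between the symmetric local minimum and the asymmetric pair. When $\alpha_2 = \alpha_3$, Proposition~\ref{prop:sing_summary}(a) forces $G_{\alpha,\beta,t}\circ\tau = G_{\alpha,\beta,t}$ for the transposition $\tau$ of indices $2$ and $3$, so critical points come in $\tau$-orbits of size one ($m_2=m_3$) or two. Past the butterfly unfolding $t > t_{\mathrm{BU}}(\beta)$, the analysis of the butterfly normal form carried out for the static model in \cite{KuMe20} produces, throughout an open axis segment $\alpha_1\in (\alpha_{\min},\alpha_{\max})$, a unique symmetric local minimum $m_{\mathrm{sym}}(\alpha)$ and a unique asymmetric $\tau$-pair $\{m_{\mathrm{asym}}(\alpha),\tau m_{\mathrm{asym}}(\alpha)\}$, both smooth in $\alpha$ by the implicit function theorem applied to $\nabla G_{\alpha,\beta,t}=0$. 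Consequently the existence of three global minima on this axis reduces to the single scalar equation
\begin{equation}
  \Phi(\alpha) := G_{\alpha,\beta,t}(m_{\mathrm{sym}}(\alpha)) - G_{\alpha,\beta,t}(m_{\mathrm{asym}}(\alpha)) = 0.
\end{equation}

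At the endpoints $\alpha_{\min},\alpha_{\max}$ the asymmetric branch is born from (respectively annihilated into) the symmetric branch through a symmetric cusp of the pentagram, so the butterfly normal form fixes definite opposite signs of $\Phi$ near the two endpoints: the asymmetric pair appears first as a higher-energy critical configuration and eventually drops below the symmetric branch, with orientation determined by the requirement $\alpha_1 \le \alpha_2 = \alpha_3$. Existence of a zero of $\Phi$ then follows from the intermediate value theorem. For uniqueness I would apply the envelope theorem at each local minimum to obtain
\begin{equation}
  \dv{\Phi}{\alpha_1} = \sum_{b=1}^{3} \pdv{\alpha_b}{\alpha_1} \log \frac{\sum_{a} e^{\beta(m_{\mathrm{asym}})_a + g_t 1_{a=b}}}{\sum_{a} e^{\beta(m_{\mathrm{sym}})_a + g_t 1_{a=b}}},
\end{equation}
where $\partial\alpha_2/\partial\alpha_1 = \partial\alpha_3/\partial\alpha_1 = -\tfrac{1}{2}$ along the axis. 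Rewriting this as an $\alpha$-weighted log-ratio of the row distributions of $\Gamma(\beta m,t)$ from Proposition~\ref{prop:sing_summary}(b) should yield a fixed sign whenever $m_{\mathrm{sym}}\neq m_{\mathrm{asym}}$, which holds throughout the interior of the interval, producing strict monotonicity and hence a unique zero.

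Finally, triple points in the chamber $\{\alpha_1 \le \alpha_2 \le \alpha_3\}$ lying off the axis $\alpha_2 = \alpha_3$ are excluded as follows: the two other $S_3$-images of this axis leave the chamber, so only one pentagram contributes triple points there, and the universality hypothesis conjugates $G_{\alpha,\beta,t}$ into the static butterfly potential $f_{\beta_{\mathrm{st}},\alpha_{\mathrm{st}}}$ of \cite{KuMe20}, whose classical Maxwell stratification places the triple stratum exactly on the pentagram's reflection axis. The main obstacle I foresee is the uniqueness step: since $G_{\alpha,\beta,t}$ is not convex in the regime considered and $m_{\mathrm{sym}}, m_{\mathrm{asym}}$ are only implicitly defined, the sign of $\dv{\Phi}{\alpha_1}$ is not globally manifest from the envelope-theorem formula; making this rigorous will most likely require either the symbolic-computation pipeline already used elsewhere in the paper or a direct comparison with the cusp-transverse normal form of the butterfly, where the monotonicity becomes transparent.
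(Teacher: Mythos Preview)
Your approach matches the paper's: reduce to the axis $\alpha_2=\alpha_3$ by symmetry, track the energy difference between the symmetric local minimum and the asymmetric pair as a function of the axis parameter, and differentiate this difference via the envelope theorem (the paper obtains the same log-ratio expression you write, using that both $m(v)$ and $m'(v)$ are stationary points). The paper's own argument is in fact only a sketch that stops immediately after displaying $g'(v)$ without establishing its sign or drawing the IVT/uniqueness conclusion, so the obstacle you flag---making the monotonicity of $\Phi$ rigorous---is precisely the step the paper leaves implicit as well.
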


\begin{proof}
  By symmetry, the triple point \(\alpha\) has the partial symmetry
  \(\alpha_2 = \alpha_3\).  Therefore consider the curve
  \(v \mapsto \alpha(v) = \varphi_{\beta}^{-1}(0, v, 0)\) which
  crosses the \(\alpha\)-region for which the potential
  \(G_{\alpha, \beta, t}\) has three minimizers two of which lie
  inside the same fundamental cell \(m_1 \le m_2 \le m_3\).  There is
  always such a region because the pentagrams have already unfolded
  (\(t > t_{\mathrm{but}}\)).  This gives rise to the two maps
  \(v \mapsto m(v)\) and \(v \mapsto m'(v)\) which map \(v\) to one of
  the two minimizers \(m(v)\) or \(m'(v)\) inside this cell. Assume
  that \(\varphi_\beta(m(v)) = (x(v), y(v), 0)\) and
  \(\varphi_{\beta}(m'(v)) = (0, y'(v), 0)\) with \(y'(v) > y(v)\) and
  \(x(v) > 0\).  Now, we can define the difference
  \begin{equation}
    g(v) := G_{\alpha(v), \beta, t}(m(v)) - G_{\alpha(v), \beta, t}(m'(v))
  \end{equation}
  for all \(v\) such that \(\alpha(v)\) lies in the former regime.  Therefore
  \begin{equation}
    \begin{split}
      g'(v) &= \pdv{G_{\alpha(v), \beta, t}}{v} - \pdv{G_{\alpha(v),
          \beta, t}}{v}
      \\
      &= \log \frac{(e^{g_t+2x} + e^{3y+x} + 1)(e^{g_t + 3y'} +
        2)^2(e^{g_t} + e^{2x} + e^{3y + x})}{(e^{g_t+x+3y} + e^{2x} +
        1)^2(e^{g_t} + e^{3y'} + 1)^2}
    \end{split}
  \end{equation}
  since \(m(v)\) and \(m'(v)\) are stationary points.
\end{proof}

Since the pentagrams in the bifurcation slices leave the simplex
(observation window), it is necessary for a discussion of the bad
empirical measures that we find the time when the triple points leave
the unit simplex.  The problem that we have to solve is stated in the
following proposition.

\begin{proposition}
  Fix any positive \(\beta\) in the interval
  \((\beta_{\mathrm{BE}}, 4\log 2)\) and let \(\alpha\) be the
  midpoint of the edge of the simplex with \(\alpha_2 = \alpha_3\).
  First, define the function
  \begin{equation}
    \label{eq:tpe:time_fct}
    t(\beta, y) = \frac{1}{3}
    \log(\frac{
      2(\beta - 3y)e^{3y} + \beta + 6y
    }{
      2((\beta - 3y)e^{3y} - \beta - 6y
    })
  \end{equation}
  The exit time~\(t_{\mathrm{TPE}}(\beta)\) is then given by
  \(t_{\mathrm{TPE}}(\beta) = t(\beta, y'(\beta))\) where
  \(\varphi_{\beta}(0, y'(\beta))\) and
  \(\varphi_{\beta}(x(\beta), y(\beta))\) lie in the fundamental cell
  \(m_1 \le m_2 \le m_3\) and the triple
  \((y'(\beta), x(\beta), y(\beta))\) is a solution to the following
  system of equations.
  \begin{align}
    \label{eq:tpe:1}
    G_{\alpha, \beta, t(\beta, y')} \circ \varphi_{\beta}(0, y', 0) &= G_{\alpha, \beta, t(\beta, y')} \circ \varphi_{\beta}(x, y, 0)
    \\
    (\varphi_{\beta})_1 \circ \chi((\varphi_{\beta})^{-1}(0, y', 0), \beta, t(\beta, y')) &= (\varphi_{\beta})_1 \circ \chi((\varphi_{\beta})^{-1}(x, y, 0), \beta, t(\beta, y'))
    \\
    \label{eq:tpe:3}
    (\varphi_{\beta})_2 \circ \chi((\varphi_{\beta})^{-1}(0, y', 0), \beta, t(\beta, y')) &= (\varphi_{\beta})_2 \circ \chi((\varphi_{\beta})^{-1}(x, y, 0), \beta, t(\beta, y'))
  \end{align}
\end{proposition}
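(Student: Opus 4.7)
The plan is to translate the geometric characterization of the triple point exit, namely the moment at which the triple point $\alpha$ of the previous proposition reaches $\partial\Delta^2$, into the system (\ref{eq:tpe:1}--\ref{eq:tpe:3}) together with the time formula (\ref{eq:tpe:time_fct}). By the partial symmetry $\alpha_2 = \alpha_3$ inherited from the previous proposition and the reflection symmetry swapping coordinates $2$ and $3$, the exit location on $\partial\Delta^2$ must be the midpoint of the edge $\alpha_1 = 0$, i.e. $\alpha = (0, \tfrac{1}{2}, \tfrac{1}{2})$. At this $\alpha$ the three global minimizers of $G_{\alpha,\beta,t}$ decompose accordingly: one symmetric minimizer with $m_2 = m_3$ (equivalently $x = 0$ in the chart) and a pair of asymmetric ones related by reflection $x \mapsto -x$. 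I would parametrize them as $\varphi_{\beta}^{-1}(0, y', 0)$ and $\varphi_{\beta}^{-1}(\pm x, y, 0)$.

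First I would derive (\ref{eq:tpe:time_fct}). Under $\chi(\cdot,\beta,t)$, the symmetric stationary point automatically maps to an $\alpha$ with $\alpha_2 = \alpha_3$ by the equivariance (\ref{eq:elem_sing:gamma_symmetry}), so the requirement that its image additionally has $\alpha_1 = 0$ amounts to a \emph{single} scalar equation, namely that the $y$-component of $\chi(\varphi_{\beta}^{-1}(0, y', 0),\beta,t)$ equals $-\beta/6$. Using the explicit formulas for $\Gamma^{-1}$ from Proposition~\ref{prop:sing_summary}, this equation reduces to the same mean-field equation on the edge that was used to derive $t_{\mathrm{NG}}$, cf.\ (\ref{eq:t_NG:w}); solving for $g_t$ as a function of $\beta$ and $y'$ and then inverting (\ref{eq:def:g_t}) yields (\ref{eq:tpe:time_fct}).

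Next I would derive (\ref{eq:tpe:1}--\ref{eq:tpe:3}). Since by Proposition~\ref{prop:sing_summary} the catastrophe manifold is the graph of $\chi$, the condition that both the symmetric and the asymmetric critical point are stationary points of $G_{\alpha,\beta,t}$ for the \emph{same} $\alpha$ is equivalent to
\begin{equation}
  \chi(\varphi_{\beta}^{-1}(x, y, 0),\beta,t) \;=\; \chi(\varphi_{\beta}^{-1}(0, y', 0),\beta,t).
\end{equation}
Separating this vector equation in the hyperplane $H$ into its $x$- and $y$-components under $\varphi_{\beta}$ gives exactly the middle equation of the system together with (\ref{eq:tpe:3}), the $z$-component being automatic because both preimages already lie in $H$. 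Finally, the triple-point condition that the three minimizers share a common $G$-value becomes, after using the reflection symmetry to identify the two asymmetric minimizers, precisely the single scalar equation (\ref{eq:tpe:1}). Combined with $t = t(\beta, y')$ supplied by (\ref{eq:tpe:time_fct}), this yields three equations in the three unknowns $(y', x, y)$, and $t_{\mathrm{TPE}}(\beta) := t(\beta, y'(\beta))$ follows.

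The main obstacle I expect is not the symbolic derivation, which is essentially formal once the symmetry bookkeeping is in place, but rather the verification that this algebraic characterization really captures the exit event. One needs transversality: that along the time axis at fixed $\beta \in (\beta_{\mathrm{BE}}, 4\log 2)$ the triple point $\alpha(\beta,t)$ crosses $\partial\Delta^2$ with a nonzero normal velocity at $t = t_{\mathrm{TPE}}(\beta)$, and one needs to rule out that the solution of the system coincides with a nearby codimension-one singularity (asymmetric cusp exit or beak-to-beak). Both issues are ultimately checked numerically using the symbolic pipeline referenced near (\ref{eq:tpe:time_fct}), because $\gamma_{\beta,t}$ cannot be inverted in closed form; the qualitative picture in Figure~\ref{fig:bad_measures} provides the guiding geometric intuition.
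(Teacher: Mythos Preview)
Your proposal is correct and follows essentially the same route as the paper: reduce by the $\alpha_2=\alpha_3$ symmetry to one symmetric minimizer $(0,y',0)$ and one asymmetric minimizer $(x,y,0)$ in the fundamental cell, eliminate $t$ using stationarity of the symmetric point at $\alpha=(0,\tfrac12,\tfrac12)$ (your $\chi$-formulation is equivalent to the paper's $\partial_y G=0$), and then impose equal depth together with $\chi$-agreement of the two critical points. Your extra paragraph on transversality and exclusion of nearby singularities goes beyond what the paper actually verifies; the paper, like you, defers those global checks to the symbolic/numerical pipeline.
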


Note that the expressions of the
equations~(\ref{eq:tpe:1}\,--\,\ref{eq:tpe:3}) are computed
symbolically by the computer (see page~\pageref{source_code_info} for
more information).  They are not displayed here because of their
length.  Figure~\ref{fig:tpe:contours} shows a contour plot of the HS
transform \(G_{\alpha, \beta, t}\) with
\(\alpha = (0, \frac{1}{2}, \frac{1}{2})\) and \((\beta, t)\) on the
line TPE.

\begin{figure}
  \centering
  \includegraphics{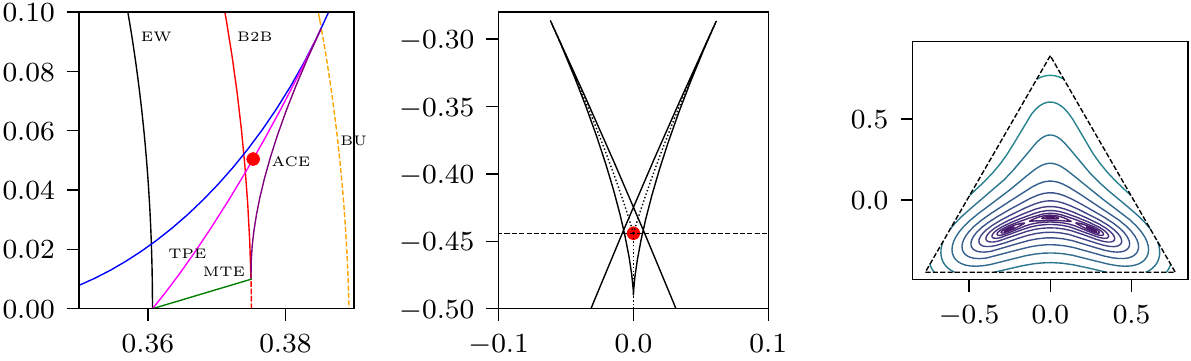}
  \caption{The four-dimensional parameter \((\alpha, \beta, t)\) is
    represented by the two red dots in the two plots on the left.  The
    first of these plots displays a region of the dynamical phase
    diagram and the second plot the respective bifurcation set slice
    clipped to a rectangle near the lower simplex edge which is
    represented by the dashed horizontal line.  The rightmost plot
    shows contour lines of the potential \(G_{\alpha, \beta, t}\) for
    the respective parameter.  As expected for a triple point, the
    contour lines show three equally deep minimizers of the
    potential.}
\label{fig:tpe:contours}
\end{figure}

\begin{proof}
  The system of equations mainly comes from two ingredients: equal
  depth of two minimizers and same end-conditioning \(\alpha\) for
  these two minimizers.  The triple point is characterized by a
  coexistence of three global minimizers and since a triple point
  \(\alpha\) must fulfill the symmetry relation
  \(\alpha_2 = \alpha_3\), we find that it is sufficient to compare
  the two minimizers in the fundamental cell \(m_1 \le m_2 \le m_3\).
  Because \(\alpha_2 = \alpha_3\), we always have one symmetric
  stationary point so that the two minimizers have the coordinates
  \((0, y', 0)\) and \((x, y, 0)\).  Since we now that either
  minimizer is a stationary point, we can use the vanishing of the
  first partial derivative of \(G_{\alpha, \beta, t}\) with respect to
  the \(y\)-coordinate to eliminate the time variable \(t\) from the
  equations.  This yields the function in
  equation~\eqref{eq:tpe:time_fct}.  Using this function we can
  eliminate the variable \(t\) from the equal depth condition and the
  other two equations that require that the minimizers belong to the
  same end-conditioning \(\alpha\).
\end{proof}

\subsection{The beak-to-beak (B2B) line}
\label{sec:b2b_line}

\begin{figure}
  \includegraphics{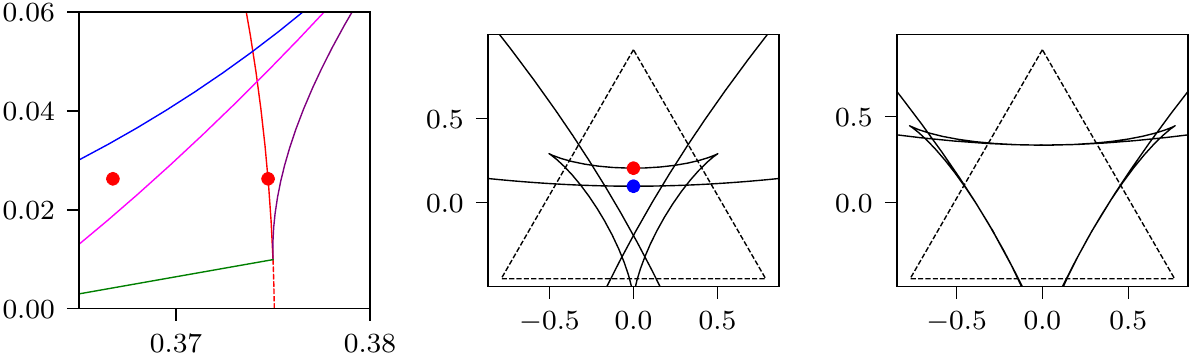}
  \caption{The beak-to-beak mechanism is characterized by the merging
    of two horns of two different pentagrams.  This merging joins two
    connected components of the complement of the bifurcation set
    slice when crossing the red line from right to left.  As can be
    seen in the two rightmost plots, this merging happens on the axis
    of symmetry.  The red dots in the dynamical phase diagram on the
    left mark the time\,--\,temperature pairs that correspond to the
    bifurcation set slices from left to right.  The dots in the
    central plot correspond to the points of the same color in
    Figure~\ref{fig:b2b:alpha1_vs_y}.}
  \label{fig:b2b:slices}
\end{figure}

The beak-to-beak point in the static model is characterized as a cusp
point that lies in a segment from the center of the simplex to one
vertex, that is, for example it has \(y > 0\).  The following
proposition describes the line of beak-to-beak points and a parametric
representation in terms of roots of a cubic polynomial.  Note that,
despite the fact that the line continues to exist for \(\beta > 3\),
the structural behavior of the bifurcation set around the beak-to-beak
point might change in the regime \(\beta > 3\).

\begin{proposition}
  Fix any positive \(\beta\) and \(t\), let \(m\) be a point in \(H\)
  with coordinates \((0, y, 0)\).

  \begin{enumerate}
  \item The point
    \(\alpha = \chi(m, \beta, t)\) is a beak-to-beak point if and only if
    \begin{gather}
      \label{eq:b2b:eq1}
      -(\beta + 6y - 2)(e^{g_t} + 1)e^{-3y} - (\beta - 3y - 1)e^{g_t + 3y} + e^{g_t}(e^{g_t} + 1) = 0
      \\
      \label{eq:b2b:eq2}
      (\beta + 6y - 4)(e^{g_t} + 1)e^{-3y} - (\beta - 3y - 2)e^{g_t +
        3y} = 0
    \end{gather}
  \item The solutions to this system can be parametrized in terms of
    \(s = 3y\) in the form
    \begin{align}
      \label{eq:b2b:beta}
      \beta &= \frac{
        2(s-2)w_\ast(s) + (s+2)(w_\ast(s)-1)e^{2s}
      }{
        (w_\ast(s)-1)e^{2s} - w_\ast(s)
      }
      \\
      g_t &= \log(w_\ast(s) - 1)
    \end{align}
    where \(s > s_\ast \approx 0.66656\) and \(w_\ast(s)\) is the
    unique root in the interval \((2, \infty)\) of the cubic
    polynomial
    \begin{equation}
      \label{eq:b2b:cubic}
      \begin{split}
        (e^{3s} - e^s) \, w^3 - (6s e^{2s} + e^{4s} + 2e^{3s} -
        3e^{2s} - e^s - 2) \, w^2 +
        \\
        (6se^{2s} + 2e^{4s}+3e^{3s} - 3e^{2s} -2e^s) \, w - e^{4s} - 2e^{3s}.
      \end{split}
    \end{equation}
    The positive real number \(s_\ast\) is the unique root in
    \((0, \infty)\) of the function
    \begin{equation}
      \label{eq:b2b:domain_def}
      s \mapsto -12se^{2s} - e^{4s} + 4e^{3s} + 6e^{2s} - 8e^s + 8.
    \end{equation}
  \item The beak-to-beak point enters the simplex for
    \(s = 2/3 > s_\ast\) at which \(\beta = \frac{8}{3}\) and
    \(g_t \approx 0.026481\).
\end{enumerate}

\end{proposition}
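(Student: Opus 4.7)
The plan is to mirror the proof of Proposition~\ref{prop:sce} for the symmetric cusp line, adapted to the beak-to-beak geometry. For item~(a), I fix $m = (0, y, 0)$ in $(x, y, z)$-coordinates and set $\alpha = \chi(m, \beta, t)$, so that $m$ is automatically a stationary point of $G_{\alpha, \beta, t}$. The symmetry $m_2 = m_3$ forces $\alpha_2 = \alpha_3$ and, by the same argument as in~\eqref{eq:sce:mixed_partial}, yields $\partial^2 G/\partial x\,\partial y = 0$ at $m$, so the reduced $(x,y)$-Hessian is diagonal. Following the singularity-theoretic characterization of the beak-to-beak scenario used in \cite{KuMe20} for the static model, a B2B point on the symmetry axis is an umbilic-type degeneracy at which both diagonal entries vanish: $\partial^2 G/\partial x^2 = 0$ and $\partial^2 G/\partial y^2 = 0$. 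Specializing the formulas of Proposition~\ref{prop:sing_summary} to $m=(0,y,0)$ (so that $\Gamma_{2,1} = e^{3y}/(e^{3y}+e^{g_t}+1)$, $\Gamma_{2,2}=e^{g_t}/(e^{3y}+e^{g_t}+1)$, $\Gamma_{2,3}=1/(e^{3y}+e^{g_t}+1)$ together with their $\rho$-symmetric counterparts), expressing $\alpha_1, \alpha_2$ via $\chi$ and clearing denominators by multiplication with $(e^{3y}+e^{g_t}+1)\,e^{3y}$, the two vanishing conditions reduce to the algebraic identities~\eqref{eq:b2b:eq1} and~\eqref{eq:b2b:eq2}.

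For item~(b), I exploit the crucial observation that both \eqref{eq:b2b:eq1} and \eqref{eq:b2b:eq2} are \emph{linear} in $\beta$ for fixed $s = 3y$ and $w = e^{g_t}+1$. Solving \eqref{eq:b2b:eq2} directly for $\beta$ yields the rational expression~\eqref{eq:b2b:beta}, provided $(w-1)e^{2s} - w \ne 0$. Substituting this $\beta$ into \eqref{eq:b2b:eq1} and clearing the remaining exponential denominators produces a polynomial relation in $w$; expanding and collecting terms yields precisely the cubic~\eqref{eq:b2b:cubic}. To establish existence and uniqueness of a root $w_\ast(s) \in (2, \infty)$ for $s > s_\ast$, I evaluate the cubic at the boundary $w = 2$ (i.e.\ $g_t = 0$): up to a positive factor, this evaluation equals~\eqref{eq:b2b:domain_def}. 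Since the leading coefficient $e^{3s} - e^s$ is strictly positive for $s > 0$, the cubic tends to $+\infty$ with $w$; a sign analysis at $w=2$ combined with the intermediate value theorem and a derivative computation then produce the unique root, and the critical threshold $s_\ast$ is characterized as the unique zero in $(0, \infty)$ of~\eqref{eq:b2b:domain_def} at which the sign at $w = 2$ flips.

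For item~(c), it remains to verify that $s = 2/3$ lies in the admissible range $(s_\ast, \infty)$ and gives $\beta = 8/3$ with the stated value of $g_t$. Substituting $s = 2/3$ into~\eqref{eq:b2b:cubic} and locating the root numerically yields $w_\ast(2/3) - 1 \approx e^{0.026481}$; substituting this value back into~\eqref{eq:b2b:beta} gives $\beta = 8/3$, which can alternatively be verified directly by checking that the triple $(\beta, s, g_t) = (8/3, 2/3, 0.026481\ldots)$ solves the simultaneous system \eqref{eq:b2b:eq1}\,--\,\eqref{eq:b2b:eq2}. The inequality $s_\ast < 2/3$ follows from evaluating~\eqref{eq:b2b:domain_def} at $s = 2/3$ and comparing signs with its value at $s_\ast$.

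The main obstacle is step~(a): one must identify precisely which pair of second-order algebraic conditions characterizes a beak-to-beak point \emph{on} the symmetry axis for the dynamical model, as distinguished from the symmetric cusp scenario of Proposition~\ref{prop:sce}. Once this characterization is secured, the remainder of the argument consists of elimination of $\beta$, a cubic analysis, and a sign count — all routine computations best handled by symbolic algebra and the numerics discussed on page~\pageref{source_code_info}.
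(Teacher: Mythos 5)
The decisive step is your item~(a), and the characterization you commit to there is wrong. You impose the two conditions \(\pdv[2]{G}{x}=0\) and \(\pdv[2]{G}{y}=0\), i.e.\ a double degeneracy of the reduced Hessian. That is an umbilic-type condition --- it is exactly what happens at the simplex \emph{center} on the EU line, where full symmetry forces the two diagonal entries to coincide --- whereas the beak-to-beak point is a \emph{cusp} of the catastrophe map sitting on a symmetry half-axis with \(y>0\), at which two horns of neighbouring pentagrams merge. The paper's derivation restricts the catastrophe map to the symmetry axis, sets \(v(y)=(\varphi_\beta)_2\circ\chi(\varphi_\beta^{-1}(0,y,0),\beta,t)\), and demands
\begin{equation*}
  \dv{v}{y}=0, \qquad \dv[2]{v}{y}=0,
\end{equation*}
i.e.\ a local minimum and a local maximum of \(y\mapsto v(y)\) collide (this is the content of Figure~\ref{fig:b2b:alpha1_vs_y}); the numerators of \eqref{eq:b2b:alpha1_vs_y:y} and \eqref{eq:b2b:alpha1_vs_y:yy} are what produce \eqref{eq:b2b:eq1} and \eqref{eq:b2b:eq2}. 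Only the first of these is equivalent to \(\pdv[2]{G}{y}=0\) at \((m,\chi(m,\beta,t))\); the second is a \emph{third-order} condition in the \(y\)-direction, not \(\pdv[2]{G}{x}=0\). You can see this directly from the equations themselves: the left-hand side of \eqref{eq:b2b:eq2} is (one third of) the \(y\)-derivative of the left-hand side of \eqref{eq:b2b:eq1}, the unmistakable fingerprint of a first-derivative/second-derivative pair; two independent Hessian entries would never be related this way, and a computation of \(\pdv[2]{G}{x}\) with \(\alpha=\chi(m,\beta,t)\) inserted gives an expression containing \(6/\beta\) that is not affine in \(\beta\), unlike \eqref{eq:b2b:eq2}. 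So your step~(a) would produce a different second equation and a different line (a locus of off-center umbilics, not B2B), and everything downstream would change.

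Your items~(b) and~(c) do follow the paper's route: \eqref{eq:b2b:eq2} is affine in \(\beta\), solving it gives \eqref{eq:b2b:beta}, substitution into \eqref{eq:b2b:eq1} yields the cubic \eqref{eq:b2b:cubic}, and \eqref{eq:b2b:domain_def} is precisely the value of the cubic at \(w=2\). Two loose ends remain even there: the paper controls the number of roots in \((2,\infty)\) by Descartes' rule of signs applied to the polynomial shifted by \(\theta=w-2\) (which also shows there is \emph{no} such root for \(0<s<s_\ast\), something your IVT argument alone does not exclude, since the value at \(w=2\) being positive is compatible with an even number of roots beyond \(2\)); and one must explicitly rule out the degenerate case \((w-1)e^{2s}=w\) before solving \eqref{eq:b2b:eq2} for \(\beta\), which the paper does by deriving a contradiction with \(g_t>0\). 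These are repairable, but the proof stands or falls with the characterization in~(a).
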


\begin{proof}
  From the analysis of the static model \parencite[see][Figure~2,
  rightmost plot of the first row and neighbouring plots for smaller
  or larger \(\beta\)]{KuMe20} we know that the
  beak-to-beak point \((\alpha_\ast, \beta_\ast, t_\ast)\) is such
  that if we fix \(\alpha = \alpha_\ast\) but change the parameters
  \(\beta\) or \(t\) we either find that \(\alpha = \alpha_\ast\) is
  contained in a cell with two minimizers or in a cell with one
  minimizer.  Since \(\alpha_\ast\) lies on the axis of symmetry, we
  know \(\alpha_\ast = \chi(m_\ast, \beta_\ast, t_\ast)\) where
  \(m_\ast\) lies on the axis of symmetry as well, and we find in
  coordinates
  \(\varphi_{\beta}(\alpha_\ast) = (0, v(m_\ast, \beta_\ast, t_\ast),
  0)\), so it suffices to study
  \begin{equation}
    \begin{split}
      v(m, \beta, t) &= (\varphi_{\beta})_2 \circ \chi(m, \beta, t) =
      \\
      &\frac{(\beta + 6y)w e^{-3y} - (\beta - 3y)(w-1)e^{3y}+3(w^2 - w
        + 2)y - \beta}{3(w^2 - w - 2)}
    \end{split}
  \end{equation}
  as a function of the \(y\)-coordinate of \(m\).  As before
  substitute \(w = e^{g_t} + 1\).  In Figure~\ref{fig:b2b:alpha1_vs_y}
  you see a minimum and a maximum collide and form a saddle point.
  This is exactly the beak-to-beak behavior.  The point \((\beta, t)\)
  for which this collision has just happened is given by the vanishing
  of the first and second derivatives of \(v(m, \beta, t)\) with
  respect to the \(y\)-coordinate of \(m\).  Now, the derivatives are
  given by:
  \begin{align}
    \label{eq:b2b:alpha1_vs_y:y}
    \dv{v}{y}\qty(m, \beta, t) &= \frac{
    -(\beta + 6y - 2)we^{-3y} - (\beta - 3y - 1)(w-1)e^{3y} + w^2 - w + 2
    }{
    w^2 - w - 2
    }
    \\
    \label{eq:b2b:alpha1_vs_y:yy}
    \dv[2]{v}{y}\qty(m, \beta, t) &= \frac{
    3(\beta + 6y - 4)we^{-3y} - 3(\beta - 3y - 2)(w-1)e^{3y}
    }{
    w^2 - w - 2
    }
  \end{align}
  Since \(w > 2\), it suffices to consider the numerators of the above
  expressions.  This yields equations~\eqref{eq:b2b:eq1} and
  \eqref{eq:b2b:eq2}.

  Let us now prove the parametric form of the solutions.
  Equation~\eqref{eq:b2b:eq2} is linear in \(\beta\) as long as
  \(e^{g_t} + 1 - e^{g_t + 6y} \neq 0\) and can then be solved for
  \(\beta\) to yield \eqref{eq:b2b:beta} after substituting
  \(w = e^{g_t} + 1\) and \(s = 3y\).  Suppose now
  \(e^{g_t} + 1 - e^{g_t + 6y} = 0\) which is equivalent to
  \(e^{g_t} = \frac{1}{e^{6y} - 1}\).  Equation~\eqref{eq:b2b:eq2}
  would in this case read
  \begin{equation}
    \frac{(9y - 2)e^{3y}}{e^{6y} - 1} = 0
  \end{equation}
  which is only fulfilled for \(y = \frac{2}{9}\).  However, this
  leads to the contradiction
  \(e^{g_t} = \frac{1}{e^{\frac{4}{3}} - 1} < 1\) but \(g_t > 0\).
  Therefore, we can assume that we can solve \eqref{eq:b2b:eq2} for
  \(\beta\).  Plugging this into equation~\eqref{eq:b2b:eq1} we arrive
  at the following fraction of polynomials in \(w\).
  \begin{equation}
    \frac{
      \begin{split}
      (e^{3s} - e^s) \, w^3 &- (6s e^{2s} + e^{4s} + 2e^{3s} -
      3e^{2s} - e^s - 2) \, w^2
      \\
      &+(6se^{2s} + 2e^{4s}+3e^{3s} - 3e^{2s} -2e^s) \, w - e^{4s} - 2e^{3s}
      \end{split}
    }{
      e^s\big((w - 1)e^{2s} - w\big)
    } = 0.
  \end{equation}
  The denominator is not zero because we are able to solve for
  \(\beta\). Thus, it suffices to consider the numerator which yields
  Formula~\eqref{eq:b2b:cubic}.

  We will now discuss the roots larger than 2 of this cubic
  polynomial.  It is convenient to change variables
  \(\theta = w - 2\), so that we are interested in the positive roots
  of the following polynomial:
  \begin{equation}
    \begin{split}
    \theta^3(e^{3s} - e^s)
    &-(6se^{2s} + e^{4s} - 4e^{3s} - 3e^{2s} + 5e^{s} - 2)\theta^2
    \\
    &- (18se^{2s} + 2e^{4s} -7e^{3s} - 9e^{2s} +10e^s - 8)\theta
    \\
    &- 12se^{2s} - e^{4s} + 4e^{3s} + 6e^{2s} - 8e^s + 8
    \end{split}
  \end{equation}
  Using Descartes' rule of signs, we know that the number of positive
  roots is equal to the number of sign changes among consecutive,
  nonzero coefficients of the polynomial or it less than it by an even
  number.  Note that the coefficients in increasing order for \(s = 0\)
  are given by \((9, 12, 3, 0)\).  Therefore we do not find any
  positive roots for very low positive values of \(s\).  The first
  sign changes appears for the coefficient of order zero which yields
  equation~\eqref{eq:b2b:domain_def}.  All of the coefficients except
  the highest order coefficient eventually become negative.  However,
  with increasing \(s\) this happens with increasing order of the
  coefficient so that we have only one sign change between consecutive
  coefficients for each \(s\) larger than \(s_\ast\).  Thus, for all
  \(s > s_\ast\) there exists only one root \(w_\ast(s)\) larger than
  2.
  \begin{figure}
    \centering
    \includegraphics{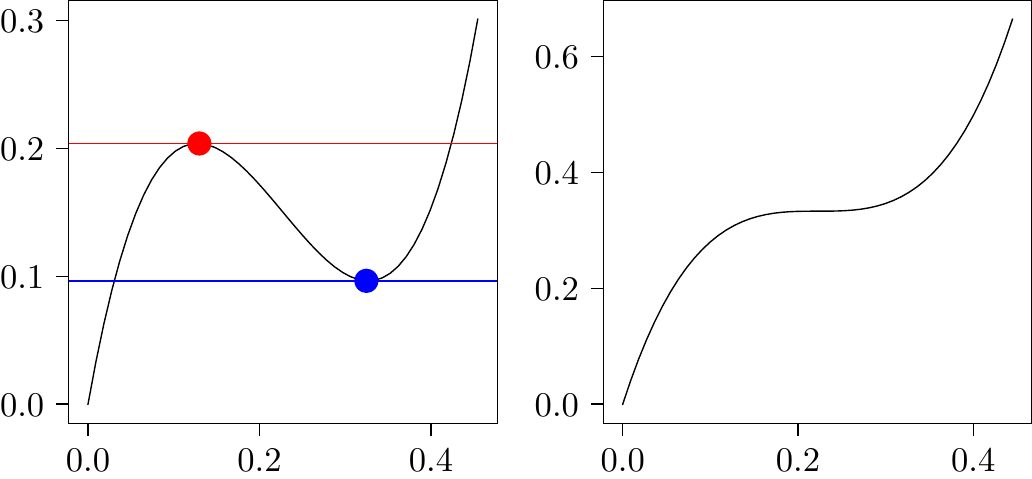}
    \caption{This figure shows how \(v(m, \beta, t)\) behaves as a
      function of the \(y\)-coordinate of \(m\) for
      \(g_t \approx 0.07012\).  In the left plot
      (\(\beta \approx 2.6685\)) you see that there is a region for
      \(v(m, \beta, t)\) such that there exist three solutions to the
      equation \(v(m, \beta, t) = v_0\).  In the right plot
      (\(\beta \approx 2.7267\)) this region is gone.  For any \(v_0\)
      in this region, we find three zeros of the partial derivative of
      the potential with respect to the \(y\)-coordinate of \(m\)
      corresponding to two local minimizers and a saddle point.  The
      red and blue dots correspond to the same dots in the central
      plot of figure~\ref{fig:b2b:slices}}
\label{fig:b2b:alpha1_vs_y}
  \end{figure}
\end{proof}

\subsection{Reentry into Gibbs: the Maxwell triangle exit (MTE) line}

For \(\beta\) in the interval \((\frac{8}{3}, 4\log 2)\) the model
displays recovery as well but due to a different mechanism.  After the
horns of two pentagrams have touched, the Maxwell set which consisted
of three connected components now has become one connected component.
It consists of three straight lines on the axes of symmetry and a
triangle with curved edges.  The model recovers from the
non-Gibbsianness when this triangle completely leaves the unit simplex
which happens on another line in the dynamical phase diagram we call
\emph{Maxwell triangle exit (MTE)}.

\begin{proposition}
  For any \(\beta\) in the interval \((\frac{8}{3}, 4\log 2)\) define
  the function
  \begin{equation}
    \label{eq:mte:w}
    w(\beta, y) = 1 + \frac{(\beta + 6y)e^{-3y}}{\beta - 3y}.
  \end{equation}
  The Maxwell triangle leaves the simplex at
  \(t = t_{\mathrm{MTE}}(\beta) = \frac{1}{3}\log \frac{w(\beta, y) +
    1}{w(\beta, y) - 2}\) where \(y\) in
  \((-\frac{\beta}{6}, \frac{\beta}{3})\) is such that there exists a
  \(y'\) in \((-\frac{\beta}{6}, \frac{\beta}{3})\) and \((y, y')\) is
  a solution of the system
  \begin{align}
    \label{eq:mte:eq1}
    (\beta + 6y)(\beta - 3y')e^{-3y} - (\beta + 6y')(\beta - 3y)e^{-3y'} &= 0
    \\
    \label{eq:mte:eq2}
    -2y - {y^\prime} - \frac{3}{\beta} \Big((y^\prime)^2 - y^2\Big) + \log \frac{\beta}{3}\left(-2 \, {\left(\beta - 3 \, y\right)} e^{3 \, y} - {\left(\beta + 6 \, y\right)} e^{3 \, {y^\prime}}\right) &= 0
  \end{align}
\end{proposition}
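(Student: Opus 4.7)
The plan is to identify the MTE condition as a two-fold Maxwell coexistence of two symmetric stationary points of $G_{\alpha, \beta, t}$ whose common image under $\chi(\cdot, \beta, t)$ is the simplex vertex $\alpha = (1, 0, 0)$, and then to extract the system \eqref{eq:mte:eq1}--\eqref{eq:mte:eq2} from the stationarity and equal-depth conditions together with the defining relation \eqref{eq:mte:w}.

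First, by the $S_3$ symmetry of the model it suffices to analyze one of the three Maxwell arcs, namely the one whose last exit from the unit simplex takes place through the vertex $\alpha = (1, 0, 0)$. The residual $2 \leftrightarrow 3$ reflection fixing the axis through this vertex forces the two coexisting global minimizers of $G_{(1,0,0), \beta, t}$ at the exit moment to lie on the axis of symmetry in $m$-space, that is, $\varphi_\beta(m) = (0, y, 0)$ and $\varphi_\beta(m') = (0, y', 0)$ with $y \neq y'$.

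Second, impose the condition $\chi((0, y, 0), \beta, t) = (1, 0, 0)$. From $m = \Gamma^T\alpha$ with $\alpha = (1, 0, 0)$ this reduces to $m_1 = \Gamma_{1,1}(\beta m, t)$, i.e.\
\[
\tfrac{1}{3} + \tfrac{2y}{\beta} \;=\; \frac{e^{3y + g_t}}{e^{3y + g_t} + 2},
\]
which upon solving for $e^{g_t}$ yields
\[
e^{g_t} \;=\; \frac{(\beta + 6y)\, e^{-3y}}{\beta - 3y},
\]
i.e.\ $w(\beta, y) = e^{g_t} + 1$. Inverting the relation \eqref{eq:def:g_t} between $g_t$ and $t$ then produces the claimed formula $t_{\mathrm{MTE}}(\beta) = \tfrac{1}{3}\log\tfrac{w(\beta, y)+1}{w(\beta, y)-2}$. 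Running the same argument with $y'$ in place of $y$ gives the identical formula with $w(\beta, y')$, so that consistency of both expressions for a single $t$ is equivalent to $w(\beta, y) = w(\beta, y')$, which after cross-multiplying is precisely equation \eqref{eq:mte:eq1}.

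Third, impose the equal-depth condition $G_{(1,0,0), \beta, t}((0, y, 0)) = G_{(1,0,0), \beta, t}((0, y', 0))$. A direct computation using $\|(0, y, 0)\|^2 = \tfrac{1}{3} + \tfrac{6 y^2}{\beta^2}$ gives the quadratic contribution $\tfrac{3}{\beta}(y^2 - (y')^2)$ that appears in \eqref{eq:mte:eq2}, while on the stationarity manifold the log-partition function simplifies via $e^{3y + g_t} = (\beta + 6y)/(\beta - 3y)$ to a purely algebraic expression in $y, y', \beta$. Rearranging the resulting transcendental equation and using \eqref{eq:mte:eq1} to eliminate one of the exponentials produces equation \eqref{eq:mte:eq2}.

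The main technical obstacle, once the system is in place, is that \eqref{eq:mte:eq1}--\eqref{eq:mte:eq2} is transcendental and admits no closed-form solution, so that the correct branch $(y, y')$ with both components in $(-\tfrac{\beta}{6}, \tfrac{\beta}{3})$ must be located numerically. However, the reduction to a two-dimensional problem, together with the global picture of the bifurcation set slices afforded by Proposition~\ref{prop:sing_summary} and the analyses of the preceding subsections (which pin down where the branch $(y, y')$ originates and how it evolves in $t$), lets us isolate the correct branch unambiguously throughout the regime $\beta \in (\tfrac{8}{3}, 4\log 2)$.
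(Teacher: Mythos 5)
Your proposal is correct and follows essentially the same route as the paper: by symmetry both coexisting minimizers are placed on the axis through the vertex $\alpha=(1,0,0)$, the stationarity condition there is solved for $w=e^{g_t}+1$ to give \eqref{eq:mte:w} and the time formula, the consistency $w(\beta,y)=w(\beta,y')$ yields \eqref{eq:mte:eq1}, and the equal-depth condition with $t$ eliminated yields \eqref{eq:mte:eq2}. Your phrasing of the first step via $\chi((0,y,0),\beta,t)=(1,0,0)$ is just the catastrophe-map restatement of the paper's condition $\partial G_{(1,0,0),\beta,t}/\partial y=0$, so there is no substantive difference.
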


Before we come to the proof, let us remark the following: Of course,
it is impractical to solve this system by hand.  However, for fixed
\(\beta\) we can show the zeros of the left-hand sides of both
equations.  Figure~\ref{fig:mte:zeros} shows them in the relevant
rectangle
\((-\frac{\beta}{6}, \frac{\beta}{3}) \times (-\frac{\beta}{6},
\frac{\beta}{3})\).  The line as depicted in the dynamical phase
diagram is obtained via a numerical solution of this system of
equations.

\begin{figure}
  \centering \includegraphics{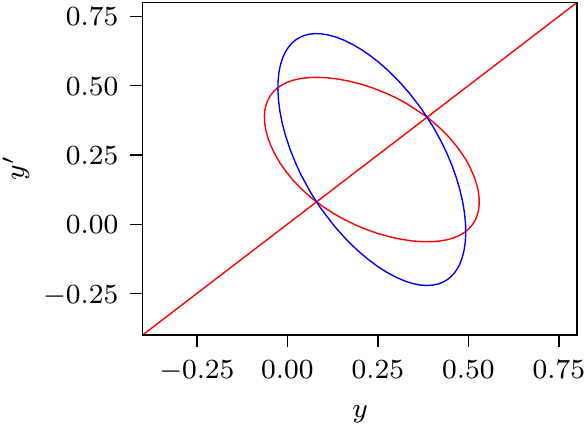}
  \caption{The zeros of the left-hand sides of the two
    equations~\eqref{eq:mte:eq1} and \eqref{eq:mte:eq2} for
    \(\beta = 2.8\).  The red curve corresponds to the solutions of
    \eqref{eq:mte:eq1} and the blue curve to the solutions of
    \eqref{eq:mte:eq2}.  The intersection of the red curve with
    diagonal is of course a trivial solution and not the one we are
    looking for.}
\label{fig:mte:zeros}
\end{figure}

\begin{proof}
  Let \(m = \varphi_{\beta}^{-1}(0, y, 0)\) be any point on the axis
  of symmetry with \(m_2 = m_3\).  This point is mapped to
  \(\alpha = (1, 0, 0)\) by the catastrophe map
  \(\chi(\cdot, \beta, t)\) if and only if
  \begin{equation}
    \frac{6y}{\beta} + 1 - \frac{3(w-1)e^{3y}}{(w-1)e^{3y} + 2} = 0
  \end{equation}
  which is the equation \(\pdv{G_{\alpha, \beta, t}}{y} = 0\) where
  \(\alpha = (1, 0, 0)\) and we have substituted \(w = e^{g_t} + 1\).
  Solving this equation for \(w\) we find two solutions one of which
  is positive. This yields \eqref{eq:mte:w}.

  Let \(m' = \varphi_{\beta}^{-1}(0, y', 0)\) be any point on the same
  axis of symmetry.  The value of \(G_{\alpha, \beta, t}\) at these
  two points \(m\) and \(m'\) are equal if and only if
  \(G_{\alpha, \beta, t} - G_{\alpha, \beta, t} = 0\).  Plugging in
  \(t = \frac{1}{3}\log\frac{w(\beta, y) + 1}{w(\beta, y) - 2}\) and
  \(\alpha = (1, 0, 0)\) yields \eqref{eq:mte:eq2}.
  Equation~\eqref{eq:mte:eq1} comes from the fact that \(m\) and
  \(m'\) are stationary points that belong to the same time variable
  \(t\), that is, \(w(\beta, y) - w(\beta, y') = 0\).  If we multiply
  this equation by \((\beta - 3y)(\beta - 3y')\) we arrive at
  \eqref{eq:mte:eq1}.
\end{proof}

\section{Loss of the Gibbs property without recovery}
\label{sec:loss_only}

If \(\beta\) lies in the interval \((4\log 2, 3)\), the model displays
the loss of the Gibbs property without recovery.  This is due to the
uniform distribution which becomes bad after a sharp transition time
and stays bad forever.  This behavior is analogous to the behavior in
the static model described by the Ellis-Wang theorem
\parencite{ElWa90}.

\subsection{The Ellis-Wang (EW) line}

The static model has a phase-coexistence of four states at inverse
temperature~\(4\log 2\) in zero field \parencite{ElWa90}.
The first layer model as discussed in this paper has a whole line of
such points which we refer to as Ellis-Wang points.

\begin{proposition}
  Suppose \(\alpha = (\frac{1}{3}, \frac{1}{3}, \frac{1}{3})\), that
  is, it represents the uniform distribution.
  \begin{enumerate}
  \item The HS transform \(G_{\alpha, \beta, t}\) has a point of
    phase-coexistence with four global minimizers if and only if there
    exists a solution \((s, \beta, t)\) to the following system of
    equations.
    \begin{align}
      \label{eq:ew:eq1}
      \frac{3y}{\beta} + \frac{1}{e^{3y + g_t} + 2} - \frac{e^{3y}}{e^{3y} + e^{g_t} + 1}&= 0
      \\
      \label{eq:ew:eq2}
      3y\qty(1 + \frac{3y}{\beta}) + \log\frac{
      (e^{g_t} + 2)^3
      }{
      (e^{g_t} + 1 + e^{3y})^2 (e^{3y + g_t} + 2)
      } &= 0
    \end{align}
  \item The solutions to the above systems can be parametrized in
    terms of \(s = 3y\) given via
    \begin{align}
      \label{eq:ew:beta}
      \beta &= \frac{
              s\big(e^s(w_\ast(s) - 1) + 2\big)(e^s + w_\ast(s))
              }{
              (e^s - 1)(w_\ast(s) e^s + w_\ast(s) - e^s)
              },
      \\
      g_t &= \log(w_\ast(s) - 1)
    \end{align}
    where \(s > 2\log 2\) and \(w_\ast(s)\) is the unique
    zero in \((2, \infty)\) of
    \begin{equation}
      \label{eq:ew:zero_problem}
      w \mapsto s\left( 1 + \frac{(e^s - 1)(we^s - e^s + w)}{(w+e^s)(we^s-e^s+2)}\right) + \log\frac{
        (w+1)^3}{
        (w+e^s)^2 (we^s - e^s + 2)
        }.
    \end{equation}
  \end{enumerate}
\end{proposition}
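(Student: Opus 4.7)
The plan proceeds in two parts mirroring the two items of the proposition.

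For item 1, I would exploit the fact that at $\alpha = (\tfrac{1}{3},\tfrac{1}{3},\tfrac{1}{3})$ the potential $G_{\alpha,\beta,t}$ inherits the full $S_3$ permutation symmetry, so the center $m_0 = (\tfrac{1}{3},\tfrac{1}{3},\tfrac{1}{3})$ is always a stationary point and any other minimizers come in $S_3$-orbits. A phase-coexistence of exactly four global minimizers must therefore consist of the center together with one $S_3$-orbit of size three, whose representative can be taken to lie on the axis of symmetry $m_2 = m_3$, i.e.\ as $m = \varphi_{\beta}^{-1}(0,y,0)$. For such points the $x$- and $z$-partial derivatives of $G_{\alpha,\beta,t}$ vanish automatically by Proposition~\ref{prop:sing_summary}(d) and by symmetry, so stationarity reduces to $\partial G_{\alpha,\beta,t}/\partial y = 0$. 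I would recycle the formula for this partial derivative derived in the proof of Proposition~\ref{prop:sce}, but now with the uniform $\alpha$, to obtain \eqref{eq:ew:eq1}. For the second condition, I would compute the equal-depth difference $G_{\alpha,\beta,t}(m) - G_{\alpha,\beta,t}(m_0)$ directly from \eqref{eq:hs_transform}; at the uniform $\alpha$ the outer sum over $b$ collapses nicely and the result simplifies to \eqref{eq:ew:eq2}.

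For item 2, I would introduce the substitutions $w = e^{g_t}+1$ and $s = 3y$. After clearing denominators, equation~\eqref{eq:ew:eq1} becomes linear in $\beta$ and can be solved to yield precisely \eqref{eq:ew:beta}. Plugging this expression for $\beta$ into \eqref{eq:ew:eq2} then eliminates $\beta$ and produces a single equation in the two variables $(s,w)$, which after routine algebraic simplification is exactly the zero problem \eqref{eq:ew:zero_problem}. This part is essentially mechanical, analogous to the derivations carried out for the SCE, BU, and B2B lines.

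The main obstacle, as I see it, is to establish existence and uniqueness of the root $w_\ast(s) \in (2,\infty)$ for every $s > 2\log 2$, together with its nonexistence for $s \le 2\log 2$. I would approach this by analyzing the boundary behavior of the function \eqref{eq:ew:zero_problem} as $w \to 2^+$ and as $w \to \infty$, and then establishing monotonicity in $w$ for fixed $s$ (e.g.\ via a signed decomposition of $\partial_w$, in the spirit of the Descartes-rule argument used for \eqref{eq:b2b:cubic}). The threshold value $s = 2\log 2$ should emerge as the unique $s$ at which the boundary value of the zero problem at $w = 2$ changes sign; this is precisely the limit $g_t \to 0$ (i.e.\ $t \to \infty$) in which the static Ellis-Wang temperature $\beta = 4\log 2$ is recovered, so the value $2\log 2 = \tfrac{1}{2}\cdot 4\log 2$ is the expected self-consistency temperature and should drop out of the asymptotic analysis in that limit. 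Once this monotonicity and boundary analysis is in hand, the parametrization \eqref{eq:ew:beta} follows immediately and the range $s > 2\log 2$ is sharp.
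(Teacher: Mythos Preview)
Your proposal is correct and follows essentially the same route as the paper: symmetry reduction to the axis $m_2=m_3$, the stationarity condition $\partial_y G=0$ yielding \eqref{eq:ew:eq1}, the equal-depth difference $G(m)-G(m_0)$ yielding \eqref{eq:ew:eq2}, then the substitutions $w=e^{g_t}+1$, $s=3y$, solving \eqref{eq:ew:eq1} for $\beta$ and inserting into \eqref{eq:ew:eq2}. The only minor divergence is in justifying uniqueness of $w_\ast(s)$ and the threshold $s=2\log 2$: the paper simply asserts that $w_\ast(s)$ is increasing in $s$ with $w_\ast(2\log 2)=2$, whereas you propose a monotonicity-in-$w$ plus boundary-value analysis; your plan is in fact more explicit than what the paper provides, and either route is adequate for this part.
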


\begin{proof}
  First, let us derive the system of
  equations~(\ref{eq:ew:eq1}--\ref{eq:ew:eq2}).  Since \(\alpha\) has
  the full symmetry, that is, it is invariant under any permutation of
  \(S_3\), it suffices to consider the equal-depth of the central
  minimum \(m_0\) with one of the three outer ones denoted by \(m\).
  In the following, we assume \(m_2 = m_3\).  The relative difference
  between the values is given by
  \begin{equation}
    \begin{split}
      G_{\alpha, \beta, t}(m) - G_{\alpha, \beta, t}(m_0) = y &+ \frac{3y^2}{\beta} - \frac{1}{3} \log(e^{g_t + 3y} + 2)
      \\
      &- \frac{2}{3} \log(e^{g_t} + e^{3y} + 1) + \log(e^{g_t} + 2).
    \end{split}
  \end{equation}
  By collecting the logarithmic terms and multiplying the equation by
  3 we find \eqref{eq:ew:eq2}.  Equation~\eqref{eq:ew:eq1} comes from
  the fact that \(m\) is a stationary point.  So we calculate the
  relevant partial derivative
  \begin{equation}
    \pdv{G_{\alpha, \beta, t}}{y} = \frac{6y}{\beta} + 1 - \Gamma_{1,1} - 2\Gamma_{2, 1}
  \end{equation}
  where \(\Gamma_{b, a} = \Gamma_{b, a}(\beta m, t)\).  The partial
  derivative with respect to the \(x\)-coordinate of \(m\) vanishes
  because of symmetry.  Plugging in the expressions for
  \(\Gamma_{1,1}\) and \(\Gamma_{2, 1}\) yields \eqref{eq:ew:eq1}.

  Now, let us come to the parametrization.
  Equation~\eqref{eq:ew:beta} follows by substituting
  \(w = e^{g_t} + 1\) and \(s = 3y\) in equation~\eqref{eq:ew:eq1} and
  solving for \(\beta\) which is possible since \(s \neq 0\).
  Plugging this into equation~\eqref{eq:ew:eq2} and making the same
  substitutions we find \eqref{eq:ew:zero_problem}.  Note that
  \(w_\ast(s)\) is increasing with \(s\) and that the solution of
  \(w_\ast(s) = 2\) is \(s = 2\log 2\).  For lower values of \(s\)
  \eqref{eq:ew:zero_problem} has no zeros larger than two.
\end{proof}

\subsection{The elliptic umbilics (EU) line}

In the static model there is a special point called elliptic umbilic.
This catastrophe at the center of the unit simplex is responsible for
the fact that the central minimum changes to a maximum.  In the
dynamical model\,--\,due to the additional parameter \(g_t\)\,--\,we
have a whole line of these points.  This line we call the line of
elliptic umbilics (EU).

\begin{proposition}
  For each \(\beta \ge 3\) define the function
  \begin{equation}
    \label{eu:w}
    w(\beta) = \beta - 1 + \sqrt{\beta(\beta - 3)}.
  \end{equation}
  Fix some \(\beta \ge 3\) and let
  \(\alpha = (\frac{1}{3}, \frac{1}{3}, \frac{1}{3})\) and
  \(t = \frac{1}{3} \log\frac{w(\beta) + 1}{w(\beta) - 2}\).  Then:
  \begin{enumerate}
  \item The Hessian \(G_{\alpha, \beta, t}''(m)\) at
    \(m = (\frac{1}{3}, \frac{1}{3}, \frac{1}{3})\) has a double zero
    eigenvalue.
  \item The Taylor expansion of \(G_{\alpha, \beta, t}\) at
    \(m = (\frac{1}{3}, \frac{1}{3}, \frac{1}{3})\) for \(\beta = 3\)
    (and therefore \(g_t = 0\)) up to the third order is given by
    \begin{equation}
      \label{eq:eu:expansion}
      x^2y - \frac{1}{3}y^3 + \frac{1}{2} z^2 - \log 3 - \frac{1}{2}.
    \end{equation}
  \end{enumerate}
\end{proposition}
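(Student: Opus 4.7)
The plan is to handle the two items separately, both made tractable by the full $S_3$-invariance of the uniform distribution.

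For part 1, Proposition~\ref{prop:sing_summary}(d) tells us that in the $(x,y,z)$-chart the Hessian at $m_0 = (1/3,1/3,1/3)$ is block diagonal with a positive $z$-eigenvalue $3/\beta$, so a double zero eigenvalue must come entirely from the $2\times 2$ block in the $(x,y)$-plane. At the symmetric point the matrix $\Gamma(\beta m_0, t)$ has diagonal entries $p = e^{g_t}/(e^{g_t}+2)$ and off-diagonal entries $q = 1/(e^{g_t}+2)$, so the Hessian in $m$-coordinates is a circulant matrix whose two distinct eigenvalues are $A+2B$ on the axis $(1,1,1)$ (which maps to the $z$-direction) and $A - B$, doubly degenerate, on the orthogonal plane (the $(x,y)$-block). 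A short simplification using $p+2q = 1$ reduces the degeneracy condition $A - B = 0$ to $(p - q)^2 = 1 - 3/\beta$. With $p - q = (w - 2)/(w + 1)$ and $w = e^{g_t} + 1$, this becomes the quadratic $(w + 1)^2 = \beta(2w - 1)$, whose unique root exceeding $2$ is $w(\beta) = \beta - 1 + \sqrt{\beta(\beta - 3)}$, matching~\eqref{eu:w}. Finally, the definition~\eqref{eq:def:g_t} rearranges into $e^{-3t} = (w - 2)/(w + 1)$, producing the stated time $t = \tfrac{1}{3}\log\frac{w+1}{w-2}$.

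For part 2, at $\beta = 3$ and $g_t = 0$ the log-sum-exp term collapses to $\log\sum_a e^{3 m_a}$ (since $\sum_b \alpha_b = 1$), so I would Taylor-expand
\[
G(m) = \tfrac{3}{2}\|m\|^2 - \log\sum_{a=1}^3 e^{3 m_a}
\]
around $m_0$ through order three in $(x,y,z)$. Inverting $\varphi_3$ expresses $u_a := 3(m_a - 1/3)$ as linear forms in the new coordinates, with power-sums $\sum u_a = 3z$, $\sum u_a^2 = 6x^2 + 6y^2 + 3z^2$, and an analogous (computable) cubic sum. The orthogonality of the rows of $\mathrm{d}\varphi_3$ then gives $\tfrac{3}{2}\|m\|^2 = \tfrac{1}{2} + z + x^2 + y^2 + \tfrac{1}{2}z^2$, while $\log\bigl(\tfrac{1}{3}\sum_a e^{u_a}\bigr)$, expanded via $\log(1 + v) = v - v^2/2 + v^3/3 + \cdots$ and collected by total degree, contributes $z + (x^2 + y^2) + (-x^2 y + \tfrac{1}{3} y^3)$ through order three. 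Subtracting yields exactly the right-hand side of~\eqref{eq:eu:expansion}.

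The main obstacle is the cubic-order cancellation in part 2: one has to feed the complete cubic expansion of $\tfrac{1}{3}\sum e^{u_a}$ into $\log(1+v)$ and verify that the $z^3$, $zy^2$, and $x^2 z$ contributions all vanish (the identity $\tfrac{1}{6} - \tfrac{1}{2} + \tfrac{1}{3} = 0$ is what makes this work) while $-x^2 y + \tfrac{1}{3}y^3$ survives. Part 1, while conceptually the more important of the two claims since it identifies the line as one of elliptic umbilic singularities, reduces essentially to a symmetry-driven quadratic in $w$.
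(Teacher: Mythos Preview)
Your proof is correct and follows essentially the same route as the paper: for part~(a) both arguments compute the Hessian at the simplex center and arrive at the same quadratic $w^2 + 2(1-\beta)w + 1 + \beta = 0$ (you via the circulant eigenvalue $A-B$ and the identity $(p-q)^2 = 1 - 3/\beta$, the paper via direct evaluation of $\partial^2 G/\partial y^2$), and for part~(b) both perform the third-order Taylor expansion of $\tfrac{3}{2}\|m\|^2 - \log\sum_a e^{3m_a}$, your power-sum bookkeeping being a tidy reorganization of what the paper simply calls an ``elementary computation''. Your additional remark inverting~\eqref{eq:def:g_t} to recover $t = \tfrac{1}{3}\log\frac{w+1}{w-2}$ is a nice explicit check that the paper leaves implicit.
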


\begin{proof}
  First, we check that the Hessian has a double zero eigenvalue.  Let
  \(\alpha\) equal \((\frac{1}{3}, \frac{1}{3}, \frac{1}{3})\) and
  consider the Hessian of \(G_{\alpha, \beta, t}\) at
  \(m = (\frac{1}{3}, \frac{1}{3}, \frac{1}{3})\).  With the same
  arguments as in the proof of Proposition~\ref{prop:sce}, we find
  that the Hessian is diagonal.  Furthermore, since \(\alpha\) and
  \(m\) have the full symmetry, the two second order partial
  derivatives \(\pdv[2]{G_{\alpha, \beta, t}}{y}\) and
  \(\pdv[2]{G_{\alpha, \beta, t}}{x}\) are equal.  Let us consider the
  partial derivative with respect to \(y\).
  \begin{equation}
    \begin{split}
      \pdv[2]{G_{\alpha, \beta, t}}{y}
      &= \frac{6}{\beta}
      - 3\qty(\Gamma_{1,1} - \Gamma_{1,1}^2
      + 2(\Gamma_{2, 1} - \Gamma_{2, 1}^2))
      \\
      &= \frac{6}{\beta} - 3\qty(\frac{e^{g_t}}{e^{g_t} + 2}
      -\frac{e^{2g_t}}{(e^{g_t} + 2)^2}
      + \frac{2}{e^{g_t} + 2} - \frac{2}{(e^{g_t} + 2)^2}
      )
      \\
      &= \frac{6}{\beta} - 3\qty(1
      -\frac{(w-1)^2 + 2}{(w + 1)^2}
      )
      \\
      &= 6\frac{w^2 + 2(1 - \beta)w + 1 + \beta}{\beta(w + 1)^2}
      \\
    \end{split}
  \end{equation}
  where \(\Gamma_{b, a} = \Gamma_{b, a}(\beta m, t)\) and we have
  substituted \(w = e^{g_t} + 1\).  Setting this equal to zero and
  solving for \(w\) yields \eqref{eu:w} since the other root of the
  quadratic polynomial in the numerator is always less than two.

  Now we come to (b).  Plugging \(\beta = 3\) and \(g_t = 0\) into the
  HS transform and writing it in the \((x, y, z)\)-coordinates we
  arrive at
  \begin{equation}
    \label{eq:eu:hs}
    \begin{split}
      G_{\alpha, \beta, t}(m) = \frac{3}{2} \langle m, m\rangle - \log
      \sum_{a=1}^3 e^{3m_a} = x^2 + y^2 + \frac{1}{2}z^2 + \sqrt{3}x
      \\
      + y - \frac{1}{2} - \log(1 + e^{2\sqrt{3}x} + e^{\sqrt{3}x +
        3y}).
    \end{split}
  \end{equation}
  Using the Taylor expansion of the logarithm and the exponential
  function, \eqref{eq:eu:expansion} follows by an elementary
  computation.  Note that \eqref{eq:eu:hs} is actually the HS
  transform of the static Potts model.
\end{proof}

Using symbolic computation with the help of a computer, it is also
possible to obtain a Taylor expansion for every pair \((\beta, g_t)\)
on the Elliptic umbilic line.  Because of symmetry, the
\(\beta\)-dependent coefficients of \(x^2y\) and \(y^3\) differ only
by a factor of \(-\frac{1}{3}\).  This means that for any
\((\beta, g_t)\) on the Elliptic umbilic line the potential
\(G_{\alpha, \beta, t}\) with \(\alpha\) representing the uniform
distribution has the following Taylor expansion up to order three
around the simplex center.
\begin{equation}
  \frac{A_1(\beta)}{A_2(\beta)}\qty(x^2y - \frac{1}{3} y^3) + \frac{3}{2\beta} z^2
  -\frac{1}{6}\beta - \log(\beta + \sqrt{\beta(\beta - 3)})
\end{equation}
The functions \(A_1(\beta), A_2(\beta)\) are given as follows:
\begin{equation}
  \begin{split}
    A_1(\beta) = {}&7077888 \, \beta^{10} - 107937792 \, \beta^{9} +
    700710912 \, \beta^{8} - 2523156480 \, \beta^{7}\\
    &+ 5502422016 \, \beta^{6} - 7445737728 \, \beta^{5} + 6152433408
    \, \beta^{4}\\
    &- 2930719968 \, \beta^{3} + 712130940 \, \beta^{2} - 67493007 \, \beta + 1062882\\
    &+ 27 \, B(\beta) \sqrt{\beta(\beta - 3)}
    \end{split}
\end{equation}

\begin{equation}
  \begin{split}
    B(\beta) = {}&262144 \, \beta^{9} - 3604480 \, \beta^{8} + 20840448
    \, \beta^{7} - 65802240 \, \beta^{6}\\
    &+ 123282432 \, \beta^{5} - 139366656 \, \beta^{4} + 92378880 \,
    \beta^{3} - 33102432 \, \beta^{2}\\
    &+ 5380020 \, \beta - 255879
  \end{split}
\end{equation}

\begin{equation}
  \begin{split}
    A_2(\beta) = {}&1048576 \, \beta^{12} - 16515072 \, \beta^{11} +
    111476736 \, \beta^{10} - 421134336 \, \beta^{9}\\
    &+ 975421440 \, \beta^{8} - 1426553856 \, \beta^{7} + 1307674368
    \, \beta^{6}\\
    &- 720555264 \, \beta^{5} + 218245104 \, \beta^{4} - 30311820 \,
    \beta^{3}\\
    &+ 1240029 \, \beta^{2} + C(\beta) \sqrt{\beta(\beta - 3)}
  \end{split}
\end{equation}

\begin{equation}
  \begin{split}
    C(\beta) = {}&1048576 \, \beta^{11} - 14942208 \, \beta^{10} +
    90243072 \, \beta^{9} - 300810240 \, \beta^{8}\\
    &+ 603832320 \, \beta^{7} - 747242496 \, \beta^{6} + 560431872 \,
    \beta^{5} - 240185088 \, \beta^{4}\\
    &+ 51963120 \, \beta^{3} - 4330260 \, \beta^{2} + 59049 \, \beta
  \end{split}
\end{equation}

\paragraph{Acknowledgements} Daniel Meißner has been supported by the
German Research Foundation (DFG) via Research Training Group 2131
\emph{High dimensional Phenomena in Probability -- Fluctuations and
  Discontinuity}.  We also acknowledge the work of RUB IT-Services who
provide the applications
server\footnote{\url{https://www.it-services.ruhr-uni-bochum.de/services/issi/applikationserver_apps.html}}
that we used for our numerical results.

\label{source_code_info}
\paragraph{Source code for symbolical computation}
As we have already mentioned, since the expressions showing up in the
characterizations of the transition lines are long, we use the
SageMath package for our symbolic computation.  In fact the
expressions are so long that evaluating those expressions using the
Sage interpreter is very time-consuming.  Therefore, we use the code
generation facilities of the \texttt{sympy} library to generate C code
so that we can do the numerical computation in C.  We have included
the file \texttt{potts-numerics-1.0.tar.gz} in the Electronic
Supplemental Material (ESM) which contains the code and further
information on how to use it.  With this package you can, for example,
create high-resolution plots of the functions involved in the
computation of the lines BU, TPE and ACE, or generate bifurcation set
slices and Maxwell set slices.

\printbibliography
\end{document}